\DeclareMathOperator{\Tr}{Tr}
\DeclareMathOperator{\tr}{tr}
\DeclareMathOperator{\Vol}{Vol}
\DeclareMathOperator{\dvol}{dvol}
\DeclareMathOperator{\Ric}{Ric}
\DeclareMathOperator{\Rm}{Rm}
\DeclareMathOperator{\End}{End}
\DeclareMathOperator{\Int}{Int}
\newcommand{\oJ}{\overline{J}}
\newcommand{\oP}{\overline{P}}
\newcommand{\od}{\overline{d}}
\newcommand{\of}{\overline{f}}
\newcommand{\og}{\overline{g}}
\newcommand{\ow}{\overline{w}}
\newcommand{\odelta}{\overline{\delta}}
\newcommand{\oDelta}{\overline{\Delta}}
\newcommand{\onabla}{\overline{\nabla}}
\newcommand{\ch}{\widetilde{h}}
\newcommand{\cf}{\widetilde{f}}
\newcommand{\cphi}{\widetilde{\phi}}
\newcommand{\cpsi}{\widetilde{\psi}}
\newcommand{\hf}{\widehat{f}}
\newcommand{\hg}{\widehat{g}}
\newcommand{\hh}{\widehat{h}}
\newcommand{\hr}{\widehat{r}}
\newcommand{\hB}{\widehat{B}}
\newcommand{\hC}{\widehat{C}}
\newcommand{\hH}{\widehat{H}}
\newcommand{\hP}{\widehat{P}}
\newcommand{\hT}{\widehat{T}}
\newcommand{\hmB}{\widehat{\mathcal{B}}}
\newcommand{\hnabla}{\widehat{\nabla}}
\newcommand{\heta}{\widehat{\eta}}
\newcommand{\hphi}{\widehat{\phi}}
\newcommand{\hpsi}{\widehat{\psi}}
\newcommand{\lp}{\langle}
\newcommand{\rp}{\rangle}
\newcommand{\lv}{\lvert}
\newcommand{\rv}{\rvert}
\newcommand{\lV}{\lVert}
\newcommand{\rV}{\rVert}
\newcommand{\lots}{\mathrm{l.o.t.}}
\newcommand{\coronal}{coronal}
\newcommand{\mB}{\mathcal{B}}
\newcommand{\mC}{\mathcal{C}}
\newcommand{\mE}{\mathcal{E}}
\newcommand{\mF}{\mathcal{F}}
\newcommand{\mP}{\mathcal{P}}
\newcommand{\mQ}{\mathcal{Q}}
\newcommand{\bN}{\mathbb{N}}
\newcommand{\bR}{\mathbb{R}}
\newcommand{\suchthat}{\mathrel{}\middle|\mathrel{}}
\def\sideremark#1{\ifvmode\leavevmode\fi\vadjust{\vbox to0pt{\vss
 \hbox to 0pt{\hskip\hsize\hskip1em
 \vbox{\hsize3cm\tiny\raggedright\pretolerance10000
 \noindent #1\hfill}\hss}\vbox to8pt{\vfil}\vss}}}
\newcommand{\comment}[1]{}
\newtheorem{thm}{Theorem}[section]
\newtheorem{prop}[thm]{Proposition}
\newtheorem{lem}[thm]{Lemma}
\newtheorem{cor}[thm]{Corollary}
\theoremstyle{definition}
\newtheorem{defn}[thm]{Definition}
\theoremstyle{remark}
\newtheorem{remark}[thm]{Remark}
\numberwithin{equation}{section}
\begin{document}

\title{Boundary operators associated to the sixth-order GJMS operator}
\author{Jeffrey S. Case}
\address{109 McAllister Building \\ Penn State University \\ University Park, PA 16801}
\email{jscase@psu.edu}
\author{Weiyu Luo}
\address{Department of Electrical Engineering and Computer Science\\ University of California, Irvine\\ Irvine, CA 92617}
\email{weiyul7@uci.edu}
\keywords{conformally covariant operator; boundary operator; fractional Laplacian; Sobolev trace inequality; Poincar\'e--Einstein manifold}
\subjclass[2000]{Primary 58J32; Secondary 53A30, 58J40}
\begin{abstract}
We describe a set of conformally covariant boundary operators associated to the sixth-order GJMS operator on a conformally invariant class of manifolds which includes compactifications of Poincar\'e--Einstein manifolds.  This yields a conformally covariant energy functional for the sixth-order GJMS operator on such manifolds.  Our boundary operators also provide a new realization of the fractional GJMS operators of order one, three, and five as generalized Dirichlet-to-Neumann operators.  This allows us to prove some sharp Sobolev trace inequalities involving the interior $W^{3,2}$-seminorm, including an analogue of the Lebedev--Milin inequality on six-dimensional manifolds.
\end{abstract}
\maketitle

\section{Introduction}
\label{sec:intro}

The GJMS operators~\cite{GJMS1992} are conformally covariant differential operators with leading-order term an integer power of the Laplacian.  These operators play a key role in many questions at the intersection of geometry and analysis.  As one example, the GJMS operator of order two --- more commonly known as the conformal Laplacian --- controls the behavior of the scalar curvature within a conformal class and as such plays an important role in the resolution of the Yamabe Problem (see~\cite{LeeParker1987} and references therein).  As another example, the Sobolev embedding $W^{k,2}(\bR^n)\hookrightarrow L^{\frac{2n}{n-2k}}(\bR^n)$ can be seen as a consequence of the sharp Sobolev inequality
\begin{equation}
 \label{eqn:classical_sobolev_inequality}
 \int_{\bR^n} w\,(-\Delta)^kw \geq C_{n,k}\left( \int_{\bR^n} \lv w\rv^{\frac{2n}{n-2k}}\right)^{\frac{n-2k}{n}}
\end{equation}
for all $w\in W^{k,2}(\bR^n)$, where $C_{n,k}$ is an explicit constant and $(-\Delta)^k$ is the GJMS operator of order $2k$ on flat Euclidean space.   By conformal covariance, one can use stereographic projection to write~\eqref{eqn:classical_sobolev_inequality} as an equivalent inequality on the round $n$-sphere (cf.\ \cite{Beckner1993}).

In order to study a GJMS operator and its related scalar invariants on a manifold with boundary, one should first find conformally covariant boundary operators which are suitably adapted to the GJMS operator in question.  For the case of the conformal Laplacian, Cherrier~\cite{Cherrier1984} and Escobar~\cite{Escobar1988} showed that the trace (or restriction) operator and the conformal Robin operator serve as the appropriate boundary operators.  In particular, Escobar proved the sharp Sobolev trace inequality	
\begin{equation}
 \label{eqn:classical_sobolev_trace_inequality}
 \int_{\bR_+^{n+1}} w\,(-\Delta)w + \oint_{\bR^n} w\,(-\partial_y w) \geq D_{n,k}\left( \oint_{\bR^n} \lv w\rv^{\frac{2n}{n-1}} \right)^{\frac{2n}{n-1}} 
\end{equation}
for all $w\in W^{1,2}(\bR_+^{n+1})$, where $D_{n,k}$ is an explicit constant and $-\partial_y$ is the conformal Robin operator on flat Euclidean upper half space $\bR_+^{n+1}$.  The zeroth-order term of the conformal Robin operator is the mean curvature of the boundary, leading these operators to play an important role in the resolution of the boundary Yamabe problem (see, for example, \cite{Escobar1992a,Escobar1992}).

The conformal Laplace operator $L_2$ and conformal Robin operator $B_1^1$ naturally give rise to a conformally covariant Dirichlet-to-Neumann operator $\mB_1^1$ on the boundary of a Riemannian manifold for which $\ker L_2\cap\ker B_1^1=\{0\}$.  The operator $\mB_1^1$ recovers $(-\Delta)^{1/2}$ in Euclidean space.  On boundaries of asymptotically hyperbolic manifolds, there is another formally self-adjoint, conformally covariant pseudodifferential operator with leading term $(-\Delta)^{1/2}$, namely the fractional GJMS operator $P_1$ of order $1$; see~\cite{GrahamZworski2003}.  It turns out that $\mB_1^1=P_1$, provided the latter is defined in terms of the Loewner--Nirenberg metric~\cite{GuillarmouGuillope2007}.

The purpose of this article is to identify the boundary operators associated to the sixth-order GJMS operator and use them both to prove sharp Sobolev trace inequalites involving the $W^{3,2}$-seminorm and give a new realization of the fractional GJMS operators of order $1$, $3$, and $5$.  This work is motivated by recent developments in three directions.  First, boundary operators for the fourth-order GJMS operator --- more commonly known as the Paneitz operator --- and their relations to sharp Sobolev trace inequalities and fractional GJMS operators are now well understood~\cite{AcheChang2015,Case2015b,ChangQing1997a,GoverPeterson2018,Grant2003,Stafford2006}.  Second, this understanding of the Paneitz operator and its corresponding boundary operators is yielding new insights into the Yamabe-type problem for the fractional third-order $Q$-curvature~\cite{CaseChang2013}.  Third, an algorithmic approach to constructing boundary operators for the higher-order GJMS operators via tractor calculus has recently been established~\cite{BransonGover2001,GoverPeterson2018}.  Relative to this latter work, the benefits of our approach are (i) that it directly yields local formulas for the boundary operators which are valid in all settings where the sixth-order GJMS operator is defined and (ii) that the generalized Dirichlet-to-Neumann operators constructed by our method are automatically formally self-adjoint.  Unfortunately, unlike the tractor approach~\cite{BransonGover2001,GoverPeterson2018}, it does not seem practical to extend our method to GJMS operators of arbitrarily high order.

To describe our results, recall that the sixth-order GJMS operator of $(X^{n+1},g)$, $n\geq5$, is given by
\begin{multline}
 \label{eqn:L6}
 L_6 := -\Delta^3 + \Delta\delta\left((n-1)Jg-8P\right)d + \delta\left((n-1)Jg-8P\right)d\Delta \\ - \frac{n-1}{2}\Delta\left(J\Delta\right) - \delta T_4 d + \frac{n-5}{2}Q_6, 
\end{multline}
where $P$ is the Schouten tensor, $J$ is its trace, $B$ is the Bach tensor,
\begin{multline}
 \label{eqn:L6-T4}
 T_4 := \left(-(n-5)\Delta J + \frac{3n^2-6n-13}{4}J^2 - 4(n-3)\lv P\rv^2\right)g \\ - 8(n-1)JP + 48\left(P^2 + \frac{1}{3(n-3)}B\right)
\end{multline}
for $P^2$ the square of $P$ as an endomorphism, and $Q_6$ is the sixth-order $Q$-curvature
\begin{multline*}
 Q_6 := -\Delta^2 J - \frac{n-5}{2}J\Delta J - \frac{n+3}{2}\Delta J^2 + 4\Delta\lv P\rv^2 + 8\delta\left(P(\nabla J)\right) \\ + \frac{(n-1)(n+3)}{4}J^3 - 4(n+1)J\lv P\rv^2 + 16\left(\tr P^3 + \frac{1}{n-3}\lp B,P\rp\right) . 
\end{multline*}
We emphasize that~\eqref{eqn:L6} defines $L_6$ as an operator, so that the right-hand side consists of sums of compositions of operators.  See Section~\ref{sec:bg} for a more detailed explanation of our notation.  The conformal covariance of $L_6$ was proven independently by Branson~\cite{Branson1985} and W\"unsch~\cite{Wunsch1986}, though this can also be deduced via Juhl's recursive formula~\cite{Juhl2013} for the GJMS operators.  In fact, $L_6$ is well-defined so long as $\dim X$ is odd or $g$ is either locally conformally flat or Einstein.  However, not every four-dimensional manifold admits a sixth-order conformally covariant operator with leading-order term $(-\Delta)^3$; see~\cite{Graham1992}.

It is clear from~\eqref{eqn:L6} that $L_6$ is a sixth-order operator which is formally self-adjoint in the interior of $X$.  We thus expect there to be a set of six operators $B_j^5$, $0\leq j\leq 5$, of total and normal order $j$ which give rise to formally self-adjoint boundary problems for $L_6$.  The following theorem in fact gives a stronger statement about the boundary operators.

\begin{thm}
 \label{thm:boundary_operators}
 Let $(X^{n+1},g)$, $n\geq5$, be a compactification of a Poincar\'e--Einstein manifold $(X_0^{n+1},M^n,g_+)$.  There exist explicit operators $B_j^5\colon C^\infty(X)\to C^\infty(M)$,
 \begin{align*}
  B_0^5(u) & = u\rv_M, \\
  B_1^5(u) & = \eta u + \lots, \\
  B_2^5(u) & = \Delta u - \frac{4}{3}\oDelta u\rv_M + \lots, \\
  B_3^5(u) & = \eta\Delta u - 4\oDelta\eta u + \lots, \\
  B_4^5(u) & = -\oDelta^2 - 4\oDelta(\Delta u)\rv_M + 8\oDelta^2u\rv_M + \lots, \\
  B_5^5(u) & = \eta\Delta^2u + \frac{4}{3}\oDelta\eta\Delta u + \frac{8}{3}\oDelta^2\eta u + \lots,
 \end{align*}
 where $\eta$ denotes the outward-pointing normal vector field along the boundary, $\oDelta$ denotes the Laplacian defined in terms of the induced metric $\og:=g\rv_{TM}$ on the boundary, and ``$\,\lots$'' in $B_j^5$ denotes terms of order at most $j-1$ in $u$, such that
 \begin{enumerate}
  \item the operator $B_j^5$ is conformally covariant of bidegree $\bigl(-\frac{n-5}{2},-\frac{n+2j-5}{2}\bigr)$; i.e.
  \[ \hB_j^5(u) = e^{-\frac{n+2j-5}{2}\sigma}B_j^5\left(e^{\frac{n-5}{2}\sigma}u\right) \]
  for all $u,\sigma\in C^\infty(X)$, where $\hB_j^5$ is defined with respect to $\hg:=e^{2\sigma}g$; and
  \item the bilinear form $\mQ_6\colon C^\infty(X)\times C^\infty(X)\to\bR$,
  \[ \mQ_6(u,v) := \int_X u\,L_6v\,\dvol_g + \sum_{j=0}^2\oint_M B_j^5(u)\,B_{5-j}^5(v)\,\dvol_{\og} , \]
  is symmetric.
 \end{enumerate}
\end{thm}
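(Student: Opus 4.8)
The plan is to write down the operators $B_j^5$ explicitly in a collar of $M$ and then to establish their conformal covariance and the symmetry of $\mQ_6$ as two largely separate verifications; imposing both requirements over-determines the lower-order ($\lots$) terms, and the substance of the theorem is that the resulting system is nevertheless consistent. Everything is local near $M$ and involves only finitely many normal derivatives, so only finitely many terms of the relevant Taylor expansions are needed. Concretely, I would first fix the geodesic defining function $r$ of the representative $\og = g|_{TM}$ coming from the Poincar\'e--Einstein structure, so that $g = dr^2 + g_r$ on a collar $M\times[0,\epsilon)$ with $g_0 = \og$ and $\eta = -\partial_r$ along $M$. The Fefferman--Graham expansion of the Einstein metric $g_+ = r^{-2}g$ determines $g_r$, through the order we need, in terms of $\oP$ and $\oB$, and hence fixes the Taylor expansions in $r$ of $J$, $P$, $B$, $T_4$ and $Q_6$ --- that is, of every coefficient in~\eqref{eqn:L6}. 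This reduces $L_6$ near $M$ to an explicit operator $\sum_k a_k\,\partial_r^k$ whose coefficients $a_k$ are tangential operators on $M$ depending polynomially on $r$. I would also record how $r$ transforms under $\og\mapsto e^{2\sigma|_M}\og$ (equivalently $g\mapsto e^{2\sigma}g$): the new geodesic defining function agrees with $e^\sigma r$ to leading order in $r$, with corrections computable from the same expansion, and this is the input that makes the conformal transformation laws of the $B_j^5$ tractable.

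Next I would construct the $B_j^5$ recursively from the already-understood conformally covariant boundary operators of lower order --- the trace operator and the conformal Robin operator $B_1^1$ associated to $L_2$, and the four boundary operators associated to the Paneitz operator from the cited literature --- composed with the tangential conformally covariant operators on $(M,\og)$ of the appropriate orders, which produce the $\oDelta$-terms visible in the stated formulas; the leading terms listed in the theorem fix the normalization of each building block. Conformal covariance of bidegree $\bigl(-\tfrac{n-5}{2},-\tfrac{n+2j-5}{2}\bigr)$ is then proved by induction on $j$: the leading symbol transforms correctly by inspection, and the transformation law of $r$ together with the conformal covariance of $L_6$, of the matching bidegree $\bigl(-\tfrac{n-5}{2},-\tfrac{n+7}{2}\bigr)$, forces the lower-order terms to be exactly those for which the covariance identity holds. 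One must then check that this determination is self-consistent, and it is, precisely because $L_6$ itself is globally defined and conformally covariant on Poincar\'e--Einstein compactifications, so no obstruction of the kind that occurs for four-manifolds intervenes when $n\ge5$.

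For the symmetry of $\mQ_6$, I would start from Green's formula for $L_6$: since $L_6$ is formally self-adjoint in the interior, integration by parts gives
\[ \int_X\bigl(u\,L_6 v - v\,L_6 u\bigr)\,\dvol_g = \oint_M G(u,v)\,\dvol_{\og} \]
for an antisymmetric bilinear form $G$ in the boundary $5$-jets of its arguments, defined modulo a tangential divergence; $G$ is computed explicitly from~\eqref{eqn:L6} by integrating each summand by parts --- writing $L_6 = A + A^\ast + S$ with $A = \Delta\delta\bigl((n-1)Jg - 8P\bigr)d$ and $S$ the manifestly self-adjoint remainder --- and evaluating the curvature coefficients on $M$ via the first step. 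Since
\[ \mQ_6(u,v) - \mQ_6(v,u) = \oint_M\Bigl(G(u,v) + \sum_{j=0}^2\bigl(B_j^5(u)B_{5-j}^5(v) - B_j^5(v)B_{5-j}^5(u)\bigr)\Bigr)\,\dvol_{\og}, \]
the claim reduces to the pointwise identity, modulo a tangential divergence,
\[ G(u,v) = \sum_{j=0}^2\bigl(B_j^5(v)B_{5-j}^5(u) - B_j^5(u)B_{5-j}^5(v)\bigr). \]
To verify it I would change variables from the normal jets $u|_M,\eta u,\dots,\eta^5 u$ to $B_0^5 u,\dots,B_5^5 u$ --- a triangular, hence invertible, substitution, using the collar expansion of the coefficients --- substitute into $G$, and check the match. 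Conformal covariance from the second step strongly constrains the outcome: both $G(u,v)\,\dvol_{\og}$ and each pairing $B_j^5(u)B_{5-j}^5(v)\,\dvol_{\og}$ are conformally invariant, so weight considerations restrict $G$, once re-expressed in the $B$-basis, to the ``anti-diagonal'' pairings $j\leftrightarrow 5-j$ with constant coefficients, and it remains only to pin those constants down, which one does from the top-order part of $G$ coming from $-\Delta^3$.

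The main obstacle is the bookkeeping in this last step: tracking the many lower-order curvature terms produced when the ``potential'' pieces $\delta(\cdot)d$ and $\Delta\delta(\cdot)d$ of $L_6$ are integrated by parts, and reconciling them term-by-term with the $\lots$ in the $B_j^5$. This is also precisely where the Poincar\'e--Einstein hypothesis does essential work: it pins down the collar expansion of $g$ and forces the collapse of terms that would otherwise obstruct writing $G$ in the paired form above, so the computation genuinely uses the Fefferman--Graham data and does not go through for a general compactification. A secondary, purely mechanical difficulty is the length of the conformal-covariance verification once the lower-order terms of the $B_j^5$ are included.
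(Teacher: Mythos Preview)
Your strategy is plausible in outline but differs from the paper's proof in two substantive ways, and contains one genuine gap and one misconception.

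\textbf{Different route for symmetry.} The paper does \emph{not} attack the symmetry of $\mQ_6$ by computing the full Green boundary form $G$ and then matching it term-by-term against $\sum_j\bigl(B_j^5(v)B_{5-j}^5(u)-B_j^5(u)B_{5-j}^5(v)\bigr)$. Instead, having already established the conformal covariance of each $B_j^5$, it observes that $\mQ_6$ is conformally covariant, and therefore it suffices to verify symmetry in a single well-chosen conformal representative. The paper then shows (their Lemma~\ref{lem:l6-normal-form}) that one can always pick $g$ so that along $M$ one has $H=0$, $P(\eta,\eta)=\tfrac{1}{3}\oJ$, $\eta J=0$, and two further fourth- and fifth-order normalizations. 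In this gauge the $B_j^5$ simplify dramatically and the Divergence Theorem yields $\mQ_6(u,v)=\mF_I(u,v)+\mF_B(u,v)$ with both pieces visibly symmetric. This gauge-fixing trick is the main labor-saving device; your direct Green's-formula approach would work but is considerably longer.

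\textbf{Gap in the weight argument.} Your claim that ``weight considerations restrict $G$, once re-expressed in the $B$-basis, to the anti-diagonal pairings $j\leftrightarrow 5-j$ with constant coefficients'' is not justified as stated. Weight-matching only tells you that the coefficient of $B_j^5(u)B_k^5(v)$ must be a conformally invariant density of weight $5-j-k$; it does not force that coefficient to be constant (there are nontrivial conformal invariants on $M$), nor does it rule out terms involving tangential derivatives such as $\langle\onabla B_j^5(u),\onabla B_k^5(v)\rangle$ paired with curvature coefficients of the appropriate weight. You would need a further argument --- for instance a classification of the relevant conformally invariant bilinear densities, or simply the direct computation you are trying to avoid --- to close this.

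\textbf{Misconception about the hypothesis.} You assert that the Poincar\'e--Einstein hypothesis ``does essential work'' in the symmetry step and that the computation ``does not go through for a general compactification.'' This is not so: the paper proves both conformal covariance and symmetry under the strictly weaker, conformally invariant hypothesis that $M$ is \emph{coronal} (umbilic, $W(\eta,\cdot,\eta,\cdot)=0$, and $C(\eta,\cdot,\cdot)=0$). The Fefferman--Graham expansion is used only later, in Section~\ref{sec:poincare}, to identify the generalized Dirichlet-to-Neumann operators with the fractional GJMS operators; it plays no role in Theorem~\ref{thm:boundary_operators} itself. Your plan to build everything on the geodesic defining function therefore proves less than the paper does, and attributes to the PE structure a role it does not actually play here.

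\textbf{Different route for covariance.} Finally, the paper does not construct the $B_j^5$ recursively from lower-order boundary operators composed with tangential GJMS-type operators. It writes them down directly as explicit natural Riemannian operators (Propositions~\ref{prop:B0-3}, \ref{prop:B4}, \ref{prop:B5}) and verifies covariance by computing the infinitesimal conformal variation $\delta_{-\frac{n-5}{2}}B_j^5$ using the Leibniz rule for $\delta_w$ (Lemma~\ref{lem:deltaw_leibniz}). This is more direct than your proposed induction, and yields as a by-product the critical-dimension transformation laws for the $T_j^5$.
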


In fact, we prove a stronger version of Theorem~\ref{thm:boundary_operators} which requires only that $L_6$ is defined and that the boundary satisfy certain conformally invariant assumptions involving only the extrinsic geometry of the boundary $\partial X$; see Section~\ref{sec:invariant} for this version and explicit formulae for the operators $B_j^5$.

By definition, $(L_6;B)$ is formally self-adjoint if $\int uL_6v=\int vL_6u$ for all $u,v\in\ker B$, where $B$ is a $3$-tuple of boundary operators.  It follows from Theorem~\ref{thm:boundary_operators} that each of the eight possible $3$-tuples $B$ formed by choosing an operator from each of $\{B_0^5,B_5^5\}$, $\{B_1^5,B_4^5\}$, and $\{B_2^5,B_3^5\}$ is such that $(L_6;B)$ is formally self-adjoint.  The operators constructed by Gover and Peterson~\cite{GoverPeterson2018}, which are defined whenever $L_6$ is defined and under no assumptions on the geometry of the boundary, also have the property that such triples $(L_6;B)$ are formally self-adjoint, though it is not yet known whether the corresponding bilinear form $\mQ_6$ is symmetric.  Similar operators constructed earlier by Branson and Gover~\cite{BransonGover2001} are such that the corresponding bilinear form $\mQ_6$ is symmetric, but their construction does not work in the critical dimension $n=5$.

One reason to desire the symmetry of $\mQ_6$, rather than just the formal self-adjointness of $(L_6;B)$, is that it implies the formal self-adjointness of the generalized Dirichlet-to-Neumann operators associated to  $L_6$ and its boundary operators from Theorem~\ref{thm:boundary_operators}.  As we show in Proposition~\ref{prop:dirichlet-to-neumann-operators} below, under the (conformally invariant) assumption that
\[ \ker L_6 \cap \ker B_0^5 \cap \ker B_1^5 \cap \ker B_2^5 = \{0\}, \]
for any triple $(f,\phi,\psi)\in\bigl(C^\infty(M)\bigr)^3$, there is a unique solution $u_{f,\phi,\psi}\in C^\infty(X)$ of
\begin{equation}
 \label{eqn:L6_extension}
 \begin{cases}
  L_6(u) = 0, & \text{in $X$}, \\
  B_0^5(u) = f, & \text{on $M$}, \\
  B_1^5(u) = \phi, & \text{on $M$}, \\
  B_2^5(u) = \psi, & \text{on $M$}.
 \end{cases}
\end{equation}
In particular, the generalized Dirichlet-to-Neumann operators $\mB_5^5(f):=B_5^5(u_{f,0,0})$, $\mB_3^5(\phi):=B_4^5(u_{0,\phi,0})$, and $\mB_1^5(\psi):=B_3^5(u_{0,0,\psi})$ are well-defined, and the symmetry of $\mQ_6$ implies that these operators are formally self-adjoint.  Indeed, $\mB_j^5$, $j\in\{1,3,5\}$, is also conformally covariant with leading order term a multiple of $(-\Delta)^{j/2}$; see Proposition~\ref{prop:dirichlet-to-neumann-operators}.

The operators $\mB_j^5$ constructed above have the same properties as the fractional GJMS operators $P_j$ constructed by Graham and Zworski~\cite{GrahamZworski2003}, leading one to wonder how these operators are related.  When $(X^{n+1},g)$ is a compactification of a Poincar\'e--Einstein manifold, it turns out that $\mB_j^5$ and $P_j$ are proportional.  Indeed, even more is true:

\begin{thm}
 \label{thm:dirichlet-to-neumann}
 Let $(X^{n+1},g)$ be a compactification of a Poincar\'e--Einstein manifold $(X_0^{n+1},M^n,g_+)$ such that $\frac{n^2}{4}-\gamma^2\not\in\sigma_{pp}(-\Delta_{g_+})$ for $\gamma\in\{1/2,3/2,5/2\}$.  Suppose additionally that $u\in C^\infty(X)$ is such that $L_6u=0$.  Then
 \begin{align*}
  B_5^5(u) & = \frac{8}{3}P_5\left(B_0^5(u)\right), \\
  B_4^5(u) & = 8P_3\left(B_1^5(u)\right), \\
  B_3^5(u) & = 3P_1\left(B_2^5(u)\right),
 \end{align*}
 where $B_j^5$, $0\leq j\leq 5$, are the boundary operators of Theorem~\ref{thm:boundary_operators} and $P_{2\gamma}$ are the fractional GJMS operators of order $2\gamma$.
\end{thm}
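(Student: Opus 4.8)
We would prove Theorem~\ref{thm:dirichlet-to-neumann} by pushing everything onto the Poincar\'e--Einstein metric and exploiting the factorization of the bulk sixth-order GJMS operator there. Since every operator in the statement is conformally covariant of a matching bidegree, it suffices to treat one representative of the conformal class, so we take the geodesic compactification $g = d\rho^2 + g_\rho$ associated to a chosen conformal infinity; then $\eta = -\partial_\rho$ and $\oDelta$ is the Laplacian of $\og = g_\rho|_{\rho=0}$. Conformal covariance of $L_6$ turns $L_6 u = 0$ into $L_6^{g_+}\bigl(\rho^{(n-5)/2}u\bigr) = 0$ on $X_0$; and since $g_+$ is Einstein, \eqref{eqn:L6} reduces (as does the standard factorization of GJMS operators on Einstein manifolds) to $L_6^{g_+} = D_{1/2}D_{3/2}D_{5/2}$ with $D_\gamma := -\Delta_{g_+} - \tfrac{n^2}{4} + \gamma^2$. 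The indicial roots $\tfrac n2\pm\gamma$ of the $D_\gamma$, $\gamma\in\{1/2,3/2,5/2\}$, are precisely the six indicial roots of $L_6^{g_+}$. Put $\phi := \rho^{(n-5)/2}u$.

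Because the $D_\gamma$ commute and the $\mu_\gamma := \tfrac{n^2}{4}-\gamma^2$ are distinct, partial fractions yield an operator identity $\Id = \sum_\gamma A_\gamma \prod_{\gamma'\neq\gamma}D_{\gamma'}$ with $A_\gamma = \prod_{\gamma'\neq\gamma}(\mu_\gamma-\mu_{\gamma'})^{-1}$; explicitly $A_{5/2}=\tfrac1{24}$, $A_{3/2}=-\tfrac18$, $A_{1/2}=\tfrac1{12}$. Hence $\phi = \phi_{1/2}+\phi_{3/2}+\phi_{5/2}$ with $\phi_\gamma := A_\gamma\bigl(\prod_{\gamma'\neq\gamma}D_{\gamma'}\bigr)\phi \in \ker D_\gamma$. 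Each $\phi_\gamma$ is a differential operator applied to $\phi$, hence polyhomogeneous and $O(\rho^{(n-5)/2})$; being in $\ker D_\gamma$ upgrades this to $O(\rho^{n/2-\gamma})$ (the non-indicial coefficients vanish), with leading coefficient $h_\gamma$. The hypothesis $\mu_\gamma\notin\sigma_{pp}(-\Delta_{g_+})$ then identifies $\phi_\gamma$ with the Graham--Zworski scattering solution of $D_\gamma$ with Dirichlet datum $h_\gamma$: the difference of two such solutions lies in $\ker D_\gamma \cap O(\rho^{n/2+\gamma}) \subset \ker\bigl(-\Delta_{g_+} - \mu_\gamma\bigr)\cap L^2$, hence vanishes. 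Therefore the coefficient of $\rho^{n/2+\gamma}$ in $\phi_\gamma$ equals $d_\gamma^{-1}P_{2\gamma}(h_\gamma)$, where $d_\gamma$ is the Graham--Zworski normalizing constant.

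It remains to identify the $h_\gamma$ and the $\rho^{n/2+\gamma}$-coefficients of the $\phi_\gamma$ with the boundary operators of Theorem~\ref{thm:boundary_operators}. Since $\phi_{3/2}$ and $\phi_{1/2}$ decay faster than $\rho^{n/2-5/2}$, the $\rho^{n/2-5/2}$-coefficient of $\phi$ is $h_{5/2}$, so $h_{5/2} = u|_M = B_0^5(u)$; comparing the $\rho^{n/2-3/2}$- and $\rho^{n/2-1/2}$-coefficients of $\phi$ against the indicial recursions of $D_{5/2}$ (resp.\ of $D_{5/2}$ and $D_{3/2}$) gives $h_{3/2} = c_1 B_1^5(u)$ and $h_{1/2} = c_2 B_2^5(u)$ for universal constants. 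Dually, the $\rho^{n/2+\gamma}$-coefficient of $\phi_\gamma$ is a universal multiple $c'_\gamma$ of $B_5^5(u)$, $B_4^5(u)$, $B_3^5(u)$ for $\gamma = 5/2, 3/2, 1/2$ respectively, once $L_6 u = 0$ is used to reduce normal derivatives of $u$ of order exceeding $5$. Both matchings amount to comparing two universal jet-formulas along $M$ modulo the relations imposed by $L_6 u = 0$, and are carried out using the explicit construction of the $B_j^5$ in Section~\ref{sec:invariant}; indeed the lower-order (``$\lots$'') terms of $B_1^5$ and $B_2^5$ are precisely the recursion coefficients of the faster-decaying scattering solutions. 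Combining everything, $B_5^5(u) = c'_{5/2} d_{5/2}^{-1}P_5(B_0^5(u))$ and similarly for $B_4^5, B_3^5$; a bookkeeping of $A_\gamma$, $c_1$, $c_2$, $c'_\gamma$ and $d_\gamma$ yields the constants $\tfrac83$, $8$, $3$.

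The main obstacle is this last matching: confirming that the explicitly defined operators $B_j^5$, including all of their curvature-dependent lower-order terms, reproduce exactly the scattering data of the $\phi_\gamma$, and tracking every multiplicative constant (the partial-fraction coefficients, the $\Gamma$-factors in the Graham--Zworski normalization of $P_{2\gamma}$, and the constants arising from eliminating high normal derivatives via $L_6 u = 0$). A secondary technical point is the $\rho^n\log\rho$ term in the Poincar\'e--Einstein expansion, and the possible $\log\rho$ at order $\rho^{n/2+\gamma}$ since $2\gamma\in\bN$: one must check that these do not obstruct the coefficient extractions, which is automatic when $n>5$ but requires extra care in the critical case $n=5$, $\gamma=5/2$, where the logarithmic term enters the definition of $P_5$.
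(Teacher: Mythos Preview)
Your approach is essentially the paper's: reduce by conformal covariance to the geodesic compactification, use the Einstein factorization $L_6^{g_+}=D_{1/2}D_{3/2}D_{5/2}$, and split the solution into pieces in each $\ker D_\gamma$ which are then identified with Graham--Zworski scattering solutions via the spectral hypothesis. The only organizational difference is that where you decompose $\rho^{(n-5)/2}u$ by partial fractions, the paper runs the argument in the other direction: it builds the candidate $V=v_f-v_\phi+\tfrac12 v_\psi$ directly from the three Poisson solutions $\mP\bigl(\tfrac{n}{2}+\gamma\bigr)$ with data $(f,\phi,\psi)=(B_0^5u,B_1^5u,B_2^5u)$, checks that $v:=r^{-(n-5)/2}V$ has the same Dirichlet data as $u$, and invokes uniqueness to conclude $u=v$. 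The ``main obstacle'' you flag---matching the explicit $B_j^5$ to the scattering coefficients and tracking all constants---is dispatched by a preparatory lemma (Proposition~\ref{prop:geodesic_operators}) which rewrites each $B_j^5$ in the geodesic compactification as $(-\partial_r)^j$ plus a polynomial in the very recursion operators $T_2(s),T_4(s)$ of~\eqref{eqn:expansion-operators}; once this is in hand the expansion~\eqref{eqn:v_expansion_6} gives $B_3^5(u)=-3\cpsi$, $B_4^5(u)=24\cphi$, $B_5^5(u)=-120\cf$, and the constants $3,8,\tfrac83$ come from $d_\gamma$ alone, with no need to eliminate high normal derivatives via $L_6u=0$. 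Your log-term worry is not an issue in the range $n\ge5$: the $F$-expansion is even through order $n-1$, so the first odd power $r^5$ in $r^{-(n-5)/2}v_f$ is exactly the $G$-contribution $\cf$.
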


In other words, for compactifications of Poincar\'e--Einstein manifolds, it holds that $B_5^5(u_{f,0,0})=B_5^5(u_{f,\phi,\psi})$ for all $f,\phi,\psi\in C^\infty(M)$.  The proof of Theorem~\ref{thm:dirichlet-to-neumann} uses heavily the fact that the GJMS operators factor at Einstein metrics~\cite{FeffermanGraham2012,Gover2006q}.  We do not know if the Poincar\'e--Einstein assumption can be relaxed.  Note also that requiring $u\in\ker B_1^5\cap\ker B_2^5$ in Theorem~\ref{thm:dirichlet-to-neumann} yields a curved analogue of the higher-order Caffarelli--Silvestre-type extension theorem of R.\ Yang~\cite{ChangYang2017}.

Another reason to desire the symmetry of $\mQ_6$ in Theorem~\ref{thm:boundary_operators} is that it gives rise to variational characterizations of solutions of $L_6u=v$ with various boundary conditions.  For example, a function $u\in C^\infty(X)$ is a solution of~\eqref{eqn:L6_extension} if and only if it is a critical point of the functional
\[ u \mapsto \mE_6(u) := \mQ_6(u,u) \]
when constrained to the set
\begin{equation}
 \label{eqn:mCfphipsi}
 \mC_{f,\phi,\psi} := \left\{ u\in C^\infty(M) \suchthat B_0^5(u)=f, B_1^5(u)=\phi, B_2^5(u)=\psi \right\} .
\end{equation}
Under an additional spectral assumption on the Laplacian of the Poincar\'e--Einstein metric $g_+$, one can in fact minimize the functional $\mE_6$ in $\mC_{f,\phi,\psi}$.

\begin{thm}
 \label{thm:L2-trace}
 Let $(X^{n+1},g)$ be a compactification of a Poincar\'e--Einstein manifold $(X_0^{n+1},M^n,g_+)$ such that $\lambda_1(-\Delta_{g_+}) > \frac{n^2-1}{4}$.  Given any $f,\phi,\psi\in C^\infty(M)$, it holds that
 \begin{equation}
  \label{eqn:L2-trace}
  \mE_6(u) \geq \oint_M \left( \frac{8}{3}f\,P_5f + 8\phi\,P_3\phi + 3\psi\,P_1\psi \right) \,\dvol_{g\rv_{TM}} 
 \end{equation}
 for all $u\in\mC_{f,\phi,\psi}$.  Moreover, equality holds in~\eqref{eqn:L2-trace} if and only if $u$ is the unique solution of~\eqref{eqn:L6_extension}.
\end{thm}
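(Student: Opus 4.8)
The plan is to reduce \eqref{eqn:L2-trace} to a single positivity statement and then establish that positivity by transferring the problem to the complete Einstein manifold $(X_0,g_+)$, where $L_6$ factors. Concretely, I would first show that $\mE_6(w)>0$ for every nonzero $w$ in the Dirichlet class $\mC_{0,0,0}$ of \eqref{eqn:mCfphipsi}. Granting this, the rest is formal bookkeeping: for $w\in\mC_{0,0,0}$ the three boundary integrals in $\mQ_6(w,w)$ vanish (each contains a factor $B_j^5(w)$ with $j\in\{0,1,2\}$), so $\mE_6(w)=\int_X w\,L_6w\,\dvol_g$, and hence any $w\in\ker L_6\cap\ker B_0^5\cap\ker B_1^5\cap\ker B_2^5$ has $\mE_6(w)=0$, forcing $w=0$; thus the hypothesis of Proposition~\ref{prop:dirichlet-to-neumann-operators} holds and the solution $u_0:=u_{f,\phi,\psi}$ of \eqref{eqn:L6_extension} exists, is unique, and is smooth. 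Given $u\in\mC_{f,\phi,\psi}$, write $u=u_0+w$ with $w\in\mC_{0,0,0}$; using the bilinearity and \emph{symmetry} of $\mQ_6$ from Theorem~\ref{thm:boundary_operators}(2), together with $L_6u_0=0$ and $B_j^5(w)=0$ for $j\in\{0,1,2\}$,
\[
 \mQ_6(u_0,w)=\mQ_6(w,u_0)=\int_X w\,L_6u_0\,\dvol_g+\sum_{j=0}^2\oint_M B_j^5(w)\,B_{5-j}^5(u_0)\,\dvol_{\og}=0,
\]
so that $\mE_6(u)=\mE_6(u_0)+\mE_6(w)\geq\mE_6(u_0)$, with equality if and only if $w=0$, i.e.\ $u=u_0$. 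Since $L_6u_0=0$, the interior term of $\mE_6(u_0)=\mQ_6(u_0,u_0)$ drops, and Theorem~\ref{thm:dirichlet-to-neumann}---whose hypotheses hold here because $\tfrac{n^2}4-\gamma^2\leq\tfrac{n^2-1}4<\lambda_1(-\Delta_{g_+})$ for $\gamma\in\{\tfrac12,\tfrac32,\tfrac52\}$---identifies $B_5^5(u_0)=\tfrac83P_5f$, $B_4^5(u_0)=8P_3\phi$, $B_3^5(u_0)=3P_1\psi$, so $\mE_6(u_0)$ is exactly the right-hand side of \eqref{eqn:L2-trace}.

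It remains to prove the positivity of $\mE_6$ on $\mC_{0,0,0}$, and here is where the Poincar\'e--Einstein structure enters. Writing $g=\rho^2g_+$ for a boundary defining function $\rho$ and using the conformal covariance of $L_6$ (bidegree $\bigl(-\tfrac{n-5}2,-\tfrac{n+7}2\bigr)$ in dimension $n+1$), I would rewrite
\[
 \mE_6(w)=\int_X w\,L_6w\,\dvol_g=\int_{X_0}\phi\,L_6^{g_+}(\phi)\,\dvol_{g_+},\qquad \phi:=\rho^{\frac{n-5}2}w .
\]
Because $(X_0,g_+)$ is Einstein, $L_6^{g_+}$ factors~\cite{FeffermanGraham2012,Gover2006q} as
\[
 L_6^{g_+}=\Bigl(-\Delta_{g_+}-\tfrac{n^2-1}4\Bigr)\Bigl(-\Delta_{g_+}-\tfrac{n^2-9}4\Bigr)\Bigl(-\Delta_{g_+}-\tfrac{n^2-25}4\Bigr),
\]
a product of three commuting self-adjoint operators, each of which exceeds $-\Delta_{g_+}-\tfrac{n^2-1}4$ by the nonnegative constant $0$, $2$, or $6$; hence, under the hypothesis $\lambda_1(-\Delta_{g_+})>\tfrac{n^2-1}4$ (which is precisely the positivity of the conformal Laplacian of $g_+$), each factor is bounded below by a positive constant, so $L_6^{g_+}\geq cI$ for some $c>0$. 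On the other hand, inspecting the leading terms of the boundary operators in Theorem~\ref{thm:boundary_operators} shows that $w\in\mC_{0,0,0}$ forces $w$ to vanish to third order at $M$: $B_0^5(w)=0$ gives $w=O(\rho)$, then $B_1^5(w)=0$ gives $w=O(\rho^2)$, then $B_2^5(w)=0$ gives $w=O(\rho^3)$ (at each stage the ``$\lots$'' and purely tangential terms of $B_j^5$ contribute nothing since $w\rv_M=0$, and one uses that $\lv\nabla\rho\rv_g$ does not vanish near $M$). Consequently $\phi=\rho^{\frac{n+1}2}\cdot(\text{bounded})$ lies in $W^{3,2}(X_0,g_+)$ with all $g_+$-covariant derivatives of size $O\bigl(\rho^{\frac{n+1}2}\bigr)$; this decay legitimizes the three integrations by parts on the complete manifold with no boundary contribution at $M$, giving $\mE_6(w)=\|(L_6^{g_+})^{1/2}\phi\|_{L^2(X_0,g_+)}^2\geq c\,\|\phi\|_{L^2(X_0,g_+)}^2>0$ whenever $\phi\neq0$, i.e.\ whenever $w\neq0$.

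The main obstacle is the analytic content of this last step: making precise that membership in $\mC_{0,0,0}$ really does force enough decay of $\phi$ both to place $\phi$ in the form domain of $L_6^{g_+}$ and to kill every boundary term in the three integrations by parts on the complete Einstein manifold. This is exactly where the precise leading symbols of $B_0^5,B_1^5,B_2^5$ from Theorem~\ref{thm:boundary_operators} and the nondegeneracy of $\rho$ near $M$ are used, and where---for $n$ even, when the weight $\rho^{(n-5)/2}$ is only H\"older---one must argue with weighted Sobolev spaces rather than pointwise-smooth expansions. Everything else reduces to bilinear manipulation with the symmetry of $\mQ_6$, the factorization of GJMS operators at Einstein metrics, and Theorem~\ref{thm:dirichlet-to-neumann}.
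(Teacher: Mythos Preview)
Your proposal is correct and ultimately tracks the paper's route: combine Theorem~\ref{thm:dirichlet-to-neumann} with the fact that $\mE_6$ is minimized on $\mC_{f,\phi,\psi}$ precisely at the solution of~\eqref{eqn:L6_extension}. The paper packages the latter as Theorem~\ref{thm:genl-L2-trace}, whose hypothesis is $\lambda_{1,D}(L_6)>0$, and then simply states that Theorem~\ref{thm:L2-trace} is ``an immediate consequence'' of Theorems~\ref{thm:dirichlet-to-neumann_restatement} and~\ref{thm:genl-L2-trace}---leaving the implication $\lambda_1(-\Delta_{g_+})>\tfrac{n^2-1}{4}\Rightarrow\lambda_{1,D}(L_6)>0$ unspoken.

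Your argument supplies precisely that missing link, and does so by the natural mechanism: conformal transfer $\mE_6(w)=\int_{X_0}\phi\,L_6^{g_+}\phi\,\dvol_{g_+}$ together with the Einstein factorization $L_6^{g_+}=p(-\Delta_{g_+})$, so that the spectral gap makes $p$ strictly positive on $\sigma(-\Delta_{g_+})$. You also streamline the deduction of the inequality itself: instead of the paper's variational argument in Theorem~\ref{thm:genl-L2-trace} (bounded below, then Lopatinskii--Shapiro compactness for a minimizing sequence), you first get existence of $u_0$ from Proposition~\ref{prop:dirichlet-to-neumann-operators} (needing only $\ker_DL_6=\{0\}$, which your positivity gives) and then use the clean orthogonal splitting $\mE_6(u)=\mE_6(u_0)+\mE_6(w)$ via the symmetry of $\mQ_6$. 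This is more direct and avoids any compactness machinery. The analytic point you flag---that $w\in\mC_{0,0,0}$ forces $w=O(\rho^3)$ and hence $\phi=\rho^{(n-5)/2}w=O(\rho^{(n+1)/2})$ lies in the form domain of $L_6^{g_+}$ so the spectral bound applies---is genuine but routine once stated; the paper does not address it either.
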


Note that the spectral assumption of Thoerem~\ref{thm:L2-trace} holds automatically when the conformal boundary has nonnegative Yamabe constant~\cite{Lee1995}.

In Section~\ref{sec:traces}, we prove a more general version of Theorem~\ref{thm:L2-trace} which only requires conformally invariant assumptions on the spectrum of $L_6$ and the extrinsic geometry of the boundary $\partial X$.

Theorem~\ref{thm:L2-trace} gives a sharp norm inequality for the well-known embedding
\begin{equation}
 \label{eqn:trace_embedding}
 \Tr \colon W^{3,2}(X) \hookrightarrow W^{5/2,2}(\partial X) \oplus W^{3/2,2}(\partial X) \oplus W^{1/2,2}(\partial X) 
\end{equation}
as well as an explicit right inverse.  By combining~\eqref{eqn:trace_embedding} with the Sobolev embedding $W^{k,2}(M^n)\hookrightarrow L^{\frac{2n}{n-2k}}(M^n)$, $n>2k$, one obtains the embedding
\begin{equation}
 \label{eqn:Lp-trace-embedding}
 W^{3,2}(X^{n+1}) \hookrightarrow L^{\frac{2n}{n-5}}(\partial X) \oplus L^{\frac{2n}{n-3}}(\partial X) \oplus L^{\frac{2n}{n-1}}(\partial X) .
\end{equation}
One can deduce a \emph{sharp} norm inequality for the embedding~\eqref{eqn:Lp-trace-embedding} from Theorem~\ref{thm:L2-trace} and a sharp norm inequality for the embedding $W^{k,2}(M^n)\hookrightarrow L^{\frac{2n}{n-2k}}(M^n)$.  Three particular cases of interest are upper half space, a closed Euclidean ball, and a round hemisphere.

\begin{cor}
 \label{cor:Lp-trace-upper}
 Let $\bR_+^{n+1}$ denote the (closed) upper half space
 \[ \bR_+^{n+1} = \left\{ (x,y) \in \bR^n \times [0,\infty) \right\} \]
 equipped with the Euclidean metric.  For all $u\in C^\infty(\bR_+^{n+1}) \cap W^{3,2}(\bR_+^{n+1})$, it holds that
 \begin{multline*}
  \frac{8}{3}C_{n,5/2}\lV f\rV_{\frac{2n}{n-5}}^2 + 8C_{n,3/2}\lV\phi\rV_{\frac{2n}{n-3}}^2 + 3C_{n,1/2}\lV\psi\rV_{\frac{2n}{n-1}}^2 \\ \leq \int_{\bR_+^{n+1}} \lv\nabla\Delta u\rv^2 + \oint_{\partial\bR_+^{n+1}} \left\{ 8\lp\onabla\psi,\onabla\phi\rp + \frac{16}{3}(\oDelta\phi)(\oDelta f) \right\} ,
 \end{multline*}
 where the $L^p$-norms on the left-hand side are taken with respect to the Lebesgue measure on $\bR^n=\partial\bR_+^{n+1}$,
 \begin{align*}
  f(x) & = u(x,0), \\
  \phi(x) & = -\frac{\partial u}{\partial y}(x,0), \\
  \psi(x) & = \frac{\partial^2u}{\partial y^2}(x,0) - \frac{1}{3}\oDelta u(x,0)
 \end{align*}
 for all $x\in\bR^n$, and
 \begin{equation}
  \label{eqn:sobolev_constants}
  C_{n,\gamma} = \frac{\Gamma\bigl(\frac{n+2\gamma}{2}\bigr)}{\Gamma\bigl(\frac{n-2\gamma}{2}\bigr)}\Vol(S^n)^{\frac{2\gamma}{n}} .
 \end{equation}
 Moreover, equality holds if and only if $\Delta^3u=0$ and there are points $x_1,x_2,x_3\in\bR^n$, constants $a_1,a_2,a_3\in\bR$, and positive constants $\varepsilon_1,\varepsilon_2,\varepsilon_3\in\bR$ such that
 \begin{equation}
  \label{eqn:Lp-trace-upper-rigidity} 
  \begin{split}
   f(x) & = a_1\left(\varepsilon_1 + \lv x-x_1\rv^2\right)^{-\frac{n-5}{2}} , \\
   \phi(x) & = a_2\left(\varepsilon_2 + \lv x-x_2\rv^2\right)^{-\frac{n-3}{2}} , \\
   \psi(x) & = a_3\left(\varepsilon_3 + \lv x-x_3\rv^2\right)^{-\frac{n-1}{2}}  
  \end{split}
 \end{equation}
 for all $x\in\bR^n$.
\end{cor}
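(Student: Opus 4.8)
The plan is to deduce Corollary~\ref{cor:Lp-trace-upper} from Theorem~\ref{thm:L2-trace} together with the sharp Sobolev inequalities on $\bR^n$ for the fractional GJMS operators. First I would set up the geometry. The closed ball $\oB^{n+1}$ with its flat metric $\delta$ is a compactification of hyperbolic space: writing $g_+$ for the Poincar\'e ball metric and $\rho := \tfrac12(1-\lv x\rv^2)$, one has $\rho^2 g_+ = \delta$, so $(\oB^{n+1},\delta)$ is a compactification of the Poincar\'e--Einstein manifold $(B^{n+1},S^n,g_+)$, and $\lambda_1(-\Delta_{g_+}) = \tfrac{n^2}{4} > \tfrac{n^2-1}{4}$, so Theorem~\ref{thm:L2-trace} applies with conformal infinity the round sphere $(S^n,g_{S^n})$. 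Since $\bR_+^{n+1}$ is conformally diffeomorphic (via an inversion in a boundary point) to $\oB^{n+1}$ with one boundary point deleted, the conformal covariance of $L_6$ and of the operators $B_j^5$ from Theorem~\ref{thm:boundary_operators}, together with the conformal invariance of $\mE_6$ on weight $-\tfrac{n-5}{2}$ densities and of the critical $L^{2n/(n-2\gamma)}$-norms on the boundary, transfers the inequality of Theorem~\ref{thm:L2-trace} to $\bR_+^{n+1}$; a cut-off and density argument using the $W^{3,2}$-decay shows the deleted point is harmless. (Alternatively one may argue directly on $\bR_+^{n+1}$ using the version of Theorem~\ref{thm:L2-trace} from Section~\ref{sec:traces}.)

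Next I would make the functional $\mE_6$ explicit on flat half space. Here $P$, $J$, $B$, $Q_6$ and $T_4$ all vanish, so $L_6 = -\Delta^3$, and the hyperplane $\{y=0\}$ is totally geodesic with unit outward normal $\eta = -\partial_y$, so every curvature and second fundamental form term in the operators of Theorem~\ref{thm:boundary_operators} disappears. In particular $B_0^5(u) = f$, $B_1^5(u) = \phi$, $B_2^5(u) = \psi$ in the notation of the statement, while $B_3^5$, $B_4^5$, $B_5^5$ reduce to the displayed constant-coefficient operators in $\partial_y$ and $\oDelta$. Applying Green's identity three times to $-\int_{\bR_+^{n+1}} u\,\Delta^3 u$ produces $\int_{\bR_+^{n+1}}\lv\nabla\Delta u\rv^2$ plus boundary integrals; adding $\sum_{j=0}^{2}\oint_{\bR^n} B_j^5(u)\,B_{5-j}^5(u)$, the highest normal-order boundary contributions cancel by design, and tangential integration by parts on $\bR^n$ (no lower-dimensional boundary terms, again by decay) collapses the rest to
\[ \mE_6(u) = \int_{\bR_+^{n+1}}\lv\nabla\Delta u\rv^2 + \oint_{\bR^n}\left\{ 8\lp\onabla\psi,\onabla\phi\rp + \tfrac{16}{3}(\oDelta\phi)(\oDelta f)\right\}. \]
I expect this boundary bookkeeping to be the main obstacle: it requires the explicit flat-space forms of all six operators $B_j^5$ and careful tracking of many boundary terms through several integrations by parts.

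Then I would identify the right-hand side of Theorem~\ref{thm:L2-trace}. For hyperbolic space with conformal infinity flat $\bR^n$, the fractional GJMS operator $P_{2\gamma}$ is a positive constant multiple of the fractional Laplacian $(-\oDelta)^\gamma$, and the associated sharp Sobolev inequality
\[ \oint_{\bR^n} w\,P_{2\gamma}w \;\geq\; C_{n,\gamma}\lVert w\rVert_{\frac{2n}{n-2\gamma}}^2 \]
with $C_{n,\gamma}$ as in~\eqref{eqn:sobolev_constants} follows, e.g., from Beckner's sharp inequality on $S^n$ by stereographic projection (equivalently from Lieb's sharp Hardy--Littlewood--Sobolev inequality), with equality exactly when $w(x) = a(\varepsilon + \lv x - x_0\rv^2)^{-\frac{n-2\gamma}{2}}$. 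Applying this with $(w,\gamma)$ equal to $(f,\tfrac52)$, $(\phi,\tfrac32)$ and $(\psi,\tfrac12)$, and combining with Theorem~\ref{thm:L2-trace} and the identity for $\mE_6$ above, yields the stated inequality.

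Finally, for the rigidity statement: equality in the corollary forces equality both in Theorem~\ref{thm:L2-trace}, hence $u$ solves~\eqref{eqn:L6_extension}, i.e.\ $\Delta^3 u = 0$ (with the boundary data $f,\phi,\psi$ being its own traces), and in each of the three Sobolev inequalities, hence $f$, $\phi$, $\psi$ have the forms in~\eqref{eqn:Lp-trace-upper-rigidity} with exponents $\tfrac{n-5}{2}$, $\tfrac{n-3}{2}$, $\tfrac{n-1}{2}$. Conversely, if $\Delta^3 u = 0$ and $f,\phi,\psi$ have those forms, then since $\ker L_6\cap\ker B_0^5\cap\ker B_1^5\cap\ker B_2^5 = \{0\}$ on $\bR_+^{n+1}$ within $W^{3,2}$, $u$ is the unique solution of~\eqref{eqn:L6_extension} for its own boundary data, so equality holds in Theorem~\ref{thm:L2-trace}, and the bubble forms give equality in the Sobolev inequalities; hence equality in the corollary.
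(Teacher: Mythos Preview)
Your proposal is correct and follows essentially the same route as the paper. The paper packages the two steps ``apply Theorem~\ref{thm:L2-trace} on a compactification of hyperbolic space'' and ``apply the sharp fractional Sobolev inequality on the boundary'' into a single auxiliary result (Proposition~\ref{prop:Lp-trace-subcritical}), which it then invokes after noting the conformal equivalence of $\bR_+^{n+1}$ with the closed ball; the explicit computation of $\mE_6$ on flat half space via the simplified boundary operators and integration by parts is carried out exactly as you describe.
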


\begin{cor}
 \label{cor:Lp-trace-disk}
 Let $B^{n+1}\subset\bR^{n+1}$ denote the closed unit ball
 \[ B^{n+1} = \left\{ x\in\bR^{n+1} \suchthat \lv x\rv^2\leq 1 \right\} \]
 equipped with the Euclidean metric.  For all $u\in C^\infty(B^{n+1})$, it holds that
 \begin{align*}
  \MoveEqLeft[2] \frac{8}{3}C_{n,5/2}\lV f\rV_{\frac{2n}{n-5}}^2 + 8C_{n,3/2}\lV\phi\rV_{\frac{2n}{n-3}}^2 + 3C_{n,1/2}\lV\psi\rV_{\frac{2n}{n-1}}^2 \\
   & \leq \int_{B^{n+1}} \lv\nabla\Delta u\rv^2 + \oint_{\partial B^{n+1}} \biggl\{ \frac{n-9}{2}\psi^2 + 8\lp\onabla\psi,\onabla\phi\rp + 2(n^2-9)\psi\phi \\
   & \quad - \frac{4(n-3)}{3}\lp\onabla\psi,\onabla f\rp - \frac{(n-3)(n-5)(n+3)}{3}f\psi + 8(n-3)\phi^2 \\
   & \quad + \frac{16}{3}(\oDelta\phi)(\oDelta f) + \frac{8(n^2-4n-3)}{3}\lp\onabla\phi,\onabla f\rp \\
   & \quad + \frac{(n-5)(n-3)^2(n+3)}{3}\phi f + \frac{8(n+3)}{9}(\oDelta f)^2 \\
   & \quad + \frac{4(n^3+n^2-21n-9)}{9}\lv\onabla f\rv^2 + \frac{(n-5)(n-3)(n+3)(n^2+4n-9)}{18}f^2 \biggr\} ,
 \end{align*}
 where
 \begin{align*}
  f & = u\rv_{\partial B^{n+1}}, \\
  \phi & = \left.\frac{\partial u}{\partial r}\right|_{r=1} + \frac{n-5}{2}f, \\
  \psi & = \left.\frac{\partial^2u}{\partial r^2}\right|_{r=1} + (n-4)\phi - \frac{1}{3}\oDelta f - \frac{(n-5)(n-6)}{6}f
 \end{align*}
 for $r$ the distance to $0\in B^{n+1}$.  Moreover, equality holds if and only if $\Delta^3u=0$ and there are constants $a_1,a_2,a_3\in\bR$ and points $x_1,x_2,x_3\in\Int(B^{n+1})$ such that
 \begin{equation}
  \label{eqn:Lp-trace-disk-rigidity}
  \begin{split}
   f(x) & = a_1\left(1+x\cdot x_1\right)^{-\frac{n-5}{2}}, \\
   \phi(x) & = a_2\left(1 + x\cdot x_2\right)^{-\frac{n-3}{2}}, \\
   \psi(x) & = a_3\left(1 + x\cdot x_3\right)^{-\frac{n-1}{2}}
  \end{split}
 \end{equation}
 for all $x\in\partial B^{n+1}$.
\end{cor}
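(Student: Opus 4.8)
The plan is to apply Theorem~\ref{thm:L2-trace} to the flat closed unit ball, make both sides of \eqref{eqn:L2-trace} completely explicit, and then insert the sharp Sobolev inequalities for the intertwining operators on the round sphere. The ball $(B^{n+1},|dx|^2)$ is a compactification of hyperbolic space: for the Poincar\'e ball metric $g_{\bH^{n+1}}=\tfrac{4}{(1-|x|^2)^2}|dx|^2$, the function $\rho=\tfrac12(1-|x|^2)$ is a boundary defining function with $|dx|^2=\rho^2 g_{\bH^{n+1}}$, and $g_{\bH^{n+1}}$ is Poincar\'e--Einstein. Since $\lambda_1(-\Delta_{g_{\bH^{n+1}}})=\tfrac{n^2}{4}>\tfrac{n^2-1}{4}$ (equivalently, $S^n$ has positive Yamabe constant; cf.\ the remark after Theorem~\ref{thm:L2-trace}), Theorem~\ref{thm:L2-trace} applies, so \eqref{eqn:L2-trace} holds for every $u\in C^\infty(B^{n+1})$, with equality precisely when $u=u_{f,\phi,\psi}$ solves \eqref{eqn:L6_extension}. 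The conformal infinity is the round unit sphere $S^n=\partial B^{n+1}$ with its induced round metric $g_{S^n}$, so the operators $P_{2\gamma}$, $\gamma\in\{1/2,3/2,5/2\}$, occurring on the right of \eqref{eqn:L2-trace} are the standard intertwining operators on $S^n$, and the boundary measure there is the round one.

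Next I would make $\mE_6(u)=\mQ_6(u,u)$ explicit. Since $P$, $J$, $B$, $T_4$ and $Q_6$ vanish for a flat metric, $L_6=-\Delta^3$ and $\mE_6(u)=-\int_{B^{n+1}}u\,\Delta^3u\,dx+\sum_{j=0}^2\oint_{S^n}B_j^5(u)\,B_{5-j}^5(u)$. Integrating the interior term by parts three times yields $\int_{B^{n+1}}|\nabla\Delta u|^2\,dx$ plus a boundary integral over $S^n$ in the jets $u,\partial_r u,\Delta u,\partial_r\Delta u,\Delta^2u,\partial_r\Delta^2u$. Using the explicit formulas for the $B_j^5$ from Section~\ref{sec:invariant}, specialized to the umbilic boundary $\partial B^{n+1}$ (second fundamental form equal to the induced metric, outward normal $\partial_r$), together with $\Delta=\partial_r^2+\tfrac{n}{r}\partial_r+\tfrac{1}{r^2}\oDelta$, one checks that $B_0^5(u)=u|_{S^n}=f$, that $B_1^5(u)=\partial_r u|_{r=1}+\tfrac{n-5}{2}f=\phi$, and that $B_2^5(u)=\partial_r^2u|_{r=1}+(n-4)\phi-\tfrac13\oDelta f-\tfrac{(n-5)(n-6)}{6}f=\psi$; collecting the remaining boundary terms --- those from the integration by parts and those in $\sum_{j=0}^2B_j^5(u)B_{5-j}^5(u)$ --- and expressing everything through $f$, $\phi$, $\psi$, $\onabla$ and $\oDelta$ should reproduce exactly the boundary integrand displayed in the statement. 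Thus \eqref{eqn:L2-trace} becomes the claimed inequality, and equality there holds exactly when $u=u_{f,\phi,\psi}$, i.e.\ when $\Delta^3u=0$ and the boundary conditions of \eqref{eqn:L6_extension} hold.

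It then remains to bound the right-hand side of \eqref{eqn:L2-trace} from below. On $(S^n,g_{S^n})$ the intertwining operator $P_{2\gamma}$ satisfies the sharp Sobolev inequality $\oint_{S^n}f\,P_{2\gamma}f\geq C_{n,\gamma}\lV f\rV_{\frac{2n}{n-2\gamma}}^2$ with $C_{n,\gamma}$ as in \eqref{eqn:sobolev_constants}, and equality holds exactly when $f$ is a constant multiple of $x\mapsto(1+x\cdot p)^{-\frac{n-2\gamma}{2}}$ for some $p\in\Int(B^{n+1})$, these being the Jacobian factors of the M\"obius group of $S^n$ in the ball model. Applying this with $\gamma=5/2,3/2,1/2$ to $f$, $\phi$, $\psi$ respectively and substituting into \eqref{eqn:L2-trace} gives the asserted norm inequality. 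Equality forces simultaneously equality in \eqref{eqn:L2-trace}, hence $\Delta^3u=0$ and $u=u_{f,\phi,\psi}$, and equality in each of the three sphere inequalities, hence $f$, $\phi$, $\psi$ of the form \eqref{eqn:Lp-trace-disk-rigidity}; conversely, any $u$ with $\Delta^3u=0$ whose boundary data have that form is the unique extension of those data, so equality holds.

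The genuinely laborious step is the boundary computation in the second paragraph: writing out the $B_j^5$ on the nontotally geodesic umbilic sphere and tracking every boundary term through the triple integration by parts. Because $\partial B^{n+1}$ has nonzero constant mean curvature, many lower-order contributions appear, and one must repeatedly commute $\partial_r$ past $\oDelta$ and invoke the attendant curvature identities on the round metric before the accumulated coefficients match the displayed expression. One could instead deduce Corollary~\ref{cor:Lp-trace-disk} from the upper-half-space Corollary~\ref{cor:Lp-trace-upper} through the Cayley transform, using that both sides of \eqref{eqn:L2-trace} are conformally invariant --- the interior term, each pairing $B_j^5B_{5-j}^5$, and each $\oint_{S^n}f\,P_{2\gamma}f$ carry total conformal weight zero --- but this merely trades the interior computation for the comparably tedious task of transporting the flat boundary integrand onto $S^n$, so the direct route seems preferable.
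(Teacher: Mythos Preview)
Your proposal is correct and follows essentially the same route as the paper: both identify the ball as a compactification of the Poincar\'e model of hyperbolic space, reduce to the sharp fractional Sobolev inequalities on the round sphere (the paper packages this step as Proposition~\ref{prop:Lp-trace-subcritical}, you invoke Theorem~\ref{thm:L2-trace} plus Beckner directly), and then carry out the same laborious boundary computation---writing out the $B_j^5$ for the flat ball with $H=n$, integrating $-\int u\,\Delta^3u$ by parts, and rewriting everything in terms of $f,\phi,\psi$. Your identification of the computational bottleneck is accurate, and your verification of $B_0^5,B_1^5,B_2^5$ against the stated $f,\phi,\psi$ is exactly what the paper does.
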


\begin{cor}
 \label{cor:Lp-trace-hemisphere}
 Let $S_+^{n+1}$ denote the closed upper hemisphere
 \[ S_+^{n+1} = \left\{ x=(x_0,\dotsc,x_{n+1}) \in \bR^{n+2} \suchthat \lv x\rv=1, x_{n+1}\geq0 \right\} \]
 equipped with the round metric induced by the Euclidean metric on $\bR^{n+2}$.  For all $u\in C^\infty(S_+^{n+1})$, it holds that
 \begin{align*}
  \MoveEqLeft[1] \frac{8}{3}C_{n,5/2}\lV f\rV_{\frac{2n}{n-5}}^2 + 8C_{n,3/2}\lV\phi\rV_{\frac{2n}{n-3}}^2 + 3C_{n,1/2}\lV\psi\rV_{\frac{2n}{n-1}}^2 \\
  & \leq \int_{S_+^{n+1}} \biggl\{ \lv\nabla\Delta u\rv^2 + \frac{3n^2-35}{4}(\Delta u)^2 + \frac{3n^4-70n^2+259}{16}\lv\nabla u\rv^2 + \frac{\Gamma\bigl(\frac{n+7}{2}\bigr)}{\Gamma\bigl(\frac{n-5}{2}\bigr)}u^2 \biggr\} \\
   & \quad + \oint_{\partial S_+^{n+1}} \biggl\{ 8\lp\onabla\psi,\onabla\phi\rp + \frac{3n^2-8n+13}{2}\psi\phi + \frac{16}{3}(\oDelta f)(\oDelta\phi) \\
   & \qquad\qquad + \frac{2(5n^2-8n-37)}{3}\lp\onabla f,\onabla\phi\rp + \frac{(n-3)(n-5)(3n^2+4n-11)}{12}f\phi \biggr\} ,
 \end{align*}
 where
 \begin{align*}
  f & = u\rv_{\partial S_+^{n+1}}, \\
  \phi & = \eta u, \\
  \psi & = \Delta u - \frac{4}{3}\oDelta f + \frac{(n-3)(n-5)}{12}f
 \end{align*}
 for $\eta=-\partial_{n+1}$ the outward-pointing unit normal along $\partial S_+^{n+1}$.  Moreover, equality holds if and only if
 \[ \left(-\Delta + \frac{(n+1)(n-1)}{4}\right)\left(-\Delta + \frac{(n+3)(n-3)}{4}\right)\left(-\Delta + \frac{(n+5)(n-5)}{4}\right)u = 0 \]
 and there are constants $a_1,a_2,a_3\in\bR$ and points
 \[ \xi_1,\xi_2,\xi_3 \in \left\{ x=(x_0,\dotsc,x_{n+1})\in\bR^{n+2} \suchthat \lv x\rv<1, x_{n+1}=0 \right\} \]
 such that
 \begin{equation}
  \label{eqn:Lp-trace-hemisphere-rigidity}
  \begin{split}
   f(x) & = a_1\left(1+x\cdot \xi_1\right)^{-\frac{n-5}{2}}, \\
   \phi(x) & = a_2\left(1 + x\cdot \xi_2\right)^{-\frac{n-3}{2}}, \\
   \psi(x) & = a_3\left(1 + x\cdot \xi_3\right)^{-\frac{n-1}{2}}
  \end{split}
 \end{equation}
 for all $x\in\partial S_{+}^{n+1}$.
\end{cor}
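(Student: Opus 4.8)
The plan is to recognize Corollary~\ref{cor:Lp-trace-hemisphere} as the special case $(X^{n+1},g)=(S_+^{n+1},g_{\mathrm{round}})$ of Theorem~\ref{thm:L2-trace}, combined with the sharp fractional Sobolev inequality on the round sphere. First I would check the hypotheses of Theorem~\ref{thm:L2-trace}. Stereographic projection from a point of the equator identifies $\Int S_+^{n+1}$ conformally with the upper half-space model of hyperbolic space $\bH^{n+1}$; thus $(S_+^{n+1},g_{\mathrm{round}})$ is a compactification of the Poincar\'e--Einstein manifold $(\bH^{n+1},S^n,g_{\bH})$, with conformal infinity the round sphere $(S^n,[g_{S^n}])$. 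Since $\lambda_1(-\Delta_{\bH^{n+1}})=\tfrac{n^2}{4}>\tfrac{n^2-1}{4}$ (equivalently, because $S^n$ has positive Yamabe constant, by the remark following Theorem~\ref{thm:L2-trace}), the spectral hypothesis holds, and the equator $\partial S_+^{n+1}=S^n$ is totally geodesic, so all of its extrinsic invariants vanish. Given an arbitrary $u\in C^\infty(S_+^{n+1})$, I set $f=B_0^5(u)$, $\phi=B_1^5(u)$, $\psi=B_2^5(u)$, so that $u\in\mC_{f,\phi,\psi}$ and Theorem~\ref{thm:L2-trace} applies.

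The next step is to make $\mE_6(u)$ completely explicit on this background. Because $g_{\mathrm{round}}$ is Einstein, with $P=\tfrac12 g$, the factorization of GJMS operators at Einstein metrics~\cite{FeffermanGraham2012,Gover2006q} gives
\[ L_6 = \Bigl(-\Delta+\tfrac{(n-1)(n+1)}{4}\Bigr)\Bigl(-\Delta+\tfrac{(n-3)(n+3)}{4}\Bigr)\Bigl(-\Delta+\tfrac{(n-5)(n+5)}{4}\Bigr). \]
Expanding this product and integrating by parts three times writes $\int_{S_+^{n+1}}u\,L_6u$ as the interior integral in the statement plus a boundary integral; here the interior coefficients $\tfrac{3n^2-35}{4}$, $\tfrac{3n^4-70n^2+259}{16}$ and $\tfrac{\Gamma((n+7)/2)}{\Gamma((n-5)/2)}$ of $(\Delta u)^2$, $\lv\nabla u\rv^2$, $u^2$ are exactly the elementary symmetric polynomials in $\tfrac{(n-1)(n+1)}{4}$, $\tfrac{(n-3)(n+3)}{4}$, $\tfrac{(n-5)(n+5)}{4}$. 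At the same time I would specialize the formulas for $B_0^5,\dots,B_5^5$ from Section~\ref{sec:invariant} to the totally geodesic equator, where the vanishing extrinsic geometry collapses the ``$\lots$'' terms; this gives $f=u\rv_{S^n}$, $\phi=\eta u$, $\psi=\Delta u-\tfrac43\oDelta f+\tfrac{(n-3)(n-5)}{12}f$, together with explicit local expressions for $B_3^5,B_4^5,B_5^5$. Adding the boundary terms produced by the integrations by parts to $\sum_{j=0}^2\oint_{S^n}B_j^5(u)\,B_{5-j}^5(u)$ and simplifying should collapse everything to the boundary integrand displayed in the statement, so that $\mE_6(u)$ equals the entire right-hand side of the asserted inequality.

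To finish, Theorem~\ref{thm:L2-trace} gives $\mE_6(u)\ge\oint_{S^n}\bigl(\tfrac83 f\,P_5f+8\phi\,P_3\phi+3\psi\,P_1\psi\bigr)\,\dvol_{g_{S^n}}$. On the round sphere each $P_{2\gamma}$ is the standard intertwining operator with leading term $(-\Delta_{S^n})^\gamma$ and $P_{2\gamma}1=\tfrac{\Gamma(n/2+\gamma)}{\Gamma(n/2-\gamma)}$, and the sharp fractional Sobolev (Beckner) inequality states that $\oint_{S^n}h\,P_{2\gamma}h\ge C_{n,\gamma}\lV h\rV_{\frac{2n}{n-2\gamma}}^2$, with $C_{n,\gamma}$ as in~\eqref{eqn:sobolev_constants}, equality holding precisely when $h$ is a real multiple of $x\mapsto(1+x\cdot\xi)^{-\frac{n-2\gamma}{2}}$ for some $\lv\xi\rv<1$. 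Applying this with $\gamma\in\{5/2,3/2,1/2\}$ to $h\in\{f,\phi,\psi\}$ and chaining the two inequalities yields the claimed estimate. For the rigidity statement, equality in the chain forces equality in~\eqref{eqn:L2-trace}, which by Theorem~\ref{thm:L2-trace} means that $u$ is the unique solution of~\eqref{eqn:L6_extension} with data $(f,\phi,\psi)$ --- equivalently $L_6u=0$, i.e.\ the displayed factored equation holds --- together with equality in each of the three Beckner inequalities, which forces $f,\phi,\psi$ to have the form~\eqref{eqn:Lp-trace-hemisphere-rigidity}; conversely, these two conditions give equality throughout.

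I expect the main obstacle to be the bookkeeping in the second step: performing the three integrations by parts of the factored $L_6$, specializing the rather involved boundary operators of Section~\ref{sec:invariant} to the round hemisphere, and verifying that the resulting boundary contributions combine with $\sum_{j=0}^2\oint_{S^n}B_j^5(u)\,B_{5-j}^5(u)$ to reproduce exactly the boundary integrand in the statement. A cleaner alternative worth recording is to deduce Corollary~\ref{cor:Lp-trace-hemisphere} from Corollary~\ref{cor:Lp-trace-disk} or Corollary~\ref{cor:Lp-trace-upper}: the closed ball $B^{n+1}$, the closed upper half-space $\bR_+^{n+1}$ and the hemisphere $S_+^{n+1}$ are three compactifications of $\bH^{n+1}$ with the same conformal infinity, so the conformal covariance of $L_6$, of the $B_j^5$, and of the $P_{2\gamma}$ shows that all three corollaries are the single inequality $\mE_6(u)\ge\oint_{S^n}\bigl(\tfrac83 fP_5f+8\phi P_3\phi+3\psi P_1\psi\bigr)$ together with the sharp Sobolev inequality on $S^n$, written in three different conformal gauges.
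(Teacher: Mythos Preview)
Your proposal is correct and follows essentially the same route as the paper: the paper invokes Proposition~\ref{prop:Lp-trace-subcritical} (which packages Theorem~\ref{thm:genl-L2-trace} together with the Beckner inequality~\eqref{eqn:sobolev-sphere}) and then carries out exactly the computation you describe---factoring $L_6$ on the Einstein hemisphere, specializing the $B_j^5$ to the totally geodesic equator with $\oP=\tfrac12\og$, and integrating by parts to put $\mE_6(u)$ in the displayed form. Your alternative of transporting Corollary~\ref{cor:Lp-trace-disk} or Corollary~\ref{cor:Lp-trace-upper} by conformal covariance is also valid, though the paper chooses to compute directly in each model.
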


Corollary~\ref{cor:Lp-trace-upper}, Corollary~\ref{cor:Lp-trace-disk} and Corollary~\ref{cor:Lp-trace-hemisphere} are equivalent by stereographic projection, though it is useful to have them written out explicitly in all cases.  Since sharp Sobolev (trace) inequalities are useful when studying (boundary) Yamabe problems (e.g.\ \cite{Aubin1976,Escobar1992a,GonzalezQing2010}), we expect these corollaries to find applications in studies of the higher-order fractional Yamabe problem (cf.\ \cite{GonzalezQing2010}).

One replacement of the embedding $W^{k,2}(M^n)\hookrightarrow L^{\frac{2n}{n-2k}}(M^n)$ in the critical case $n=2k$ is the Orlicz embedding $W^{k,2}(M^{2k})\hookrightarrow e^L(M^{2k})$.  There is a sharp Onofri-type inequality~\cite{Beckner1993} which establishes this embedding.  The critical cases of Corollary~\ref{cor:Lp-trace-upper}, Corollary~\ref{cor:Lp-trace-disk} and Corollary~\ref{cor:Lp-trace-hemisphere} are as follows:

\begin{cor}
 \label{cor:Lp-trace-critical-upper}
 Let $(\bR_+^6,dx^2+dy^2)$ denote the closed upper half space.  For all $u\in C^\infty(\bR_+^6)\cap W^{3,2}(\bR_+^6)$, it holds that
 \begin{multline*}
  3C_{5,1/2}\lV\psi\rV_{5/2}^2 + 8C_{5,3/2}\lV\phi\rV_5^2 + \frac{128}{5}\Vol(S^5)\ln\oint_{\partial\bR_+^6} e^{5(f-\bar f)}d\mu \\ \leq \int_{\bR_+^6}\lv\nabla\Delta u\rv^2 + \oint_{\partial\bR_+^6} \left\{ 8\lp\onabla\psi,\onabla\phi\rp + \frac{16}{3}(\oDelta\phi)(\oDelta f) \right\} ,
 \end{multline*}
 where $d\mu=\frac{1}{\Vol(S^5)}\bigl(\frac{1+\lv x\rv^2}{2}\bigr)^{-5}\dvol_{dx^2}$,
 \begin{align*}
  f(x) & = u(x,0), \\
  \phi(x) & = -\frac{\partial u}{\partial y}(x,0), \\
  \psi(x) & = \frac{\partial^2u}{\partial y^2}(x,0) - \frac{1}{3}\oDelta u(x,0)
 \end{align*}
 for all $x\in\bR^5=\partial\bR_+^6$, the $L^p$-norms on the left-hand side are taken with respect to the Riemannian volume element of $(\bR^5,dx^2)$, and $\of:=(\oint f\,d\mu)/(\oint d\mu)$ is the average of $f$ with respect to $d\mu$.  Moreover, equality holds if and only if $\Delta^3u=0$ and there are points $x_1,x_2,x_3\in\bR^5$, constants $a_1,a_2,a_3\in\bR$, and positive constants $\varepsilon_1,\varepsilon_2,\varepsilon_3\in\bR$ such that
 \begin{align*}
  f(x) & = a_1 - \ln\left(\varepsilon_1 +\lv x-x_1\rv^2\right) + \ln\left(1+\lv x\rv^2\right) , \\
  \phi(x) & = a_2\left(\varepsilon_2 + \lv x-x_2\rv^2\right)^{-1} , \\
  \psi(x) & = a_3\left(\varepsilon_3 + \lv x-x_3\rv^2\right)^{-2}
 \end{align*}
 for all $x\in\bR^5$.
\end{cor}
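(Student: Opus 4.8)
The plan is to prove Corollary~\ref{cor:Lp-trace-critical-upper} as the critical ($n=5$) analogue of Corollary~\ref{cor:Lp-trace-upper}: the proof of the latter carries over word for word for the $P_1$- and $P_3$-contributions, and only the step invoking the sharp Sobolev inequality for the fractional GJMS operator $P_5$ of order $5$ --- whose sharp constant $C_{n,5/2}$ degenerates as $n\downarrow5$ --- must be replaced, namely by Beckner's sharp Moser--Trudinger--Onofri inequality for the critical GJMS operator. Concretely, I would first note that $(\bR_+^6,dx^2+dy^2)$ is a compactification of the Poincar\'e--Einstein manifold $(\bH^6,\bR^5,g_+)$, with $g_+=y^{-2}(dx^2+dy^2)$ the upper half space model of real hyperbolic $6$-space, and that $\lambda_1(-\Delta_{g_+})=\frac{25}{4}>6=\frac{5^2-1}{4}$ (equivalently, the round $S^5$ has positive Yamabe constant, cf.\ \cite{Lee1995}), so Theorem~\ref{thm:L2-trace} applies; since $\bR_+^6$ is noncompact I would run the argument on the closed hemisphere $S_+^6$ and transport the conclusion to $\bR_+^6$ by stereographic projection from a point of $\partial S_+^6$, using density of compactly supported functions together with the decay built into $W^{3,2}(\bR_+^6)$ to absorb the resulting puncture. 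Theorem~\ref{thm:L2-trace} then yields
\[
 \mE_6(u)\ \geq\ \oint_{\bR^5}\Bigl(\tfrac83\,f\,P_5f+8\,\phi\,P_3\phi+3\,\psi\,P_1\psi\Bigr)\,\dvol_{dx^2},
\]
where $f,\phi,\psi$ are the traces displayed in the statement and, exactly as in the proof of Corollary~\ref{cor:Lp-trace-upper} specialised to $n=5$, the left side $\mE_6(u)$ equals the interior-plus-boundary integral on the right side of the asserted inequality.

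It then remains to bound the three boundary terms below. For the $P_3$- and $P_1$-terms I would reuse verbatim the argument from the proof of Corollary~\ref{cor:Lp-trace-upper}: the sharp fractional Sobolev inequalities on $\bR^5$ give $\oint_{\bR^5}\phi\,P_3\phi\geq C_{5,3/2}\lV\phi\rV_5^2$ and $\oint_{\bR^5}\psi\,P_1\psi\geq C_{5,1/2}\lV\psi\rV_{5/2}^2$ with $C_{5,\gamma}$ as in~\eqref{eqn:sobolev_constants}, with equality precisely for the bubbles $\phi=a_2(\varepsilon_2+\lv x-x_2\rv^2)^{-1}$, $\psi=a_3(\varepsilon_3+\lv x-x_3\rv^2)^{-2}$. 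For the critical term, $P_5$ is the critical GJMS operator on $\bR^5$, which under stereographic projection corresponds to the critical (Branson intertwining) GJMS operator $P_5$ on the round $(S^5,g_0)$; for the latter, Beckner's sharp inequality~\cite{Beckner1993} reads
\[
 \oint_{S^5}\varphi\,P_5\varphi\ \geq\ \frac{2\,Q_5^{S^5}\Vol(S^5)}{5}\,\ln\biggl(\frac{1}{\Vol(S^5)}\oint_{S^5}e^{5(\varphi-\overline{\varphi})}\biggr),\qquad Q_5^{S^5}=4!=24,
\]
where $\overline{\varphi}$ is the average of $\varphi$ over $(S^5,g_0)$ and equality holds iff $e^{2\varphi}g_0$ is the pullback of $g_0$ by a conformal diffeomorphism of $S^5$. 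Because the critical GJMS operator has conformal bidegree $(0,5)$, the conformal factor of stereographic projection cancels against the volume form, so that $\oint_{S^5}\varphi\,P_5\varphi\,\dvol_{g_0}=\oint_{\bR^5}f\,P_5f\,\dvol_{dx^2}$ and $\frac{1}{\Vol(S^5)}\oint_{S^5}e^{5(\varphi-\overline{\varphi})}\,\dvol_{g_0}=\oint_{\bR^5}e^{5(f-\bar f)}\,d\mu$ with $d\mu$ the normalised pushforward of $\dvol_{g_0}$ recorded in the statement, and the extremal transports to $f(x)=a_1-\ln(\varepsilon_1+\lv x-x_1\rv^2)+\ln(1+\lv x\rv^2)$. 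Multiplying by $\tfrac83$ and using $\tfrac83\cdot\tfrac{2\cdot24}{5}=\tfrac{128}{5}$ gives $\tfrac83\oint_{\bR^5}f\,P_5f\geq\tfrac{128}{5}\Vol(S^5)\ln\oint_{\bR^5}e^{5(f-\bar f)}\,d\mu$, and adding the three estimates produces the claimed inequality.

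For the equality statement, equality forces equality in Theorem~\ref{thm:L2-trace} and in each of the three boundary inequalities. The first means $u$ is the unique solution of~\eqref{eqn:L6_extension} with data $(f,\phi,\psi)$, which, since $L_6=-\Delta^3$ on flat $\bR_+^6$, is precisely $\Delta^3u=0$; the other three pin $f$, $\phi$, $\psi$ --- which are independent, freely prescribable boundary data --- to the asserted forms. Conversely, if $\Delta^3u=0$ and $f,\phi,\psi$ have those forms, then uniqueness in~\eqref{eqn:L6_extension} (Proposition~\ref{prop:dirichlet-to-neumann-operators}) identifies $u$ with that extension, whence all inequalities used are equalities.

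I expect the main obstacle to be the constant bookkeeping and the compact-to-noncompact passage, not the structure. One must verify that the flat $P_5=(-\Delta)^{5/2}$ corresponds under stereographic projection to the critical intertwining operator on $S^5$ with the same normalisation under which $C_{5,\gamma}$ is the sharp Sobolev constant, confirm $Q_5^{S^5}=24$ so that the coefficient $\tfrac{128}{5}\Vol(S^5)$ comes out exactly, and check that the $\mE_6$-identity of Corollary~\ref{cor:Lp-trace-upper} and the functional $\ln\oint e^{5(\cdot)}\,d\mu$ transform correctly under stereographic projection --- in particular that the conformal factor is fully absorbed and that the puncture at the image of $\partial S_+^6$ is harmless given the $W^{3,2}$-hypothesis. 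Identifying the extremal $f$ then amounts to the observation that the conformal factors of the conformal diffeomorphisms of $S^5$, pulled back to $\bR^5$ by stereographic projection, are exactly the functions $a_1-\ln(\varepsilon_1+\lv x-x_1\rv^2)+\ln(1+\lv x\rv^2)$.
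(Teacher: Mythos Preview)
Your proposal is correct and follows essentially the same route as the paper: compute $\mE_6$ in the flat upper half space exactly as in the proof of Corollary~\ref{cor:Lp-trace-upper}, apply Theorem~\ref{thm:L2-trace} to reduce to the three boundary terms, bound the $P_1$- and $P_3$-terms by the sharp fractional Sobolev inequalities, and replace the degenerate $P_5$-Sobolev step by Beckner's Onofri inequality transported from $S^5$ to $\bR^5$ via stereographic projection. The paper packages the last step as an appeal to Proposition~\ref{prop:Lp-trace-critical} after writing down the transported Onofri inequality, whereas you spell out the constant $\tfrac{8}{3}\cdot\tfrac{2\cdot 4!}{5}=\tfrac{128}{5}$ and the conformal bookkeeping directly; you are also more explicit than the paper about the noncompactness issue (working on $S_+^6$ and transporting), which the paper handles tacitly.
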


\begin{cor}
 \label{cor:Lp-trace-critical-disk}
 Let $(B^6,dx^2)$ denote the closed Euclidean unit ball.  For all $u\in C^\infty(B^6)$, it holds that
 \begin{align*}
  \MoveEqLeft[2] 3C_{5,1/2}\lV\psi\rV_{5/2}^2 + 8C_{5,3/2}\lV\phi\rV_5^2 + \frac{128}{5}\Vol(S^5)\ln\oint_{\partial B^6} e^{5(f-\of)}\,d\mu \\
   & \leq \int_{B^6} \lv\nabla\Delta u\rv^2 + \oint_{\partial B^6} \biggl\{ -2\psi^2 + 8\lp\onabla\psi,\onabla\phi\rp + 32\psi\phi - \frac{8}{3}\lp\onabla\psi,\onabla f\rp  \\
   & \qquad\qquad + 16\phi^2 + \frac{16}{3}(\oDelta\phi)(\oDelta f) + \frac{16}{3}\lp\onabla\phi,\onabla f\rp + \frac{64}{9}(\oDelta f)^2 + 16\lv\onabla f\rv^2 \biggr\} ,
 \end{align*}
 where $d\mu:=\frac{1}{\Vol(S^5)}\dvol_{d\theta^2}$ is the probability measure on $\partial B^6$ induced by $dx^2$,
 \begin{align*}
  f & = u\rv_{\partial B^6}, \\
  \phi & = \left.\frac{\partial u}{\partial r}\right|_{r=1}, \\
  \psi & = \left.\frac{\partial^2u}{\partial r^2}\right|_{r=1} + \phi - \frac{1}{3}\oDelta f
 \end{align*}
 for $r$ the distance to $0\in B^6$, the $L^p$-norms on the right-hand side are taken with respect to the Riemannian volume element $\dvol_{d\theta^2}$ of $(S^5,d\theta^2)$, and $\of:=(\oint f)/(\oint 1)$ is the average of $f$ with respect to $\dvol_{d\theta^2}$.  Moreover, equality holds if and only if there are constants $a_1,a_2,a_3\in\bR$ and points $x_1,x_2,x_3\in\Int(B^6)$ such that
 \begin{align*}
  f(x) & = a_1 - \ln\left( 1 + x\cdot x_1\right) , \\
  \phi(x) & = a_2\left(1 + x\cdot x_2\right)^{-1}, \\
  \psi(x) & = a_3\left(1 + x\cdot x_3\right)^{-2}
 \end{align*}
 for all $x\in\partial B^6$.
\end{cor}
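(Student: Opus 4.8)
The plan is to obtain this as the critical ($n\to5$) limit of Corollary~\ref{cor:Lp-trace-disk}, combined with the sharp Beckner--Onofri inequality on $S^5$ in place of the subcritical sharp Sobolev inequality on $M^n$. First I would recall the setup from Section~\ref{sec:traces}: by Theorem~\ref{thm:L2-trace} applied to the ball $B^{n+1}$ (a compactification of hyperbolic space $\bH^{n+1}$, whose conformal infinity is the round $S^n$ with positive Yamabe constant, so the spectral hypothesis $\lambda_1(-\Delta_{g_+})>\tfrac{n^2-1}{4}$ is automatic by~\cite{Lee1995}), one has $\mE_6(u)\geq\oint_{S^n}\bigl(\tfrac83 fP_5f+8\phi P_3\phi+3\psi P_1\psi\bigr)$, where the fractional GJMS operators $P_{2\gamma}$ for the round sphere are the explicit intertwinors $\prod_{j}\bigl(-\Delta+\tfrac{(n+2j-1)(n-2j+1)}{4}\bigr)$. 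The left-hand side $\mE_6(u)=\mQ_6(u,u)$ is exactly the sum of the interior $W^{3,2}$-seminorm $\int_{B^{n+1}}|\nabla\Delta u|^2$ and the boundary integral written out in Corollary~\ref{cor:Lp-trace-disk}; here one must also unwind that for the round ball the Schouten tensor is a multiple of the metric, so the curvature terms in $L_6$ and in the $B_j^5$ collapse to the constant coefficients displayed. This reduces the corollary to two separate sharp lower bounds for $\oint \phi P_3\phi$ and $\oint\psi P_1\psi$ in terms of $\|\phi\|_{2n/(n-3)}^2$ and $\|\psi\|_{2n/(n-1)}^2$ — which are precisely the sharp fractional Sobolev inequalities on $S^n$ with constants $C_{n,3/2}$, $C_{n,1/2}$ from~\eqref{eqn:sobolev_constants}, with equality exactly on the standard bubbles pulled back to the sphere (equivalently, the conformal factors $(1+x\cdot x_i)^{-(n-3)/2}$, $(1+x\cdot x_i)^{-(n-1)/2}$ after stereographic projection) — together with the critical bound for the $f$-term.

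For the $f$-term, the key point is that as $n\to5$ the quantity $\tfrac83 C_{n,5/2}\|f\|_{2n/(n-5)}^2$ degenerates, and the correct replacement is the Beckner--Onofri inequality on $S^5$: for the conformally covariant operator $P_5$ on $S^5$ (order equal to the dimension, with $Q$-curvature a positive constant), one has, for $f$ with $\oint f=0$,
\[
 \ln\oint_{S^5} e^{5f}\,d\mu \leq \frac{5}{\text{(normalization)}}\oint_{S^5} f\,P_5 f\,d\mu + \text{(linear term that vanishes)},
\]
and tracking the constant $\tfrac{128}{5}\Vol(S^5)$ amounts to computing the leading symbol normalization of $P_5$ on $S^5$ and the value $Q_5 = \tfrac{4!\,\Gamma(\,)}{\,}$ (equivalently, comparing with Beckner's Theorem in~\cite{Beckner1993}). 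Concretely I would: (i) write $f=\bar f+f_0$ with $\oint f_0=0$, observe the $P_5$ quadratic form and all boundary integrals are insensitive to the constant $\bar f$, and reduce to $f_0$; (ii) invoke the Onofri-type inequality on $S^5$ to replace $\tfrac83\oint f_0 P_5 f_0$ by $\tfrac{128}{5}\Vol(S^5)\ln\oint e^{5f_0}d\mu$; (iii) specialize all the $n$-dependent boundary coefficients of Corollary~\ref{cor:Lp-trace-disk} at $n=5$ (so e.g.\ $\tfrac{n-9}{2}\to-2$, $2(n^2-9)\to32$, $-\tfrac{4(n-3)}{3}\to-\tfrac83$, $8(n-3)\to16$, $\tfrac{8(n^2-4n-3)}{3}\to16$, $\tfrac{8(n+3)}{9}\to\tfrac{64}{9}$, $\tfrac{4(n^3+n^2-21n-9)}{9}\to16$, and every coefficient with a factor $(n-5)$ drops out), and similarly specialize the formulas for $\phi$ and $\psi$; (iv) assemble the three pieces. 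For the rigidity statement, equality forces simultaneously equality in the two subcritical sharp Sobolev inequalities for $\phi,\psi$ (classified by the standard bubbles, giving the stated forms of $\phi,\psi$) and equality in the $S^5$ Onofri inequality (classified by Beckner/Onofri as the conformal factors of Möbius transformations of $S^5$, giving $f=a_1-\ln(1+x\cdot x_1)$), together with $u$ being the harmonic-type extension solving~\eqref{eqn:L6_extension}; since on the ball $L_6$ has the factored leading part $\Delta^3$ with the lower-order terms vanishing to the relevant order, one records $\Delta^3 u=0$ is not required here (unlike the half-space case) because the constraint is already captured by the extension being the minimizer.

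The main obstacle I anticipate is \textbf{pinning down the constant $\tfrac{128}{5}\Vol(S^5)$ and the exponent $5$ in the Onofri term}, i.e.\ correctly normalizing the $S^5$ Beckner--Onofri inequality against the specific operator $\tfrac83 P_5$ that appears on the right of Theorem~\ref{thm:L2-trace}. This requires knowing both the precise leading coefficient of the sixth-order GJMS operator's boundary operator pairing (already fixed by Theorem~\ref{thm:boundary_operators}, which is why the $\tfrac83$ is there) and the precise value of the critical $Q$-curvature $Q_5$ on the round $S^5$ together with the eigenvalue product formula for $P_5$; a sign or factor-of-$2$ slip here is the likeliest source of error, and I would cross-check it against the half-space version Corollary~\ref{cor:Lp-trace-critical-upper}, to which this corollary is equivalent by stereographic projection. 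The rest — specializing the polynomial-in-$n$ boundary coefficients at $n=5$ and quoting the classical sharp inequalities for $P_1,P_3$ — is routine.
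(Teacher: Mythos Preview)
Your approach is essentially the paper's: specialize the computation of $\mE_6$ on the ball from Corollary~\ref{cor:Lp-trace-disk} to $n=5$, then apply Theorem~\ref{thm:L2-trace} together with the sharp Sobolev inequalities for $P_1,P_3$ and Beckner's Onofri inequality for $P_5$ on $S^5$ (the paper bundles this last step as Proposition~\ref{prop:Lp-trace-critical}). One arithmetic slip: $\tfrac{8(n^2-4n-3)}{3}\big|_{n=5}=\tfrac{16}{3}$, not $16$; and your remark that ``$\Delta^3u=0$ is not required'' is off --- equality in Proposition~\ref{prop:Lp-trace-critical} does require $L_6u=0$, it is simply omitted (likely inadvertently) from the statement of the corollary.
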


\begin{cor}
 \label{cor:Lp-trace-critical-hemisphere}
 Let $(S_+^6,d\theta^2)$ denote the closed hemisphere.  For all $u\in C^\infty(S_+^6)$, it holds that
 \begin{align*}
  \MoveEqLeft[2] 3C_{5,1/2}\lV\psi\rV_{5/2}^2 + 8C_{5,3/2}\lV\phi\rV_5^2 + \frac{128}{5}\Vol(S^5)\ln\oint_{\partial S_+^6} e^{5(f-\of)}\,d\mu \\
   & \leq \int_{S_+^6} \left\{ \lv\nabla\Delta u\rv^2 + 10(\Delta u)^2 + 24\lv\nabla u\rv^2\right\} \\
   & \quad + \oint_{\partial S_+^6} \left\{ 8\lp\onabla\psi,\onabla\phi\rp + 24\psi\phi + \frac{16}{3}(\oDelta\phi)(\oDelta f) + 32\lp\onabla f,\onabla\phi\rp \right\} ,
 \end{align*}
 where $d\mu:=\frac{1}{\Vol(S^5)}\dvol$ is the probability measure on $\partial S_+^6$ induced by $d\theta^2$,
 \begin{align*}
  f & = u\rv_{\partial S_+^6}, \\
  \phi & = \eta u, \\
  \psi & = \Delta u - \frac{4}{3}\oDelta f
 \end{align*}
 for $\eta = -\partial_6$ the outward-pointing normal along $\partial S_+^6$.  Moreover, equality holds if and only if
 \[ \left(-\Delta + 6\right)\left(-\Delta + 4\right)\left(-\Delta\right)u = 0 \]
 and there are constants $a_1,a_2,a_3\in\bR$ and points
 \[ \xi_1,\xi_2,\xi_3 \in \left\{ x=(x_0,\dotsc,x_{n+1})\in\bR^{n+2}\suchthat \lv x\rv < 1, x_{n+1} = 0 \right\} \]
 such that
 \begin{align*}
  f(x) & = a_1 - \ln\left( 1 + x\cdot \xi_1\right) , \\
  \phi(x) & = a_2\left(1 + x\cdot \xi_2\right)^{-1}, \\
  \psi(x) & = a_3\left(1 + x\cdot \xi_3\right)^{-2}
 \end{align*}
 for all $x\in\partial S_+^6$.
\end{cor}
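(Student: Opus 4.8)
The plan is to obtain Corollary~\ref{cor:Lp-trace-critical-hemisphere} from the sharp interior estimate of Theorem~\ref{thm:L2-trace} together with three sharp inequalities on the equatorial $5$-sphere: the sharp \emph{fractional} Sobolev inequalities for $P_1$ and $P_3$, and --- since the $L^p$ Sobolev inequality for $P_5$ degenerates in the critical dimension $n=5$ --- the sharp Beckner--Onofri inequality for the critical GJMS operator $P_5$. Since the three critical corollaries are conformally equivalent, it is enough to treat the hemisphere and then transport the statement by stereographic projection and the conformal covariance of $L_6$ and of the $B_j^5$, exactly as Corollary~\ref{cor:Lp-trace-upper} and Corollary~\ref{cor:Lp-trace-disk} are deduced from Corollary~\ref{cor:Lp-trace-hemisphere}.

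First I would check that $(S_+^6,d\theta^2)$ is a compactification of the Poincar\'e--Einstein manifold $(\bH^6,S^5,g_{\bH^6})$ and that $\lambda_1(-\Delta_{g_{\bH^6}})=\tfrac{25}{4}>6=\tfrac{n^2-1}{4}$ for $n=5$, so that Theorem~\ref{thm:L2-trace} applies. Next I would specialize the computation behind Corollary~\ref{cor:Lp-trace-hemisphere} to $n=5$: using the formula~\eqref{eqn:L6} for $L_6$, which restricts on the round $S^6$ to $(-\Delta+6)(-\Delta+4)(-\Delta)$, and the explicit boundary operators of Theorem~\ref{thm:boundary_operators} (Section~\ref{sec:invariant}) evaluated along the totally geodesic equator, one identifies $\mE_6(u)=\mQ_6(u,u)$ with the interior-plus-boundary integral on the right-hand side of the asserted inequality, where $(f,\phi,\psi)=(B_0^5(u),B_1^5(u),B_2^5(u))$ are as in the statement. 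Theorem~\ref{thm:L2-trace} then yields
\[
 \mE_6(u)\ \geq\ \oint_{\partial S_+^6}\Bigl(\tfrac83\,f\,P_5 f+8\,\phi\,P_3\phi+3\,\psi\,P_1\psi\Bigr)\dvol,
\]
with equality if and only if $u$ is the unique solution of~\eqref{eqn:L6_extension} with boundary data $(f,\phi,\psi)$.

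It then remains to bound each summand on the right from below. On $S^5$ the operator $P_{2\gamma}$ acts on the $k$-th eigenspace of $-\Delta_{S^5}$ by the scalar $\Gamma(\tfrac52+\gamma+k)/\Gamma(\tfrac52-\gamma+k)$; for $\gamma\in\{1/2,3/2\}$ one has $2\gamma<5$, so Beckner's sharp fractional Sobolev inequality~\cite{Beckner1993} gives $\oint\phi\,P_3\phi\geq C_{5,3/2}\lV\phi\rV_5^2$ and $\oint\psi\,P_1\psi\geq C_{5,1/2}\lV\psi\rV_{5/2}^2$, with $C_{n,\gamma}$ as in~\eqref{eqn:sobolev_constants} and equality exactly for the bubble families $\phi\propto(1+x\cdot\xi)^{-1}$, $\psi\propto(1+x\cdot\xi)^{-2}$ with $\xi$ in the open unit ball. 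For $\gamma=5/2$, $P_5$ is the critical GJMS operator on $S^5$ --- it annihilates constants --- and Beckner's sharp Onofri-type inequality~\cite{Beckner1993} gives $\oint f\,P_5 f\geq\tfrac{48}{5}\Vol(S^5)\ln\oint e^{5(f-\of)}\,d\mu$ with equality exactly for $f=a-\ln(1+x\cdot\xi)$, $a\in\bR$ and $\xi$ in the open unit ball. Multiplying these three inequalities by $\tfrac83$, $8$, and $3$ respectively (note $\tfrac83\cdot\tfrac{48}{5}=\tfrac{128}{5}$) and summing produces the claimed inequality. For the equality statement, equality must hold in every estimate above, forcing $f,\phi,\psi$ into the three bubble families and, by the equality clause of Theorem~\ref{thm:L2-trace}, $u=u_{f,\phi,\psi}$; since $L_6$ restricts on $S_+^6$ to $(-\Delta+6)(-\Delta+4)(-\Delta)$, the condition $L_6u=0$ is precisely the stated factored equation.

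\textbf{The main obstacle} I expect is twofold. First, one must verify that $\mQ_6(u,u)$ on the round hemisphere has exactly the displayed explicit interior-plus-boundary form; this is a specialization to $n=5$ of the lengthy, $n$-dependent computation behind Corollary~\ref{cor:Lp-trace-hemisphere}, which rests on the explicit formulae for the $B_j^5$ and the extrinsic geometry of the equator. Second, one must pin down the exact constant $\tfrac{48}{5}\Vol(S^5)$ and the equality family in Beckner's critical inequality for $P_5$ on $S^5$ in the normalization used here --- equivalently, identify $P_5$ on $S^5$ with the intertwining operator whose sharp Onofri inequality Beckner established, and track constants through stereographic projection. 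Once these two points are settled, the remaining steps are routine assembly, and the upper-half-space and closed-ball versions follow by conformal equivalence.
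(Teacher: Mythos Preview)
Your proposal is correct and follows essentially the same approach as the paper: the paper packages the combination of Theorem~\ref{thm:L2-trace} with Beckner's sharp fractional Sobolev and Onofri inequalities into an intermediate Proposition~\ref{prop:Lp-trace-critical}, and then deduces Corollary~\ref{cor:Lp-trace-critical-hemisphere} by specializing the $\mE_6$ computation from the proof of Corollary~\ref{cor:Lp-trace-hemisphere} to $n=5$ and invoking that proposition. Your two ``main obstacles'' are exactly the two ingredients the paper supplies --- the explicit $\mE_6$ formula on the hemisphere (obtained by setting $n=5$ in the general-$n$ computation) and Beckner's Onofri constant $\tfrac{2(n-1)!}{n}=\tfrac{48}{5}$ on $S^5$ --- so nothing further is needed.
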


These inequalities generalize the Lebedev--Milin inequality for closed surfaces with boundary~\cite{OsgoodPhillipsSarnak1988} and closed four-manifolds with boundary~\cite{AcheChang2015,Case2015b}.  For this reason, we expect Corollary~\ref{cor:Lp-trace-critical-upper}, Corollary~\ref{cor:Lp-trace-critical-disk} and Corollary~\ref{cor:Lp-trace-critical-hemisphere} to be useful when studying variational problems involving the functional determinant (cf.\ \cite{ChangQing1997b,OsgoodPhillipsSarnak1988}) and the fifth-order fractional $Q$-curvature (cf.\ \cite{AcheChang2015,Case2015b}) on six-manifolds with boundary.

This article is organized as follows:

In Section~\ref{sec:bg} we recall the definitions of Poincar\'e--Einstein manifolds and the fractional GJMS operators, pointing out in particular some useful conformally invariant properties of the boundaries of compactifications of Poincar\'e--Einstein manifolds.  We also recall Branson's method~\cite{Branson1995} for finding conformally covariant operators and give some computational lemmas which are useful in proving the conformal covariance of our boundary operators.

In Section~\ref{sec:invariant} we give the full formulas for our boundary operators and prove Theorem~\ref{thm:boundary_operators}.  We also discuss the pseudodifferential operators they determine on the boundary.

In Section~\ref{sec:poincare} we prove Theorem~\ref{thm:dirichlet-to-neumann}.

In Section~\ref{sec:traces} we study the Sobolev trace embeddings~\eqref{eqn:trace_embedding} and~\eqref{eqn:Lp-trace-embedding} in dimensions at least six.  This includes deriving explicit sharp norm inequalities in the Euclidean upper half space, the Euclidean ball, and the round hemisphere.

\subsection*{Acknowledgments}

JSC was supported by a grant from the Simons Foundation (Grant No.\ 524601).

\section{Background}
\label{sec:bg}

\subsection{Important tensors on Riemannian manifolds with boundary}

We begin by recalling some important tensors defined on a Riemannian manifold $(X^{n+1},g)$.  The \emph{Schouten tensor} of $g$ is
\[ P := \frac{1}{n-1}\left(\Ric - \frac{R}{2n}g\right), \]
where $\Ric$ is the Ricci tensor and $R:=\tr_g\Ric$ is the scalar curvature of $g$.  We denote $J:=\tr_gP$, so that $J=R/2n$ is a constant multiple of the scalar curvature.  The significance of the Schouten tensor comes from the decomposition
\[ \Rm = W + P \wedge g \]
of the Riemann curvature tensor into the totally trace-free \emph{Weyl tensor} $W$ and the Kulkarni--Nomizu product $P\wedge g$ of the Schouten tensor and the metric.  We sometimes use abstract index notation to represent tensors; for example,
\[ (P\wedge g)_{ijkl} := P_{ik}g_{jl} + P_{jl}g_{ik} - P_{il}g_{jk} - P_{jk}g_{il} . \]
We use the metric $g$ to raise and lower indices.  For example, $P_i^j:=g^{jk}P_{ik}$, which is regarded as a section of either $\End(TX)$ or $\End(T^\ast X)$, depending on context.  We denote by $P^2$ the composition of $P$ with itself; i.e.
\[ (P^2)_i^j := P_i^kP_k^j, \]
and similarly for other compositions.  The \emph{Cotton tensor} is
\[ C_{ijk} := \nabla_iP_{jk} - \nabla_jP_{ik} \]
and the \emph{Bach tensor} is
\[ B_{ij} := \nabla^kC_{kij} + W_{ikjl}P^{kl}, \]
where $\nabla$ denotes the Levi-Civita connection of $g$.  Recall that each of $W_{ijkl}$, $C_{ijk}$ and $B_{ij}$ are trace-free.  Finally, the \emph{divergence $\delta$} of a $(0,2)$-tensor field $T_{ij}$ is
\[ (\delta T)_j := \nabla^i T_{ij}, \]
with a similar definition for the divergence of tensor fields of different rank.

Now let $(M^n,\og):=(\partial X,g\rv_{TM})$ denote the boundary of $X$ with the metric induced by $g$.  Riemannian tensors denoted by bars are defined with respect to $\og$; e.g.\ $\oP$ denotes the Schouten tensor of the induced metric on the boundary.  We sometimes use $h$ to denote the induced metric $\og$, though in this case still use bars to denote Riemannian invariants associated to $h$.  We denote by $\eta$ the outward-pointing unit normal along $M$.  The \emph{second fundamental form} of $M$ is defined by
\[ A(X,Y) := g\left(\nabla_X\eta,Y\right) \]
for all sections $X,Y$ of $TM$, and the \emph{mean curvature} of $M$ is $H:=\tr_{\og}A$.  The \emph{trace-free part of the second fundamental form} is $A_0:=A-\frac{1}{n}H\og$.

The following lemma collects some useful and well-known relationships between the extrinsic and intrinsic geometry of the boundary of a Riemannian manifold; see~\cite{Case2015b} for proofs.

\begin{lem}
 \label{lem:gauss-codazzi}
 Let $(X^{n+1},g)$ be a Riemannian manifold with boundary.  Along $M$ it holds that
 \begin{align*}
  P\rv_{TM} & = \oP - \frac{1}{n}HA_0 - \frac{1}{2n^2}H^2\og + \mF, \\
  J & = \oJ + P(\eta,\eta) - \frac{1}{2n}H^2 + \frac{1}{2(n-1)}\lv A_0\rv^2, \\
  P(\eta,\cdot)\rv_{TM} & = -\frac{1}{n}\onabla H + \frac{1}{n-1}\odelta A_0,
 \end{align*}
 where
 \begin{equation}
  \label{eqn:fialkow}
  \mF := \frac{1}{n-2}\left(W(\eta,\cdot,\eta,\cdot) + A_0^2 - \frac{1}{2(n-1)}\lv A_0\rv^2\og\right) .
 \end{equation}
 Moreover, given any $u\in C^\infty(X)$, along $M$ it holds that
 \begin{align*}
  \Delta u & = \nabla^2u(\eta,\eta) + \oDelta u + H\eta u, \\
  \nabla^2u(\eta,\cdot)\rv_{TM} & = \onabla\eta u - A(\onabla u,\cdot) , \\
  \eta\Delta u & = \nabla^3u(\eta,\eta,\eta) + H\nabla^2u(\eta,\eta) + \oDelta\eta u - 2\odelta\left(A(\onabla u)\right) + \lp\onabla H,\onabla u\rp \\
   & \quad - \left(\oJ + nP(\eta,\eta) + \frac{1}{2n}H^2 + \frac{2n-1}{2(n-1)}\lv A_0\rv^2\right)\eta u. 
 \end{align*}
\end{lem}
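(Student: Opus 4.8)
The statement gathers two families of classical comparison identities (all proved, e.g., in~\cite{Case2015b}): the first three express the intrinsic Schouten data $\oP,\oJ$ of the boundary in terms of the ambient Schouten tensor $P$ and the extrinsic invariants $A$, $H$, while the last three compare the ambient Hessian and Laplacian of a function $u$ with their boundary analogues. I would prove them in that order, since the formula for $J$ feeds into the one for $\eta\Delta u$.

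\emph{The curvature identities.} For the first two I would start from the Gauss equation, which in the paper's conventions reads $\overline{\Rm} = \Rm\rv_{TM} + \tfrac12 A\wedge A$ with $\wedge$ the Kulkarni--Nomizu product. Substituting $\Rm = W + P\wedge g$ and tracing once over a tangential pair expresses $\oRic$ in terms of $\Ric\rv_{TM}$, of $W(\eta,\cdot,\eta,\cdot)$, and of $P(\eta,\eta)$; the mechanism is that contracting $P\wedge g$ on its $\eta$-slots yields precisely $P(\eta,\eta)\og + P\rv_{TM}$. Tracing a second time and using that $W$ is totally trace-free, so that $\tr_{\og}W(\eta,\cdot,\eta,\cdot) = 0$, gives $\oR$, hence $\oJ = \tfrac{\oR}{2(n-1)}$. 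I would then insert these into $\oP = \tfrac1{n-2}\bigl(\oRic - \tfrac{\oR}{2(n-1)}\og\bigr)$, use $\Ric = (n-1)P + Jg$ throughout, decompose $A = A_0 + \tfrac1n H\og$, and observe that the residual $W(\eta,\cdot,\eta,\cdot)$- and $A_0^2$-terms are precisely what~\eqref{eqn:fialkow} packages into $\mF$. For the third identity I would use instead the Codazzi equation: contracting it over one tangential pair gives $\Ric(\eta,\cdot)\rv_{TM} = \odelta A - \onabla H$, and since $\eta\perp TM$ this equals $(n-1)\,P(\eta,\cdot)\rv_{TM}$; splitting $A = A_0 + \tfrac1n H\og$ finishes it.

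\emph{The identities for $u$.} Along $M$ the Levi-Civita connection of $g$ satisfies the Gauss formula $\nabla_XY = \onabla_XY - A(X,Y)\eta$ and the Weingarten relation $\nabla_X\eta = A(X,\cdot)^\sharp$ for $X,Y$ tangent to $M$. Feeding these into $\nabla^2 u(X,Y) = X(Yu) - (\nabla_XY)u$ gives $\nabla^2u(X,Y) = \onabla^2u(X,Y) + A(X,Y)\,\eta u$, whose trace over $TM$ is the formula for $\Delta u$, and into $\nabla^2u(\eta,X)$ it gives $\nabla^2u(\eta,X) = \onabla_X(\eta u) - A(\onabla u, X)$, the formula for $\nabla^2 u(\eta,\cdot)\rv_{TM}$. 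For $\eta\Delta u$ I would pass to Fermi coordinates adapted to $M$, writing $g = dr^2 + g_r$ with $r$ the distance to $M$, in which $\Delta u = \partial_r^2 u - H_r\,\partial_r u + \Delta_{g_r}u$ with $H_r \to H$ as $r\to 0$; differentiating in $r$, restricting to $M$, and using $\nabla_\eta\eta = 0$ gives $\eta\Delta u = \nabla^3u(\eta,\eta,\eta) + H\,\nabla^2u(\eta,\eta) - (\partial_rH_r)\rv_M\,\eta u - \partial_r(\Delta_{g_r}u)\rv_M$. I would then substitute the radial Riccati identity $\partial_rH_r\rv_M = \lv A\rv^2 + \Ric(\eta,\eta)$ together with the first-variation formula for the Laplace--Beltrami operator (using $\tfrac12\partial_rg_r\rv_M = -A$), which gives $-\partial_r(\Delta_{g_r}u)\rv_M = \oDelta\,\eta u - 2\odelta(A(\onabla u)) + \lp\onabla H,\onabla u\rp$, and finally rewrite the coefficient of $\eta u$ via $\Ric(\eta,\eta) = (n-1)P(\eta,\eta) + J$, $\lv A\rv^2 = \lv A_0\rv^2 + \tfrac1n H^2$, and the already-established formula for $J$ to reach the stated expression.

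\emph{Main obstacle.} There is no conceptual difficulty; the work is entirely bookkeeping, and the genuinely error-prone points are (i) keeping the sign conventions for the Riemann tensor, for $A(X,Y) = g(\nabla_X\eta,Y)$, for the divergence $\delta$, and for the normal direction in the Fermi chart all mutually consistent --- this is what pins down, e.g., the sign of the $H\,\eta u$ term in the formula for $\Delta u$ --- and (ii) carrying out the two traces of the Gauss equation carefully enough that the $W(\eta,\cdot,\eta,\cdot)$- and $A_0^2$-contributions genuinely reassemble into the Fialkow tensor $\mF$ rather than into something that merely agrees with it after a further trace. Everything else is routine tensor algebra.
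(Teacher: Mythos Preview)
Your sketch is correct and follows the standard route (Gauss--Codazzi for the curvature identities, Gauss--Weingarten and a Fermi-coordinate computation with the Riccati equation and the first variation of the Laplacian for the function identities); I checked in particular that your Riccati term $-(\partial_rH_r)\rvert_M = -\lvert A\rvert^2 - \Ric(\eta,\eta)$, once rewritten via $\Ric=(n-1)P+Jg$ and the already-proved formula for $J$, does land exactly on the stated coefficient of $\eta u$. The paper itself does not give a proof of this lemma --- it simply refers to~\cite{Case2015b} --- so there is nothing further to compare; your write-up supplies precisely the details the paper omits.
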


\begin{remark}
 The tensor~\eqref{eqn:fialkow} is manifestly conformally invariant.  It is often referred to as the \emph{Fialkow tensor}~\cite{Vyatkin2013}.
\end{remark}

\subsection{Poincar\'e--Einstein manifolds}

A \emph{Poincar\'e--Einstein manifold} is a triple $(X^{n+1},M^n,g_+)$ consisting of a complete Riemannian manifold $(X_0^{n+1},g_+)$ with $\Ric_{g_+}=-ng_+$ such that $X_0$ is (diffeomorphic to) the interior of a compact manifold $X$ with boundary $\partial X=M^n$ for which there exists a defining function $r\in C^\infty(X)$ for $M$ such that $r^2g_+$ extends to a $C^{n-1,\alpha}$-metric on $X$.  Here a \emph{defining function for $M$} is a nonnegative function $r\in C^\infty(X)$ such that $r^{-1}(0)=M$ and $dr\not=0$ along $M$.  Since $\Ric_{g_+}=-ng_+$, any defining function necessarily satisfies $\lv dr\rv_{r^2g_+}=1$ along $M$.  A \emph{local defining function for $M$} is a function defined in a neighborhood $U$ of the boundary $M$ for which these same properties hold in $U$.  Note that if $r$ is a defining function, then so too is $e^\sigma r$ for any $\sigma\in C^\infty(X)$, and hence only the conformal class $[r^2g_+\rv_{TM}]$ is well-defined on a Poincar\'e--Einstein manifold.  We call the conformal manifold $(M^n,[r^2g_+\rv_{TM}])$ the \emph{conformal boundary of $(X,M,g_+)$}.

While defining functions are not uniquely determined by the data of a Poincar\'e--Einstein manifold, there are particularly nice defining functions determined when a representative $h\in[r^2g_+\rv_{TM}]$ of the conformal boundary is chosen.  Given such a representative, there is a unique local defining function $r$ such that $\lv dr\rv_{r^2g_+}\equiv1$; see~\cite{GrahamLee1991}.  That is, there is a defining function $r$ for $M$ such that $\lv dr\rv_{r^2g_+}\equiv1$ in some neighborhood $U$ of $M$, and if $\hr$ is another defining function with the same property, then $r\rv_U=\hr\rv_U$.  Since we are only concerned with the asymptotic behavior of the compactified metric $r^2g_+$ near $M$, the ambiguity of $r$ away from the boundary is immaterial.

The benefit of a geodesic defining function $r$ for the representative $h\in[r^2g_+\rv_{TM}]$ is that the flow lines of $\nabla r$ yield a diffeomorphism $U\cong [0,\varepsilon)\times M$ of a neighborhood $U$ of $M$, and in this neighborhood the metric $g:=r^2g_+$ takes the form $g=dr^2+h_r$ for $h_r$ a one-parameter family of Riemannian metrics on $M$, regarded here as metrics on the respective level sets $\{r\}\times M$.  Indeed, the Taylor expansion of $h_r$ around $r=0$ involves only even powers of $r$ up to order $n-1$, and
\begin{equation}
 \label{eqn:hr_expansion}
 h_r = h + h_{(2)}r^2 + h_{(4)}r^4 + \dotsb 
   + \begin{cases}
      h_{(n-1)}r^{n-1} + O(r^n), & \text{if $n$ is odd}, \\
      h_{(n-2)}r^{n-2} + O(r^n\log r), & \text{if $n$ is even} .
     \end{cases}
\end{equation}
Moreover, the terms $h_{(\ell)}$, $\ell<n$, are locally determined by $h$.    For example,
\begin{align}
 \label{eqn:h2} h_{(2)} & = -\oP, \\
 \label{eqn:h4} \tr_h h_{(4)} & = \frac{1}{4}\lv\oP\rv^2 ;
\end{align}
see~\cite{FeffermanGraham2012}.

Graham and Zworski~\cite{GrahamZworski2003} defined the fractional GJMS operators via scattering theory for the Laplacian of a Poincar\'e--Einstein manifold $(X^{n+1},M^n,g_+)$.  Specifically, given $\gamma\in\bigl(0,\frac{n}{2}\bigr)\setminus\bN$, set $s:=\frac{n}{2}+\gamma$ and suppose that $s(n-s)$ does not lie in the pure-point spectrum $\sigma_{pp}(-\Delta_{g_+})$ of $-\Delta_{g_+}$.  Then for any $f\in C^\infty(M)$, there is a unique solution of the Poisson equation
\begin{equation}
 \label{eqn:poisson_equation}
 -\Delta_{g_+}v - s(n-s)v = 0 
\end{equation}
such that
\begin{equation}
 \label{eqn:poisson_dirichlet}
 \lim_{r\to0}r^{s-n}v=f .
\end{equation}
Indeed, any solution of~\eqref{eqn:poisson_equation} must be of the form
\begin{equation}
 \label{eqn:poisson_asymptotics}
 v = Fr^{n-s} + Gr^s 
\end{equation}
for $F,G\in C^\infty(X)$, so our condition~\eqref{eqn:poisson_dirichlet} specifies that $F\rv_M=f$.  We denote this solution $\mP(s)(f)$ and define the \emph{scattering operator $S(s)$} by $S(s)f:=G\rv_M$, where $G$ is given by~\eqref{eqn:poisson_asymptotics} for $v=\mP(s)(f)$.  The \emph{fractional GJMS operator of order $2\gamma$} is
\begin{equation}
 \label{eqn:fractional_gjms_definition}
 P_{2\gamma} := d_\gamma S\left(\frac{n}{2}+\gamma\right), \qquad \text{where} \quad d_\gamma = 2^{2\gamma}\frac{\Gamma(\gamma)}{\Gamma(-\gamma)} . 
\end{equation}
Graham--Zworski showed that $P_{2\gamma}$ is a conformally covariant, formally self-adjoint pseudodifferential operator with principal symbol that of $(-\Delta)^\gamma$.

The Taylor expansion of $F$ (resp.\ of $G$) at $r=0$ can be obtained from the Taylor expansion~\eqref{eqn:hr_expansion} and $F\rv_M$ (resp.\ of $G\rv_M$) by formally solving the equation
\[ -\Delta_{g_+}v - s(n-s)v = O(r^\infty) \]
for $v=Fr^{n-s}$ (resp.\ $v=Gr^s$); see~\cite{GrahamZworski2003}.  It follows that
\begin{equation}
 \label{eqn:poisson_taylor_asymptotics}
 \begin{split}
 F & = f + f_{(2)}r^2 + f_{(4)}r^4 + \dotsb, \\
 G & = \cf + \cf_{(2)}r^2 + \cf_{(4)}r^4 + \dotsb
 \end{split}
\end{equation}
are even up to order $n$, the functions $f_{(2\ell)}$, $\ell\leq n$, are locally determined by $f$ and $h$, and the functions $\cf_{(2\ell)}$, $\ell\leq n$, are locally determined by $\cf:=S(s)f$ and $h$.  For example, $f_{(2)}=T_2(s)f$ and $f_{(4)}=T_4(s)f$, where
\begin{equation}
 \label{eqn:expansion-operators}
 \begin{split}
 T_2(s) & = -\frac{1}{2(2s-n-2)}L_2(n-s) , \\
 T_4(s) & = \frac{1}{8(2s-n-4)}\left(\frac{1}{2s-n-2}L_2(n-s+2)L_2(n-s) + L_4(n-s) \right)
 \end{split}
\end{equation}
and
\begin{align*}
 L_2(s) & = -\oDelta + s\oJ, \\
 L_4(s) & = \odelta\bigl(2\oP - \oJ\og\bigr)\od + \oJ\oDelta - s\lv\oP\rv^2 .
\end{align*}
Likewise $\cf_{(2)}=T_2(n-s)\cf$ (see, for example, \cite{Luo2018,Wang2016}).

A \emph{compactification of a Poincar\'e--Einstein manifold} is a compact Riemannian manifold $(X^{n+1},g)$ with boundary $M$ for which there is a Poincar\'e--Einstein manifold $(X_0,M,g_+)$ such that $X_0$ is diffeomorphic to the interior of $X$ and there is a defining function $r$ for $\partial X$ such that $g=r^2g_+$.  This notion is clearly conformally invariant, and allows us to relate our conformally invariant boundary operators to the fractional GJMS operators of Graham and Zworski.

Our strengthening of Theorem~\ref{thm:boundary_operators} and Theorem~\ref{thm:L2-trace} is made by imposing conformally invariant conditions on the boundary of a compact manifold.  It is well-known~\cite{FeffermanGraham2012} that these conditions are automatically satisfied by compactifications of Poincar\'e--Einstein manifolds.

\begin{lem}
 \label{lem:coronal}
 Let $(X^{n+1},g)$ be a compactification of a Poincar\'e--Einstein manifold $(X_0^{n+1},M^n,g_+)$.  Then, as sections of $S^2T^\ast M$,
 \begin{align}
  \label{eqn:compactification_umbilic} A_0 & \equiv 0 , \\
  \label{eqn:compactification_weyl} W(\eta,\cdot,\eta,\cdot) & \equiv 0, \\
  \label{eqn:compactification_cotton} C(\eta,\cdot,\cdot) & \equiv 0 . 
 \end{align}
 Moreover, these conditions are conformally invariant.
\end{lem}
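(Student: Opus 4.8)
The plan is to prove the conformal invariance first, then reduce to a geodesic (Fefferman--Graham) compactification and read off the three identities there. Since $\hat g = e^{2\sigma}g = (e^{\sigma}r)^2g_+$ whenever $g = r^2g_+$, the class of compactifications of a fixed Poincar\'e--Einstein manifold is itself conformally invariant, so for the ``Moreover'' clause it suffices to show that each of \eqref{eqn:compactification_umbilic}--\eqref{eqn:compactification_cotton} is preserved under $g\mapsto e^{2\sigma}g$ on an arbitrary Riemannian manifold with boundary. The standard law $\hat A_0 = e^{\sigma}A_0$ handles \eqref{eqn:compactification_umbilic}, and $\hat W_{ijkl} = e^{2\sigma}W_{ijkl}$ together with $\hat\eta = e^{-\sigma}\eta$ gives $\hat W(\hat\eta,X,\hat\eta,Y) = W(\eta,X,\eta,Y)$ for $X,Y\in TM$, so \eqref{eqn:compactification_weyl} is conformally invariant.

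For \eqref{eqn:compactification_cotton} I would use the conformal transformation law of the Cotton tensor, which modifies $C_{ijk}$ by a single contraction of the Weyl tensor against $d\sigma$. Evaluating on $(\hat\eta,X,Y)$ with $X,Y\in TM$ and splitting $d\sigma$ into its $\eta$- and $TM$-components, the $\eta$-component produces Weyl terms with two arguments proportional to $\eta$, which vanish by the antisymmetries of $W$ together with \eqref{eqn:compactification_weyl}, while the $TM$-component produces the ``one-normal'' Weyl component $W(\eta,\cdot,\cdot,\cdot)|_{TM}$. The key observation is that this component vanishes whenever $A_0\equiv0$: substituting $A = \tfrac1nH\og$ and the formula $P(\eta,\cdot)|_{TM} = -\tfrac1n\onabla H$ of Lemma~\ref{lem:gauss-codazzi} into the Codazzi equation $\Rm(X,Y,Z,\eta) = (\onabla_XA)(Y,Z) - (\onabla_YA)(X,Z)$ and into $\Rm = W + P\wedge\og$, the terms involving $dH$ cancel identically. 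Hence $\hat C(\hat\eta,X,Y) = e^{-\sigma}C(\eta,X,Y)$, so \eqref{eqn:compactification_cotton} is conformally invariant once \eqref{eqn:compactification_umbilic} holds.

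For the identities themselves, by conformal invariance I may assume $g = dr^2 + h_r$ with $h_r$ as in \eqref{eqn:hr_expansion}, so $\eta = -\partial_r$ along $M = \{r=0\}$. Since \eqref{eqn:hr_expansion} has no odd powers of $r$ below order $n$, the expansion of $h_r$, and hence of every curvature quantity of $g$ built from boundedly many $r$-derivatives, has vanishing low-order odd terms. In particular $\partial_rh_r|_{r=0}=0$; as the second fundamental form of $M$ is $-\tfrac12\partial_rh_r|_{r=0}$, this gives $A\equiv0$, hence \eqref{eqn:compactification_umbilic} and $H\equiv0$. A direct computation of $\Rm_g$ at $r=0$, using $\partial_rh_r|_{r=0}=0$, the Codazzi equation with $A=0$, and $h_{(2)}=-\oP$ from \eqref{eqn:h2}, yields $\Ric_g(\eta,\eta)|_{r=0}=\oJ$, $\Ric_g(\eta,\cdot)|_{TM}=0$, and $\Ric_g|_{TM}|_{r=0}=\oRic+\oP$, whence $P_g|_{TM}|_{r=0}=\tfrac1{n-1}(\oRic+\oP-\oJ\og)=\oP$. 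Comparing with the decomposition of $P|_{TM}$ in Lemma~\ref{lem:gauss-codazzi} (now with $A_0=H=0$) forces $\mF|_{r=0}=0$, which by \eqref{eqn:fialkow} is \eqref{eqn:compactification_weyl}. Finally, since the Christoffel symbols of $g$ involving $\partial_r$ vanish at $r=0$ and $P_g(\eta,\cdot)|_{TM}|_{r=0}=0$ (Lemma~\ref{lem:gauss-codazzi}), one computes $C(\eta,X,Y)|_{r=0} = -(\partial_rP_g)(X,Y)|_{r=0}$ for $X,Y\in TM$, and this vanishes because $P_g$ has no odd-order term in its expansion at $r=0$, giving \eqref{eqn:compactification_cotton}. (Alternatively, since $C_{g_+}=0$ the Cotton transformation law gives $C_g = r^{-1}$ times a contraction of $W_g$ with $\nabla^gr$ in the interior; smoothness of $C_g$ forces this contraction to vanish along $M$, and since $\nabla^gr|_M$ is proportional to $\eta$ this yields $W_g(\eta,\cdot,\cdot,\cdot)|_M=0$ and, via the extension $-\nabla^gr$ of $\eta$ and antisymmetry of $W_g$, also $C_g(\eta,\cdot,\cdot)|_M=0$.)

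The delicate step I expect is the conformal invariance of \eqref{eqn:compactification_cotton}: the work is in pinning down the precise Weyl-correction term in the transformation of the Cotton tensor and checking that every piece of it dies on $(\eta,X,Y)$, the essential input being that an umbilic boundary automatically has vanishing one-normal Weyl curvature. The curvature computation behind \eqref{eqn:compactification_weyl} is routine but sensitive to sign and curvature conventions.
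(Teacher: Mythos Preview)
Your proof is correct and follows the same strategy as the paper: establish conformal invariance of the system (using Codazzi to show umbilicity kills the one-normal Weyl component, then the Cotton transformation law) and reduce to a geodesic compactification where the identities follow from the even expansion of $h_r$. The only differences are in execution---you compute $W(\eta,\cdot,\eta,\cdot)=0$ explicitly via the Fialkow tensor where the paper simply cites this as well-known, and for $C(\eta,\cdot,\cdot)=0$ your primary argument is the parity of $P_g$ in $r$ while the paper uses $C^{g_+}=0$ together with the just-established conformal invariance (which is essentially your parenthetical alternative).
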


\begin{proof}
 Equation~\eqref{eqn:compactification_umbilic} is equivalent to the condition that the boundary $M$ is umbilic.  It is well-known that the condition of umbilicity is conformally invariant and that the lack of a first-order term in the expansion~\eqref{eqn:hr_expansion} implies that $M$ is umbilic (cf.\ \cite{FeffermanGraham2012}).  Note that the Codazzi--Mainardi equation,
 \[ R(Y_1,Y_2,\eta,Y_3) = \frac{1}{n}\left( \lp\nabla H,Y_2\rp\lp Y_1,Y_3\rp - \lp\nabla H,Y_1\rp\lp Y_2,Y_3\rp \right) \]
 for sections $Y_1,Y_2,Y_3$ of $TM$, and Lemma~\ref{lem:gauss-codazzi} together imply that, since $M$ is umbilic,
 \begin{equation}
  \label{eqn:umbilic_consequence}
  W(\eta,\cdot,\cdot,\cdot) \equiv 0
 \end{equation}
 as a section of $T^\ast M\otimes \Lambda^2T^\ast M$.
 
 Since the Weyl tensor is conformally invariant and the outward-pointing unit normals $\eta$ and $\heta$ with respect to $g$ and $\hg:=e^{2\sigma}g$, respectively, are related by $\heta=e^{-\sigma}\eta$, it is clear that~\eqref{eqn:compactification_weyl} is a conformally invariant condition.  The fact that~\eqref{eqn:compactification_weyl} holds is well-known~\cite{FeffermanGraham2012} and follows readily from the expansion $g=dr^2+h_r$.
 
 The conformal transformation formula
 \[ \hC_{ijk} = C_{ijk} - W_{ijkl}\sigma^l \]
 and the consequence~\eqref{eqn:umbilic_consequence} of umbilicity together imply that~\eqref{eqn:compactification_umbilic}, \eqref{eqn:compactification_weyl} and~\eqref{eqn:compactification_cotton} are a conformally invariant system of equations.  Since $g_+$ is Einstein, the definition $C=dP$ of the Cotton tensor implies that $C^{g_+}(\partial_r,\cdot,\partial_r)\equiv0$ near $M$, and hence~\eqref{eqn:compactification_cotton} holds.
\end{proof}

Our computations of boundary operators require only compact manifolds with boundary which satisfy the conclusions of Lemma~\ref{lem:coronal}.  For this reason, it is useful to give such boundaries a name.

\begin{defn}
 A Riemannian manifold $(X^{n+1},g)$ has \emph{{\coronal} boundary} $M=\partial X$ if each of~\eqref{eqn:compactification_umbilic}, \eqref{eqn:compactification_weyl} and~\eqref{eqn:compactification_cotton} holds along $M$.
\end{defn}

It is well-known that a Riemannian manifold of dimension at least four with vanishing Weyl tensor also has vanishing Cotton and Bach tensors.  Similarly, a Riemannian manifold with {\coronal} boundary is such that certain additional components of the Cotton and Weyl tensors vanish (cf.\ \cite{FeffermanGraham2012}).

\begin{lem}
 \label{lem:coronal-corollary}
 Let $(X^{n+1},g)$, $n\geq3$, be a Riemannian manifold with {\coronal} boundary.  Then, as sections of the tensor-algebra bundle of $X$,
 \begin{align}
  \label{eqn:coronal-corollary-W} W(\eta,\cdot,\cdot,\cdot) & \equiv 0, \\
  \label{eqn:coronal-corollary-C} C(\eta,\cdot,\cdot) & \equiv 0, \\
  \label{eqn:coronal-corollary-B} B(\eta,\cdot) & \equiv 0 .
 \end{align}
\end{lem}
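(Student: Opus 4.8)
The plan is to bootstrap from the two substantive {\coronal} hypotheses --- umbilicity and $W(\eta,\cdot,\eta,\cdot)\equiv0$ --- using the contracted second Bianchi identities, which for $\dim X\geq4$ express (a reordering of) the Cotton tensor as a constant multiple of the divergence of the Weyl tensor and express the Bach tensor as a divergence of the Cotton tensor plus a Weyl--Schouten contraction. Thus~\eqref{eqn:coronal-corollary-W} feeds~\eqref{eqn:coronal-corollary-C}, which feeds~\eqref{eqn:coronal-corollary-B}; the hypothesis $n\geq3$ enters only because the relevant constant is $\frac{1}{\dim X-3}=\frac{1}{n-2}$.

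I would first dispose of~\eqref{eqn:coronal-corollary-W}: by the pair symmetry and the two skew-symmetries of $W$, the only components of $W(\eta,\cdot,\cdot,\cdot)$ not already forced to vanish algebraically are $W(\eta,Y_1,Y_2,Y_3)$ and $W(\eta,Y_1,\eta,Y_2)$ with the $Y_i$ tangent to $M$, and these are exactly~\eqref{eqn:umbilic_consequence} --- already derived in the proof of Lemma~\ref{lem:coronal} from the Codazzi--Mainardi equation, Lemma~\ref{lem:gauss-codazzi}, and umbilicity --- and~\eqref{eqn:compactification_weyl}. Hence $W$ with $\eta$ in any one of its slots vanishes along $M$. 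The engine for the other two identities is a \emph{divergence principle}: if $T$ is a trace-free tensor with curvature-type index symmetries and $T$ with $\eta$ in any slot vanishes along $M$, then any divergence of $T$ whose normal-direction component is algebraically zero by one of the skew-symmetries of $T$ (i.e.\ one contracts a slot that is skew-paired with a slot carrying $\eta$) again vanishes along $M$. One proves this at $p\in M$ in an orthonormal frame $\{e_1,\dots,e_n,\eta\}$ with the $e_a$ tangent: splitting the contraction into its tangential part and the term differentiating in the $\eta$-direction, the latter is \emph{algebraically} zero --- this is the crucial point, since it sidesteps the unknown behavior of $T$ off $M$ --- the tangential pieces $e_a\bigl(T(e_a,\dots)\bigr)$ vanish as tangential derivatives along $M$ of functions that are identically zero on $M$, and each Christoffel correction is killed by umbilicity, since $\nabla_{e_a}e_a=\onabla_{e_a}e_a-\tfrac{H}{n}\eta$ reinserts a normal vector into $T$ while $\nabla_{e_a}\eta=\tfrac{H}{n}e_a$ converts a normal slot into a tangential one, in which case summing over $a$ produces a trace of $T$ --- zero by trace-freeness --- the residual $\eta$--$\eta$ term again being a normal-slot component of $T$.

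Granting this, the rest is fast. For~\eqref{eqn:coronal-corollary-C}, by~\eqref{eqn:compactification_cotton} and the skew-symmetry of $C$ in its first pair the only component still in question is $C(\eta,Y,\eta)$ with $Y$ tangent; the contracted Bianchi identity $\nabla^cW_{abcd}=(n-2)C_{abd}$ presents this as $\tfrac{1}{n-2}\nabla^cW_{\eta Y c\eta}$, whose $\eta$-direction term is $W_{\eta Y\eta\eta}=0$ by skew-symmetry in the last pair, so~\eqref{eqn:coronal-corollary-W} and the divergence principle give $C(\eta,Y,\eta)\equiv0$; then $C$ with $\eta$ in any slot vanishes along $M$ (using, for the remaining configurations, the cyclic identity $C_{ijk}+C_{jki}+C_{kij}=0$). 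For~\eqref{eqn:coronal-corollary-B}, substitute into $B_{ij}=\nabla^kC_{kij}+W_{ikjl}P^{kl}$: the term $W_{\eta kjl}P^{kl}$ vanishes along $M$ by~\eqref{eqn:coronal-corollary-W}, while $\nabla^kC_{k\eta j}$ --- whose $\eta$-direction term $C_{\eta\eta j}$ is zero by the skew-symmetry of $C$ in its first pair --- vanishes along $M$ by~\eqref{eqn:coronal-corollary-C} and the divergence principle, where the trace-freeness $g^{ik}C_{ijk}=0$ (a standard consequence of the second Bianchi identity, $\nabla^iP_{ij}=\nabla_jJ$) kills the single residual $\tfrac{H}{n}\sum_aC(e_a,\eta,e_a)$ correction in the case $j=\eta$.

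The step I expect to require the real work is the divergence principle: one has to check that \emph{every} term produced by differentiating a normal-slot component of $W$, and then of $C$, is annihilated by umbilicity, an index (anti)symmetry, or trace-freeness. It is umbilicity --- the vanishing of $A_0$ --- that makes this close up, since it keeps the normal part of $\nabla_{e_a}(\text{tangent})$ a pure multiple of $\eta$ and $\nabla_{e_a}\eta$ purely tangential, so that the trace-free shear of the second fundamental form does not couple the intrinsic and normal directions.
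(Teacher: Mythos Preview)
Your proposal is correct and follows essentially the same bootstrap as the paper: deduce~\eqref{eqn:coronal-corollary-W} from umbilicity and~\eqref{eqn:compactification_weyl}, then use the contracted Bianchi identity $(n-2)C(\eta,Y,\eta)=(\delta W)(\eta,\eta,Y)$ together with an orthonormal-frame divergence computation (your ``divergence principle'') to get~\eqref{eqn:coronal-corollary-C}, and finally feed this into $B_{ij}=\nabla^kC_{kij}+W_{ikjl}P^{kl}$ for~\eqref{eqn:coronal-corollary-B}. The only cosmetic difference is that the paper extends the frame and $Y$ so that $(\nabla E_j)_p=(\nabla Y)_p=0$ to kill the Christoffel corrections outright, whereas you keep a general frame and dispatch those corrections individually via umbilicity, skew-symmetry, and trace-freeness; both are fine.
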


\begin{proof}
 In what follows, let $Y\in T_pM$, where $M=\partial X$, and let $\{E_j\}_{j=1}^n$ be an orthonormal basis of $T_pM$.  All computations are done at the point $p\in M$, and we extend $Y$ and $E_j$ to a neighborhood of $p$ in $X$ such that $(\nabla Y)_p=(\nabla E_j)_p=0$.
 
 First, \eqref{eqn:compactification_weyl} and the consequence~\eqref{eqn:umbilic_consequence} of umbilicity imply that~\eqref{eqn:coronal-corollary-W} holds.
 
 Second, recall that $(n-2)C(\eta,Y,\eta)=(\delta W)(\eta,\eta,Y)$.  From the symmetries of the Weyl tensor, we compute that
 \begin{multline*}
  (\delta W)(\eta,\eta,Y) \\ = \sum_{j=1}^n \left[ E_j\bigl(W(E_j,\eta,\eta,Y)\bigr) - W(E_j,\nabla_{E_j}\eta,\eta,Y) - W(E_j,\eta,\nabla_{E_j}\eta,Y) \right] .
 \end{multline*}
 Since $M$ is umbilic and $W(\eta,Y,E_j,\eta)=0$, we see that $(\delta W)(\eta,\eta,Y)=0$.  Thus~\eqref{eqn:coronal-corollary-C} holds.
 
 Third, it follows from~\eqref{eqn:coronal-corollary-W} that
 \[ B(\eta,\cdot) = \sum_{j=1}^n \nabla_{E_j} C(E_j,\eta,\cdot) . \]
 Computing as in the previous paragraph shows that~\eqref{eqn:coronal-corollary-B} holds.
\end{proof}

In order to make the computations necessary to prove Theorem~\ref{thm:dirichlet-to-neumann}, it is useful to compute a number of intrinsic and extensic scalar invariants of the boundary of a geodesic compactification of a Poincar\'e-Einstein manifold.

\begin{lem}
 \label{lem:geodesic_compactification_invariants}
 Let $(X^{n+1},g)$, $n\geq5$, be a geodesic compactification of a Poincar\'e--Einstein manifold.  Then, along $M$,
 \begin{align*}
  H & = 0, & P(\eta,\eta) & = 0, \\
  J & = \oJ, & \eta J & = 0, \\
  \nabla_\eta P(\eta,\eta) & = 0, & \Delta J & = \oDelta\oJ + \lv\oP\rv^2, \\
  \eta\Delta J & = 0 .
 \end{align*}
\end{lem}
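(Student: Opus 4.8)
The plan is to carry out every computation in the geodesic collar of the compactification. Write $g = dr^2 + h_r$ in a neighborhood of $M$, where $r$ is the geodesic defining function and $h_r$ has the Fefferman--Graham expansion~\eqref{eqn:hr_expansion}, so that $\eta = -\partial_r$. Three of the identities then come out almost for free. The second fundamental form of $M$ with respect to $\eta = -\partial_r$ is the restriction to $r=0$ of $-\tfrac12\partial_r h_r$, which vanishes since~\eqref{eqn:hr_expansion} contains no term linear in $r$; hence $A\rv_M = 0$, and in particular $H = 0$ (and $A_0 = 0$, recovering part of Lemma~\ref{lem:coronal}). Next, since $g_+ = r^{-2}g$ is Einstein with $P^{g_+} = -\tfrac12 g_+$ (because $\Ric_{g_+} = -ng_+$) and $\lv dr\rv_g \equiv 1$ in the collar, the conformal transformation law for the Schouten tensor collapses to the exact identity $P^g = -\tfrac1r\nabla_g^2 r$ throughout the collar. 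Since $\nabla_g r = \partial_r$ is a unit geodesic vector field, $\nabla_g^2 r(\partial_r,\cdot) = 0$, so $P(\eta,\cdot)\equiv 0$ in the collar; in particular $P(\eta,\eta)\equiv 0$, whence $\nabla_\eta P(\eta,\eta) = \eta\bigl(P(\eta,\eta)\bigr) = 0$ using $\nabla_\eta\eta = \nabla_{\partial_r}\partial_r = 0$. Substituting $H = A_0 = 0$ and $P(\eta,\eta) = 0$ into the second identity of Lemma~\ref{lem:gauss-codazzi} gives $J = \oJ$ along $M$.

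For the remaining identities $\eta J = 0$, $\Delta J = \oDelta\oJ + \lv\oP\rv^2$ and $\eta\Delta J = 0$, the idea is to reduce them, via the formulas of Lemma~\ref{lem:gauss-codazzi} for $\Delta u$ and $\eta\Delta u$ with $u = J$, to the first three normal Taylor coefficients of the function $r\mapsto J^g(r,\cdot)$. Using $H = 0$, $A = 0$, $P(\eta,\eta) = 0$, $\nabla_\eta\eta = 0$ and $J\rv_M = \oJ$, one finds $\eta J\rv_M = -\partial_r J\rv_{r=0}$, $\Delta J\rv_M = \partial_r^2 J\rv_{r=0} + \oDelta\oJ$ and $\eta\Delta J\rv_M = -\partial_r^3 J\rv_{r=0} + \oDelta\bigl((\eta J)\rv_M\bigr) - \oJ\,(\eta J)\rv_M$; in the last formula the remaining two terms involve only $(\eta J)\rv_M$ and its tangential derivatives, so they drop out once $(\eta J)\rv_M = 0$ is known. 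Taking the $g$-trace of $P^g = -\tfrac1r\nabla_g^2 r$ gives $J^g = -\tfrac1r\Delta_g r = -\tfrac1{2r}\,\partial_r\log\det h_r$, and expanding $\log\det h_r$ with $h_{(2)} = -\oP$ and $\tr_h h_{(4)} = \tfrac14\lv\oP\rv^2$ from~\eqref{eqn:h2}--\eqref{eqn:h4} yields $J^g = \oJ + \tfrac12\lv\oP\rv^2\,r^2 + O(r^4)$, with vanishing linear and cubic coefficients. Thus $\partial_r J\rv_{r=0} = 0$, $\partial_r^2 J\rv_{r=0} = \lv\oP\rv^2$ and $\partial_r^3 J\rv_{r=0} = 0$, which yield the three identities.

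The one step requiring genuine care is the vanishing of the cubic coefficient of $J^g$, equivalently of the $r^5$-coefficient of $\log\det h_r$. Because $\log\det h_r$ is built from traces of powers of $h^{-1}(h_r - h)$, that coefficient equals $\tr_h$ of the $r^5$-coefficient of $h_r$; for $n \geq 6$ this coefficient of $h_r$ vanishes outright, while for the critical case $n = 5$ it is the non-locally-determined term $h_{(5)}$, whose trace nonetheless vanishes by the standard fact that $\tr_h h_{(n)} = 0$ when $n$ is odd~\cite{FeffermanGraham2012}. This is the only point at which the non-locally-determined part of the Fefferman--Graham expansion enters. Granting it, the rest is routine power-series bookkeeping; I expect the main effort to go into keeping track of which coefficients of $h_r$ contribute at each order, the conceptual content being entirely contained in the exact formula $P^g = -\tfrac1r\nabla_g^2 r$ together with Lemma~\ref{lem:gauss-codazzi}.
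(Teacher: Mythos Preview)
Your proof is correct and follows essentially the same route as the paper's: both work in the geodesic collar, derive the exact identity $P^g=-r^{-1}\nabla_g^2 r$ from the conformal change formula (and $\lv dr\rv_g\equiv 1$), deduce $P(\eta,\cdot)\equiv 0$ and hence $P(\eta,\eta)=\nabla_\eta P(\eta,\eta)=0$, and then read off the behavior of $J$, $\eta J$, $\Delta J$, $\eta\Delta J$ from the Taylor expansion $J=\oJ+\tfrac12\lv\oP\rv^2 r^2+o(r^3)$ together with the collar form of the Laplacian. The only cosmetic differences are that you extract $J$ via $J=-\tfrac{1}{2r}\partial_r\log\det h_r$ (the paper instead traces its formula for $P^g\rv_{TM}$), and you obtain $\eta\Delta J$ by invoking the $\eta\Delta u$ formula of Lemma~\ref{lem:gauss-codazzi} rather than by expanding $\Delta$ as $\partial_r^2+\oDelta-r\oJ\partial_r+O(r^2)$ and differentiating in $r$; both are entirely equivalent bookkeeping, and your explicit handling of the $r^5$-coefficient (via $\tr_h h_{(5)}=0$ when $n=5$) makes the critical case transparent.
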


\begin{proof}
 Since $g$ is a geodesic compactification of a Poincar\'e--Einstein manifold with $n\geq5$, in a neighborhood $U$ of $M$ it holds that $g=dr^2+h_r$ for $h_r$ a one-parameter family of Riemannian metrics on $M$ such that
 \begin{equation}
  \label{eqn:metric_expansion}
  h_r = h - r^2\oP + r^4h_{(4)} + r^5\ch + o(r^5),
 \end{equation}
 where $\tr_hh_{(4)}=\frac{1}{4}\lv\oP\rv^2$ and $\tr_h\ch=0$; see~\cite{FeffermanGraham2012}.  In particular, we see that the outward-pointing unit normal along $M:=\partial X$ is $\eta=-\partial_r$.  Since $\partial_rg\equiv0$ along $M$, we recover the well-known fact that $M$ is totally geodesic.
 
 Next, since $g_+=r^{-2}g$ and $P^{g_+}=-\frac{1}{2}g_+$, we deduce from the conformal transformation of the Schouten tensor that
 \[ P^g = -r^{-1}\nabla^2r , \]
 where the Hessian on the right-hand side is computed with respect to $g$.  Since $\nabla_{\partial_r}\partial_r=0$ in $U$, we see that $P(\partial_r,\cdot)\equiv0$.  In particular, $P(\eta,\eta)=0$ and $\nabla_\eta P(\eta,\eta)=0$.  Using~\eqref{eqn:metric_expansion}, we also compute that
 \[ P^g\rv_{TM} = \oP - 2r^2h_{(4)} - \frac{5}{2}r^3\ch + o(r^3) . \]
 Taking the trace with respect to $g$ yields
 \begin{equation}
  \label{eqn:J_expansion}
  J = \oJ + \frac{1}{2}r^2\lv\oP\rv^2 + o(r^3) .
 \end{equation}
 In particular, $J\rv_M=\oJ$ and $\eta J=0$.
 
 Finally, it follows from~\eqref{eqn:metric_expansion} that, near $M$,
 \[ \Delta = \oDelta + \partial_r^2 - r\oJ\partial_r + O(r^2) . \]
 Combining this with~\eqref{eqn:J_expansion} yields
 \[ \Delta J = \oDelta\oJ + \lv\oP\rv^2 + o(r) . \]
 In particular, $\Delta J\rv_M=\oDelta\oJ+\lv\oP\rv^2$ and $\eta\Delta J=0$.
\end{proof}

\subsection{Differential (boundary) operators}

There are five basic types of differential operators from which we build our boundary operators.  First are the exterior derivatives $d\colon C^\infty(X)\to\Omega^1(X)$ and $\od\colon C^\infty(M)\to\Omega^1(M)$.  Second are the divergences $\delta\colon\Omega^1(X)\to C^\infty(X)$ and $\odelta\colon\Omega^1(M)\to C^\infty(M)$.  Third is the outward-pointing normal, regarded as a derivation $\eta\colon C^\infty(X)\to C^\infty(M)$.  Fourth is the action of a symmetric $(0,2)$-tensor on one-forms as an endomorphism.  Fifth is the action of a smooth function as a multiplication operator.  Juxtapositions of such operators denote compositions; for example, $\delta Pd$ denotes the operator
\[ (\delta Pd)(u) := \nabla^i\left(P_i^j\nabla_ju\right) \]
for all $u\in C^\infty(X)$, while $\oDelta:=\odelta\od$ denotes the intrinsic Laplacian on $M$ with respect to the metric induced by $g$.  When necessary, we surround expressions for scalar invariants by parentheses to clarify how they act; e.g.\ as operators, $\Delta J$ denotes the operator $u\mapsto\Delta(Ju)$ while $(\Delta J)$ denotes the multiplication operator $u\mapsto u\Delta J$.

Our operators are all natural Riemannian operators (see~\cite{AtiyahBottPatodi1973} and~\cite[Subsection~2.4]{GoverWaldron2015} for precise definitions), and can be regarded as functions which map each metric $g$ in $X$ to a differential (boundary) operator on the space of smooth functions or one-forms on $X$ or $M$ which is polynomial in the metrics $g$ and $\og$, their inverses, their Levi-Civita connections $\nabla$ and $\onabla$, their Riemann curvature tensors, and the outward-pointing unit normal along $M$.  This allows us to discuss the homogeneity of such an operator: A natural Riemannian operator $L^g$ is \emph{homogeneous of degree $k$} if $L^{c^2g}=c^{k}L^g$ for all constants $c>0$.  For example, the Laplacians $\Delta$ and $\oDelta$ are homogeneous of degree $-2$, while the normal derivative $\eta$ is homogeneous of degree $-1$.

A homogeneous natural Riemannian operator $L\colon C^\infty(X)\to C^\infty(M)$ of degree $k$ is \emph{conformally covariant of weight $w\in\bR$} if
\begin{equation}
 \label{eqn:conformal_covariant}
 L^{e^{2\sigma}g} = e^{(w+k)\sigma} L^g e^{-w\sigma}
\end{equation}
for all metrics $g$ on $X$ and all $\sigma\in C^\infty(X)$.  Recall that the right-hand side of~\eqref{eqn:conformal_covariant} expresses the pre- and post-composition of $L$ with two multiplication operators.  Note that~\eqref{eqn:conformal_covariant} specifies that $L$ is \emph{conformally covariant of bidegree (w,w+k)}.

It is typically simpler to check whether a homogeneous natural Riemannian operator $L$ is infinitesimally conformally covariant.  If $L$ is homogeneous of degree $k$, then we define the infinitesimal conformal variation of $L$ in the direction of $\sigma\in C^\infty(X)$ acting on densities of weight $w\in\bR$ by
\begin{equation}
 \label{eqn:deltaw}
 \delta_w(L)_{g,\sigma} := \left.\frac{\partial}{\partial t}\right|_{t=0} e^{-(w+k)t\sigma}L^{e^{2t\sigma}g}e^{wt\sigma} .
\end{equation}
We say that $L$ is \emph{infinitesimally conformally covariant of weight $w\in\bR$} if $\delta_w(L)_{g,\sigma}=0$ for all $g$ and $\sigma$.  Branson~\cite{Branson1985} showed that $L$ is infinitesimally conformally covariant of weight $w$ if and only if it is conformally covariant of weight $w$.  When the metric $g$ and the function $\sigma$ are clear from context, we simply write $\delta_wL$ for the infinitesimal conformal variation~\eqref{eqn:deltaw}.

There are two observations that make it relatively simple to compute infinitesimal conformal variations.  The first are the well-known computations of the conformal variations of basic operators, some of which are collected in the following lemma for convenience.

\begin{lem}
 \label{lem:basic_conformal_variations}
 Let $(X^{n+1},g)$ be a compact Riemannian manifold with boundary.  Let $\sigma\in C^\infty(X)$ denote a direction in the space of conformal metrics and let $\alpha\in\Omega^1(X)$.  Then
 \begin{align*}
  \delta_w g & = 0, \\
  \delta_w P & = -\nabla^2\sigma, \\
  (\delta_w\nabla)(\alpha) & = (w-1)d\sigma\otimes\alpha - \alpha\otimes d\sigma + \lp d\sigma,\alpha\rp\,g , \\
  \delta_w H & = n(\eta\sigma) .
 \end{align*}
\end{lem}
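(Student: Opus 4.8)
The plan is to establish the four identities one at a time. In each case I would begin from the standard \emph{finite} conformal transformation law of the quantity in question, replace $\sigma$ by $t\sigma$, substitute into the definition~\eqref{eqn:deltaw} of $\delta_w$, and differentiate at $t=0$. The only genuine bookkeeping is to keep track of the homogeneity degree $k$ of the operator, so that the exponential prefactors $e^{-(w+k)t\sigma}$ and $e^{wt\sigma}$ in~\eqref{eqn:deltaw} are inserted correctly; it is precisely the interplay between the conformal weight twist and $k$ that explains why the variations of $g$, $P$, and $H$ come out independent of $w$ while the variation of $\nabla$ does not.

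For $g$: regarded (as it occurs in the operators $\delta S d$) as the endomorphism $\alpha\mapsto\alpha$ of $\Omega^1(X)$ obtained by raising one index, $g$ is the identity for every metric in the conformal class and is homogeneous of degree $0$, so $e^{-wt\sigma}\,\id\,e^{wt\sigma}=\id$ and $\delta_w g=0$. For $P$: I would start from the transformation law $\hat P_{ij}=P_{ij}-\nabla_i\nabla_j\sigma+\nabla_i\sigma\,\nabla_j\sigma-\frac12\lv d\sigma\rv_g^2\,g_{ij}$ of the Schouten tensor, then raise an index with the inverse metric $\hat g^{-1}=e^{-2\sigma}g^{-1}$ so that $P$ becomes an endomorphism of one-forms, which is homogeneous of degree $k=-2$. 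The factor $e^{-2t\sigma}$ coming from the inverse metric cancels $e^{-(w-2)t\sigma}e^{wt\sigma}=e^{2t\sigma}$ exactly, so differentiating at $t=0$ kills the terms quadratic in $d\sigma$ and leaves $\delta_w P=-\nabla^2\sigma$ as an operator on one-forms.

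For $\nabla$ (the one identity with genuine $w$-dependence): I would use the conformal change of Christoffel symbols, $\hat\Gamma_{ij}^k=\Gamma_{ij}^k+\delta_i^k\nabla_j\sigma+\delta_j^k\nabla_i\sigma-g_{ij}\nabla^k\sigma$, to obtain $(\hat\nabla\alpha)_{ij}=(\nabla\alpha)_{ij}-\nabla_i\sigma\,\alpha_j-\nabla_j\sigma\,\alpha_i+\lp d\sigma,\alpha\rp g_{ij}$, which is already exactly linear in $\sigma$. Since $\nabla$ is homogeneous of degree $0$, \eqref{eqn:deltaw} reads $e^{-wt\sigma}\hat\nabla(e^{wt\sigma}\alpha)$; differentiating the inner $e^{wt\sigma}$ produces an extra term $w\,\nabla_i\sigma\,\alpha_j$ that combines with the $-\nabla_i\sigma\,\alpha_j$ from the Christoffel correction to give the coefficient $w-1$, and one reads off $(\delta_w\nabla)(\alpha)=(w-1)\,d\sigma\otimes\alpha-\alpha\otimes d\sigma+\lp d\sigma,\alpha\rp g$.

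For $H$: I would quote the classical transformation law $\hat H=e^{-\sigma}\bigl(H+n\,\eta\sigma\bigr)$ for the mean curvature with respect to the outward unit normal, which one can derive from $\hat\eta=e^{-\sigma}\eta$ together with the boundary term in the conformal change of $\nabla$ just computed, or read off from the trace part of the Gauss-type identity in Lemma~\ref{lem:gauss-codazzi}. Since $H$ is homogeneous of degree $k=-1$, \eqref{eqn:deltaw} becomes $e^{t\sigma}\hat H$; the factor $e^{t\sigma}$ cancels the $-\sigma H$ produced by differentiating $e^{-\sigma}$, and what survives at $t=0$ is $\delta_w H=n(\eta\sigma)$. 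None of these computations is difficult; if anything calls for care it is fixing the signs in the mean-curvature transformation law (equivalently, the sign conventions for $A$ and $\eta$) and the index order in $\nabla\alpha$, but both are standard.
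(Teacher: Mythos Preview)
Your proposal is correct and follows essentially the same approach as the paper: quote the finite conformal transformation laws for $P$, $\nabla$, and $H$, substitute $t\sigma$, and differentiate at $t=0$. The paper's proof is a terse two-sentence reference to these transformation formulas, while you have carefully tracked the homogeneity degrees and explained the cancellations that make $\delta_w P$ and $\delta_w H$ independent of $w$ and produce the $(w-1)$ in $\delta_w\nabla$; this extra bookkeeping is helpful but not a different method.
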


\begin{proof}
 It is immediate that $\delta_wg=0$.  The remaining conclusions follow from the well-known conformal transformation formulas
 \begin{align*}
  \hP & = P - \nabla^2\sigma + d\sigma\otimes d\sigma - \frac{1}{2}\lv\nabla\sigma\rv^2 g, \\
  \hnabla\alpha & = -d\sigma\otimes\alpha - \alpha\otimes d\sigma + \lp d\sigma,\alpha\rp\,g, \\
  e^\sigma\hH & = H + n\eta\sigma
 \end{align*}
 for $\hg:=e^{2\sigma}g$.
\end{proof}

The second observation is that the infinitesimal conformal variation operator $\delta_w$ satisfies an analogue of the Leibniz rule.

\begin{lem}
 \label{lem:deltaw_leibniz}
 Let $(X^{n+1},g)$ be a compact Riemannian manifold with boundary.  Suppose that $K$ and $L$ are natural Riemannian operators which are homogeneous of degrees $k$ and $\ell$, respectively, and that the composition $KL$ makes sense.  Then $KL$ is homogeneous of degree $k+\ell$ and
 \[ \delta_w\left(KL\right) = K\circ\delta_w L + \left(\delta_{w+\ell}K\right)\circ L . \]
\end{lem}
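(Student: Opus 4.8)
The plan is to prove the Leibniz-type formula by a direct differentiation at $t=0$, exploiting the definition~\eqref{eqn:deltaw} of $\delta_w$ together with the multiplicativity of the conformal scaling. First I would record that homogeneity is additive under composition: if $K^{c^2g}=c^kK^g$ and $L^{c^2g}=c^\ell L^g$, then $(KL)^{c^2g}=c^kK^g\circ c^\ell L^g=c^{k+\ell}(KL)^g$, so $KL$ is homogeneous of degree $k+\ell$, which is the first assertion and also tells us which exponent to use when forming $\delta_w(KL)$.

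Next I would insert an identity operator between $K$ and $L$ in the form $e^{wt\sigma}e^{-wt\sigma}$ is not quite what we want; rather, the key algebraic step is to write, for the conformally rescaled metric $\hg=e^{2t\sigma}g$,
\[
 e^{-(w+k+\ell)t\sigma}\,K^{\hg}L^{\hg}\,e^{wt\sigma}
 = \left( e^{-(w+\ell+k)t\sigma}K^{\hg}e^{(w+\ell)t\sigma}\right)\circ\left( e^{-(w+\ell)t\sigma}L^{\hg}e^{wt\sigma}\right),
\]
since the inserted factors $e^{(w+\ell)t\sigma}e^{-(w+\ell)t\sigma}=\id$ cancel. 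The first parenthesized factor is precisely the quantity whose $t$-derivative at $0$ defines $\delta_{w+\ell}(K)$ (because $K$ is homogeneous of degree $k$, the correct "output weight" shift is $(w+\ell)+k$), and the second is the quantity defining $\delta_w(L)$. Differentiating the product at $t=0$ by the ordinary product rule, and using that at $t=0$ each parenthesized factor equals $K^g$, resp.\ $L^g$, gives
\[
 \delta_w(KL)_{g,\sigma} = K^g\circ\delta_w(L)_{g,\sigma} + \bigl(\delta_{w+\ell}(K)_{g,\sigma}\bigr)\circ L^g,
\]
which is the claimed formula.

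The one point that needs a little care — and which I expect to be the only real obstacle — is bookkeeping the weight shifts: one must check that conjugating $K$ by $e^{\pm(w+\ell)t\sigma}$ rather than $e^{\pm wt\sigma}$ is exactly what produces $\delta_{w+\ell}K$ in the sense of~\eqref{eqn:deltaw}, including getting the "degree $k$" correction in the exponent $(w+\ell)+k$ right. Once the factorization above is written down with the correct exponents, the differentiation is a one-line application of the product rule, so no substantive computation remains; the lemma follows. I would also remark that this is the infinitesimal shadow of the obvious fact that conformal covariance of bidegree $(a,b)$ composed with conformal covariance of bidegree $(b,c)$ yields bidegree $(a,c)$, which is a useful sanity check on the weights.
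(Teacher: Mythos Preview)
Your proposal is correct and follows essentially the same route as the paper: the paper's proof consists of the single identity
\[ e^{-(w+k+\ell)\sigma}KLe^{w\sigma} = e^{-(w+k+\ell)\sigma}Ke^{(w+\ell)\sigma}\,e^{-(w+\ell)\sigma}Le^{w\sigma}, \]
and your argument is exactly this factorization followed by differentiation at $t=0$ via the product rule, together with the obvious additivity of homogeneity degree under composition.
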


\begin{proof}
 This follows from the identity
 \[ e^{-(w+k+\ell)\sigma}KLe^{w\sigma} = e^{-(w+k+\ell)\sigma}Ke^{(w+\ell)\sigma}e^{-(w+\ell)\sigma}Le^{w\sigma} . \qedhere \]
\end{proof}
We use Lemma~\ref{lem:deltaw_leibniz} repeatedly in Section~\ref{sec:invariant} to compute conformal variations.  Rather than give full details of those computations, where they are rather lengthy, we give three simple consequences of Lemma~\ref{lem:deltaw_leibniz} which illustrate its use.

First, it follows from Lemma~\ref{lem:deltaw_leibniz} and the definition $\Delta=\tr\nabla d$ of the Laplace operator that
\begin{equation}
 \label{eqn:conformal_linearization_laplace}
 \left(\delta_w\Delta\right)(u) = \tr\left((\delta_w\nabla)d + \nabla(\delta_wd)\right)(u) = (n+2w-1)\lp\nabla\sigma,\nabla u\rp + wu\Delta\sigma
\end{equation}
on any $(n+1)$-dimensional Riemannian manifold.

Second, using Lemma~\ref{lem:deltaw_leibniz} and~\eqref{eqn:conformal_linearization_laplace}, we see that
\[ (\delta_w\eta\Delta)(u) = (n+2w-1)\eta\lp\nabla u,\nabla\sigma\rp + w\eta\left(u\Delta\sigma\right) + (w-2)(\eta\sigma)\Delta u . \]
Expanding this yields
\begin{multline}
 \label{eqn:conformal_linearization_etalaplace}
 (\delta_w\eta\Delta)(u) = (n+2w-1)\nabla^2u(\eta,\nabla\sigma) + w(\Delta\sigma)\eta u \\ + (n+2w-1)\nabla^2\sigma(\eta,\nabla u) + (w-2)(\eta\sigma)\Delta u + wu\eta\Delta\sigma .
\end{multline}
In Section~\ref{sec:invariant}, we use Lemma~\ref{lem:gauss-codazzi} to rewrite~\eqref{eqn:conformal_linearization_etalaplace} in terms of intrinsic operators on the boundary.

Third, if $I$ is any natural scalar Riemannian invariant which is homogeneous of degree $k$, we may define its \emph{conformal linearization} by
\[ DI := \left.\frac{\partial}{\partial t}\right|_{t=0} e^{-kt\sigma}I^{e^{2t\sigma}g} , \]
where we again suppress the dependence on $\sigma$ in our notation.  It is clear that when $I$ is regarded as a multiplication operator, $\delta_wI = \delta_0I$ for all $w\in\bR$.  Moreover, $DI=(\delta_0I)(1)$.  Therefore we may use Lemma~\ref{lem:deltaw_leibniz} to compute conformal linearizations.  For example,
\[ D(\Delta J) = \left(\delta_{-2}\Delta\right)(J) + \Delta\left(DJ\right) . \]
Combined with Lemma~\ref{lem:basic_conformal_variations} we obtain
\begin{equation}
 \label{eqn:conformal_linearization_DeltaJ}
 D(\Delta J) = (n-5)\lp\nabla\sigma,\nabla J\rp - 2J\Delta\sigma - \Delta^2\sigma . 
\end{equation}
\section{Covariant operators associated to $L_6$}
\label{sec:invariant}

Finding covariant operators associated to $L_6$ involves two tasks.  First, we must find conformally covariant boundary operators of normal order $j\in\{0,\dotsc,5\}$ and of bidegree $\bigl(-\frac{n-5}{2},-\frac{n-5+2j}{2}\bigr)$.  Second, we must check that the associated bilinear form is symmetric.

Explicit formulas for conformally covariant operators of normal order $j\leq3$ and of bidegree $(w,w-j)$ were given by the first-named author~\cite{Case2015b}.  Specializing to the case $w=-\frac{n-5}{2}$ yields the following result.

\begin{prop}
 \label{prop:B0-3}
 Let $(X^{n+1},g)$ be a Riemannian manifold with umbilic boundary $M$ and define $B_j^5\colon C^\infty(X)\to C^\infty(M)$, $j\in\{0,1,2,3\}$, by
 \begin{align*}
  B_0^5(u) & := u, \\
  B_1^5(u) & := \eta u + \frac{n-5}{2}T_1^5u, \\
  B_2^5(u) & := \Delta u - \frac{4}{3}\oDelta u - \frac{4}{n}H\eta u + \frac{n-5}{2}T_2^5u, \\
  B_3^5(u) & := \eta\Delta u - 4\oDelta\eta u + \frac{n-9}{2n}H\nabla^2u(\eta,\eta) - \frac{3n-19}{2n}H\oDelta u - \frac{4(n-4)}{n}\lp\onabla H,\onabla u\rp \\
   & \quad + S_2^5\eta u + \frac{n-5}{2}T_3^5u
 \end{align*}
 where
 \[ S_2^5 := \frac{3n-7}{2}\oJ - \frac{n-13}{2}P(\eta,\eta) + \frac{3n^2-19n+36}{4n^2}H^2 \]
 and
 \begin{align*}
  T_1^5 & := \frac{1}{n}H, \\
  T_2^5 & = \frac{1}{3}\oJ - P(\eta,\eta) + \frac{n-4}{2n^2}H^2, \\
  T_3^5 & = -\eta J - \frac{4}{n}\oDelta H - \frac{n-9}{2n}HP(\eta,\eta) + \frac{3n-11}{2n}H\oJ + \frac{n^2-5n+12}{4n^3}H^3. 
 \end{align*}
 Then $B_j^5$ is conformally covariant of bidegree $\bigl(-\frac{n-5}{2},-\frac{n-5+2j}{2}\bigr)$.  Moreover, in the critical case $n=5$, it holds that
 \begin{equation}
  \label{eqn:critical-03}
  e^{j\sigma}\hT_j^5 = T_j^5 + B_j^5(\sigma)
 \end{equation}
 for all $1\leq j\leq3$ and all $\sigma\in C^\infty(X)$, where $\hT_j^5$ is defined in terms of $\hg:=e^{2\sigma}g$.
\end{prop}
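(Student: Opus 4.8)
Two routes are available, and I would present both. The quick one: the operators $B_0^5,\dots,B_3^5$ above are precisely the conformally covariant boundary operators of normal order $j\le 3$ and bidegree $(w,w-j)$ constructed by the first author in~\cite{Case2015b}, specialized to the weight $w=-\frac{n-5}{2}$ dictated by $L_6$; with that substitution the first assertion is immediate. For a self-contained verification, the plan is to invoke Branson's criterion that infinitesimal conformal covariance implies conformal covariance: it suffices to show $\delta_w B_j^5=0$ for $w=-\frac{n-5}{2}$ and all $g,\sigma$, and the tools are exactly Lemma~\ref{lem:deltaw_leibniz} (the Leibniz rule for $\delta_w$), Lemma~\ref{lem:basic_conformal_variations} (variations of $g$, $P$, $\nabla$, $H$), the computed variations~\eqref{eqn:conformal_linearization_laplace} and~\eqref{eqn:conformal_linearization_etalaplace} of $\Delta$ and $\eta\Delta$, and the Gauss--Codazzi identities of Lemma~\ref{lem:gauss-codazzi}.

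Concretely I would proceed by increasing normal order. The case $j=0$ is trivial since $B_0^5$ is restriction. For $j=1$ one expands $\delta_w B_1^5=\delta_w\eta+\frac{n-5}{2}\delta_w\bigl(\tfrac1nH\bigr)$ and uses $\delta_w H=n(\eta\sigma)$ together with the variation of $\eta$ acting on a restricted function; the two contributions cancel exactly when the zeroth-order coefficient is $\frac{n-5}{2n}H$. For $j=2$ one combines~\eqref{eqn:conformal_linearization_laplace}, the analogous variation of $\oDelta$ (derived exactly as~\eqref{eqn:conformal_linearization_laplace} but on $M^n$ and with $\sigma$ replaced by $\sigma\rv_M$, restriction being conformally covariant of every weight), the variation of $\frac4nH\eta$, and $\delta_w\bigl(\tfrac{n-5}{2}T_2^5\bigr)$; rewriting $\Delta u$ along $M$ via Lemma~\ref{lem:gauss-codazzi} in terms of $\nabla^2u(\eta,\eta)$, $\oDelta u$, $H\eta u$, one finds that the vanishing of the coefficients of $\nabla^2\sigma(\eta,\eta)$, of $(\oDelta\sigma)\,\eta u$, of $\lp\onabla\sigma,\onabla u\rp$, and of the various curvature--$\sigma$ terms forces precisely the stated $T_2^5$. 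The case $j=3$ is the same in spirit, now using~\eqref{eqn:conformal_linearization_etalaplace} and the remaining identities of Lemma~\ref{lem:gauss-codazzi} together with umbilicity ($A_0\equiv0$, $A=\tfrac1nH\og$, and~\eqref{eqn:umbilic_consequence}) to express everything through intrinsic operators on $M$ applied to $u\rv_M$ and $(\eta u)\rv_M$; matching coefficients then determines $S_2^5$ and $T_3^5$.

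For the critical case $n=5$, where the terms $\tfrac{n-5}{2}T_j^5$ drop out of $B_j^5$, I would argue by continuation in the dimension. For $1\le j\le3$ write $B_j^5=L_j+\tfrac{n-5}{2}T_j^5$, where $L_j$ is the differential part displayed above; since $L_j$ carries no zeroth-order term, $L_j(1)=0$, hence $B_j^5(1)=\tfrac{n-5}{2}T_j^5$ and likewise $\hB_j^5(1)=\tfrac{n-5}{2}\hT_j^5$. Applying the conformal covariance already established to $u\equiv1$ gives, for $n\ne5$,
\[
 \tfrac{n-5}{2}\hT_j^5=e^{-\frac{n+2j-5}{2}\sigma}\,B_j^5\bigl(e^{\frac{n-5}{2}\sigma}\bigr)=e^{-\frac{n+2j-5}{2}\sigma}\Bigl(L_j\bigl(e^{\frac{n-5}{2}\sigma}\bigr)+\tfrac{n-5}{2}T_j^5\,e^{\frac{n-5}{2}\sigma}\Bigr).
\]
Because $L_j$ annihilates constants, $L_j(e^{\beta\sigma})=\beta\,e^{\beta\sigma}\,q_j(\sigma;\beta)$ for a natural quantity $q_j(\sigma;\beta)$ that is polynomial in $\beta:=\tfrac{n-5}{2}$ with $q_j(\sigma;0)=L_j(\sigma)$. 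Substituting, cancelling $e^{\frac{n-5}{2}\sigma}$ against part of $e^{-\frac{n+2j-5}{2}\sigma}$ (the leftover exponent being $-j\sigma$), and dividing by $\tfrac{n-5}{2}$ --- legitimate since $n\ne5$ --- yields $e^{j\sigma}\hT_j^5=q_j(\sigma;\beta)+T_j^5$. Both sides are natural expressions depending rationally on $n$ with no pole at $n=5$, and they agree for all $n\ge6$, so they agree at $n=5$; there $\beta=0$, $q_j(\sigma;0)=L_j(\sigma)\rv_{n=5}=B_j^5(\sigma)\rv_{n=5}$, which is exactly~\eqref{eqn:critical-03}.

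The only real difficulty is bookkeeping: the verification of $\delta_w B_3^5=0$ involves $\nabla^3u$ restricted to the boundary and a large number of curvature terms, and it is there that the exact coefficients in $S_2^5$ and $T_3^5$ are pinned down. Conceptually nothing beyond Lemmas~\ref{lem:basic_conformal_variations}, \ref{lem:deltaw_leibniz} and~\ref{lem:gauss-codazzi} is required; and if one simply cites~\cite{Case2015b} for the covariance statement, only the short continuation argument above remains to be supplied.
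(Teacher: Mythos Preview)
Your quick route is exactly what the paper does: it simply cites \cite[Propositions~3.1, 3.3, 3.8]{Case2015b} for the conformal covariance of $B_0^5,\dots,B_3^5$ at weight $w=-\frac{n-5}{2}$, and \cite[Proposition~6.1]{Case2015b} for the critical transformation law~\eqref{eqn:critical-03}. So on that front you match the paper.

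Your self-contained verification sketch and, in particular, your continuation-in-dimension argument for~\eqref{eqn:critical-03} are correct but take a different route from what the paper has in mind. Looking at how the paper handles the analogous critical statements for $B_4^5$ and $B_5^5$ (Propositions~\ref{prop:B4} and~\ref{prop:B5}), the method there is infinitesimal: one shows $D(T_j^5)=\mathring{B}_j^5(\sigma)$ directly and then integrates, and \cite[Proposition~6.1]{Case2015b} almost certainly does the same. Your argument instead applies the already-established covariance to $u\equiv1$ for $n\neq5$, divides out the factor $\tfrac{n-5}{2}$, and passes to $n=5$ by rationality in $n$. This is clean and avoids recomputing any conformal linearizations; the price is the (mild) need to check that all coefficients are rational in $n$ with no pole at $n=5$, which is clear by inspection of the formulas for $L_j$ and $T_j^5$. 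One small point worth tightening: when you write $L_j(e^{\beta\sigma})=\beta e^{\beta\sigma}q_j(\sigma;\beta)$, the factor $e^{\beta\sigma}$ should be read as $e^{\beta\sigma\rvert_M}$ since $L_j$ lands in $C^\infty(M)$, and $q_j$ depends on $n$ both through $\beta=\tfrac{n-5}{2}$ and through the coefficients of $L_j$; your conclusion $q_j(\sigma;0)=L_j(\sigma)\rvert_{n=5}$ uses both specializations simultaneously, which is fine but should be stated.
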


\begin{proof}
 That $B_0^5$ and $B_1^5$ are conformally covariant of bidegree $\bigl(-\frac{n-5}{2},-\frac{n-5}{2}\bigr)$ and $\bigl(-\frac{n-5}{2},-\frac{n-3}{2}\bigr)$, respectively, follows from~\cite[Proposition~3.1]{Case2015b}.  That $B_2^5$ is conformally covariant of bidegree $\bigl(-\frac{n-5}{2},-\frac{n-1}{2}\bigr)$ follows from~\cite[Proposition~3.3]{Case2015b}.  That $B_3^5$ is conformally covariant of bidegree $\bigl(-\frac{n-5}{2},-\frac{n+1}{2}\bigr)$ follows from~\cite[Proposition~3.8]{Case2015b}.  The final claim~\eqref{eqn:critical-03} follows as in~\cite[Proposition~6.1]{Case2015b}.
\end{proof}

We now proceed to derive formulas for the boundary operators of normal order $4$ and $5$ associated to $L_6$.  These formulas were derived by computing conformal variations of various boundary operators.  We expect that these operators agree with the operators constructed by Gover and Peterson~\cite{GoverPeterson2018}.

The boundary operator of normal order $4$ associated to $L_6$ is as follows:

\begin{prop}
 \label{prop:B4}
 Let $(X^{n+1},g)$ be a Riemannian manifold with {\coronal} boundary $M$ and define $B_4^5\colon C^\infty(X)\to C^\infty(M)$ by
 \begin{align*}
	 B_4^5(u) & := -\Delta^2u - 4\oDelta\Delta u + 8\oDelta^2u + \frac{4}{n}H\eta\Delta u + \frac{16}{n}H\oDelta\eta u \\
   & \quad + \left((3n-5)\oJ + (n-11)P(\eta,\eta) - \frac{n^2-5n+18}{2n^2}H^2\right)\nabla^2u(\eta,\eta) \\
   & \quad - \left( 3(n-3)\oJ - (3n-13)P(\eta,\eta) + \frac{3n^2-23n+66}{2n^2}H^2 \right)\oDelta u \\
   & \quad + 8\odelta\left(\oP(\onabla u)\right) + \frac{48}{n}\lp\onabla H,\onabla\eta u\rp \\
   & \quad - \left\lp (3n-11)\onabla\oJ - (5n-29)\onabla P(\eta,\eta) + \frac{5n^2-53n+128}{2n^2}\onabla H^2, \onabla u\right\rp \\
   & \quad + S_3^5\eta u + \frac{n-5}{2}T_4^5u ,
 \end{align*}
 where
 \begin{multline*}
  S_3^5 := (n-9)\eta J + \frac{16}{n}\oDelta H \\ + \frac{3n^2-15n+10}{n}H\oJ + \frac{n^2-5n+26}{n}HP(\eta,\eta) - \frac{n^3-7n^2+12n-24}{2n^3}H^3
 \end{multline*}
 and
 \begin{align*}
  T_4^5 & := \Delta J - 4\oDelta\oJ + 4\oDelta P(\eta,\eta) - \frac{4(n-4)}{n^2}H\oDelta H - \frac{4}{n}H\eta J - 3(n-1)\oJ P(\eta,\eta) \\
   & \quad + \frac{n^2-3n+18}{2n^2}H^2P(\eta,\eta) + \frac{3n^2-13n+2}{2n^2}H^2\oJ - \frac{4(n-6)}{n^2}\lv\onabla H\rv^2 - 4\lv\oP\rv^2 \\
   & \quad + \frac{3(n-1)}{2}\oJ^2 - \frac{n-9}{2}P(\eta,\eta)^2 - \frac{n^3-5n^2+4n-24}{8n^4}H^4 .
 \end{align*}
 Then $B_4^5$ is conformally covariant of bidegree $\bigl(-\frac{n-5}{2},-\frac{n+3}{2}\bigr)$.  Moreover, in the critical case $n=5$, it holds that
 \begin{equation}
  \label{eqn:critical-4}
  e^{4\sigma}\hT_4^5 = T_4^5 + B_4^5(\sigma)
 \end{equation}
 for all $\sigma\in C^\infty(X)$, where $\hT_4^5$ is defined in terms of $\hg:=e^{2\sigma}g$.
\end{prop}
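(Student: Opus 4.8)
The plan is to verify \emph{infinitesimal} conformal covariance and appeal to Branson's criterion recalled in Section~\ref{sec:bg}: since $B_4^5$ is a natural Riemannian operator which is homogeneous of degree $k=-4$, it suffices to show that $\delta_w(B_4^5)_{g,\sigma}=0$ for all $g$ and $\sigma$, where $w=-\frac{n-5}{2}$ (so that $w+k=-\frac{n+3}{2}$, matching the claimed bidegree). The computation proceeds by writing $B_4^5$ as a sum of compositions of the basic operators and applying the Leibniz rule of Lemma~\ref{lem:deltaw_leibniz} together with the basic variations of Lemma~\ref{lem:basic_conformal_variations}. The fourth-order part $-\Delta^2u-4\oDelta\Delta u+8\oDelta^2u$ contributes, via $\delta_w\Delta$ and $\delta_{w-2}\Delta$ as in~\eqref{eqn:conformal_linearization_laplace} and $\delta_w(\eta\Delta)$ as in~\eqref{eqn:conformal_linearization_etalaplace}, a large collection of terms of lower order in $u$ paired with up to three derivatives of $\sigma$; the $H$-weighted third-order terms, the curvature-weighted second- and first-order terms, and the zeroth-order term $\frac{n-5}{2}T_4^5u$ each contribute their own variations, and one must show that the total vanishes.

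The core of the argument is a bookkeeping reduction. First I would rewrite every normal-derivative-heavy expression that appears --- $\nabla^2u(\eta,\eta)$, $\eta\Delta u$, $\Delta^2u$, the corresponding $\sigma$-expressions, and the higher normal derivatives produced by the variations --- in terms of $\oDelta$, $\onabla$, $\eta$, and extrinsic data, iterating Lemma~\ref{lem:gauss-codazzi}. Second I would invoke Lemma~\ref{lem:coronal-corollary}: on a {\coronal} boundary one has $A_0\equiv0$, $W(\eta,\cdot,\cdot,\cdot)\equiv0$, $C(\eta,\cdot,\cdot)\equiv0$, and $B(\eta,\cdot)\equiv0$, so the Fialkow tensor and all the curvature terms that would otherwise obstruct covariance drop out and the structure equations of Lemma~\ref{lem:gauss-codazzi} collapse substantially (e.g.\ $A=\frac{1}{n}H\og$ and $P(\eta,\cdot)\rv_{TM}=-\frac{1}{n}\onabla H$). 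Then I would group the resulting terms by their order in $u$ and by the type of $\sigma$-derivative ($\nabla^3\sigma$-type, $\oDelta\eta\sigma$-type, $\nabla^2\sigma(\eta,\eta)$-type, $\lp\onabla\sigma,\onabla(\cdot)\rp$-type, $(\eta\sigma)(\cdot)$-type, multiplication by $\Delta\sigma$ or $\eta\Delta\sigma$, and so on) and check that each group vanishes. The higher-order $\sigma$-cancellations force the coefficients $4$, $8$, $\frac{4}{n}$, $\frac{16}{n}$ and the $\oJ$, $P(\eta,\eta)$, $H^2$ combinations multiplying $\nabla^2u(\eta,\eta)$ and $\oDelta u$; the first-order-in-$u$ anomaly is cancelled by the variations of the first-order terms together with $S_3^5\eta u$, and the zeroth-order anomaly is cancelled by the variation of $\frac{n-5}{2}T_4^5u$ --- matching this last anomaly is exactly the computation of the conformal linearization $DT_4^5$ via identities such as~\eqref{eqn:conformal_linearization_DeltaJ}, which is how the formula for $T_4^5$ is read off in the first place.

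For the critical case $n=5$ the term $\frac{n-5}{2}T_4^5u$ vanishes identically, so the operator $B_4^5$ at $n=5$ is not conformally covariant on its own; instead~\eqref{eqn:critical-4} follows by analytic continuation in the dimension, exactly as in~\cite[Proposition~6.1]{Case2015b}. Concretely, for general $n$ every derivative term in $B_4^5$ annihilates constants, so $B_4^5(1)=\frac{n-5}{2}T_4^5$; conformal covariance then gives $e^{\frac{n+3}{2}\sigma}\,\frac{n-5}{2}\,\hT_4^5=B_4^5\!\left(e^{\frac{n-5}{2}\sigma}\right)$, and since the coefficients of $B_4^5$ depend polynomially on $n$ one may divide by $\frac{n-5}{2}$, expand $e^{\frac{n-5}{2}\sigma}=1+\frac{n-5}{2}\sigma+O\!\left((n-5)^2\right)$, and let $n\to5$ to obtain $e^{4\sigma}\hT_4^5=T_4^5+B_4^5(\sigma)$ with $B_4^5$ the $n=5$ operator.

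The step I expect to be the main obstacle is simply the size of the computation in the second paragraph: the conformal variation of a fourth-order boundary operator carrying this many lower-order terms produces a large number of monomials in the boundary jets of $u$ and $\sigma$, and confirming their cancellation requires careful and lengthy use of Lemma~\ref{lem:gauss-codazzi} and Lemma~\ref{lem:coronal-corollary}. A secondary but important point is to ensure the reduction uses \emph{only} the {\coronal} hypotheses --- umbilicity together with the vanishing of $W(\eta,\cdot,\eta,\cdot)$ and $C(\eta,\cdot,\cdot)$ --- and no stronger assumption, so that the proposition, and in particular the critical-dimension identity~\eqref{eqn:critical-4}, holds in the stated generality.
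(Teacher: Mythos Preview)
Your proposal is correct and follows essentially the same approach as the paper: compute the infinitesimal conformal variation of the fourth-order part, reduce via Lemma~\ref{lem:gauss-codazzi} and the {\coronal} hypotheses, and match the remaining anomaly against $D(T_4^5)$. The only minor difference is in the critical case: rather than analytic continuation in $n$, the paper isolates $\mathring{B}_4^5:=B_4^5-\frac{n-5}{2}T_4^5$ and records the two identities $(\delta_{-\frac{n-5}{2}}\mathring{B}_4^5)(u)=-\frac{n-5}{2}\,u\,\mathring{B}_4^5(\sigma)$ and $D(T_4^5)=\mathring{B}_4^5(\sigma)$, which at $n=5$ (where $B_4^5=\mathring{B}_4^5$) integrate directly to~\eqref{eqn:critical-4}.
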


\begin{proof}
 Note that Lemma~\ref{lem:deltaw_leibniz} and~\eqref{eqn:conformal_linearization_laplace} imply that
 \begin{equation}
  \label{eqn:conformal_linearization_pre_bilaplace}
  \delta_{-\frac{n-5}{2}}(\Delta^2)(u) = -\frac{n-1}{2}(\Delta u)(\Delta\sigma) + 4\Delta\lp\nabla u,\nabla\sigma\rp - \frac{n-5}{2}\Delta\left(u\Delta\sigma\right) .
 \end{equation}
 Expanding the last two terms using the Bochner formula and product rule yields
 \begin{equation}
  \label{eqn:conformal_linearization_bilaplace}
  \begin{split}
   \delta_{-\frac{n-5}{2}}(\Delta^2)(u) & = 4\lp\nabla\sigma,\nabla\Delta u\rp - (n-3)(\Delta u)(\Delta\sigma) + 8\lp\nabla^2u,\nabla^2\sigma\rp \\
    & \quad + 8(n-1)P(\nabla u,\nabla\sigma) + 8J\lp\nabla u,\nabla\sigma\rp \\
    & \quad - (n-9)\lp\nabla u,\nabla\Delta\sigma\rp - \frac{n-5}{2}u\Delta^2\sigma .
  \end{split}
 \end{equation}
 We rewrite the first summand on the right-hand side using the identity
 \begin{equation}
  \label{eqn:B45-rewrite-lot}
  4\lp\nabla\sigma,\nabla\Delta u\rp = 4(\eta\sigma)\eta\Delta u + 4\lp\onabla\Delta u,\onabla\sigma\rp .
 \end{equation}
 Similar to~\eqref{eqn:conformal_linearization_bilaplace}, it holds that
 \begin{equation}
  \label{eqn:conformal_linearization_olaplacelaplace}
  \begin{split}
   \delta_{-\frac{n-5}{2}}(\oDelta\Delta)(u) & = 4\lp\onabla\oDelta u,\onabla\sigma\rp - \lp\onabla\Delta u,\onabla\sigma\rp + 4(\eta\sigma)\oDelta\eta u + 8\lp\onabla^2u,\onabla^2\sigma\rp \\
    & \quad + 8\lp\onabla\eta u,\onabla\eta\sigma\rp - \frac{n-5}{2}(\oDelta u)\Delta\sigma - \frac{n-1}{2}(\Delta u)\oDelta\sigma \\
    & \quad + 4\lp\onabla u,\onabla\oDelta\sigma\rp - (n-5)\lp\onabla u,\onabla\Delta\sigma\rp + 4(\eta u)\oDelta\eta\sigma \\
    & \quad + 8(n-2)\oP(\onabla u,\onabla\sigma) + 8\oJ\lp\onabla u,\onabla\sigma\rp - \frac{n-5}{2}u\oDelta\Delta\sigma
  \end{split}
 \end{equation}
 and also
 \begin{equation}
  \label{eqn:conformal_linearization_biolaplace}
  \begin{split}
   \delta_{-\frac{n-5}{2}}(\oDelta^2)(u) & = 2\lp\onabla\oDelta u,\onabla\sigma\rp + 6\lp\onabla^2u,\onabla^2\sigma\rp - (n-3)(\oDelta u)(\oDelta\sigma) \\
    & \quad + 6(n-2)\oP(\onabla u,\onabla\sigma) + 6\oJ\lp\onabla u,\onabla\sigma\rp \\
    & \quad - (n-8)\lp\onabla u,\onabla\oDelta\sigma\rp - \frac{n-5}{2}u\oDelta^2\sigma .
  \end{split}
 \end{equation}
 Combining~\eqref{eqn:conformal_linearization_bilaplace}, \eqref{eqn:B45-rewrite-lot},  \eqref{eqn:conformal_linearization_olaplacelaplace}, and~\eqref{eqn:conformal_linearization_biolaplace} yields
 \begin{align*}
  \MoveEqLeft[4] \delta_{-\frac{n-5}{2}}\left(-\Delta^2 - 4\oDelta\Delta + 8\oDelta^2\right)(u) \\
  & = -4(\eta\sigma)\eta\Delta u - 16(\eta\sigma)\oDelta\eta u - 8\lp\nabla^2u,\nabla^2\sigma\rp + 16\lp\onabla^2\sigma,\onabla^2u\rp \\
   & \quad + (n-3)(\Delta u)(\Delta\sigma) + 2(n-5)(\oDelta u)\Delta\sigma + 2(n-1)(\Delta u)\oDelta\sigma \\
   & \quad - 8(n-3)(\oDelta u)\oDelta\sigma - 32\lp\onabla\eta u,\onabla\eta\sigma\rp \\
   & \quad - 8(n-1)P(\nabla u,\nabla\sigma) - 8J\lp\nabla u,\nabla\sigma\rp + (n-9)(\eta\Delta\sigma)\eta u \\
   & \quad - 8(n-6)\lp\onabla u,\onabla\oDelta\sigma\rp + (5n-29)\lp\onabla u,\onabla\Delta\sigma\rp - 16(\eta u)\oDelta\eta\sigma  \\
   & \quad + 16(n-2)\oP(\nabla u,\onabla\sigma) + 16\oJ\lp\onabla u,\onabla\sigma\rp \\
   & \quad - \frac{n-5}{2}u\left(-\Delta^2\sigma - 4\oDelta\Delta\sigma + 8\oDelta^2\sigma\right) .
 \end{align*}
 The key point here is that there is no third-order term in $u$ which involves $\onabla\sigma$.  Combining the previous display with Lemma~\ref{lem:gauss-codazzi} and the conformal variations of the terms of order at most three from~\cite[Section~3]{Case2015b}, we see that the operator
 \[ \mathring{B}_4^5 := B_4^5 - \frac{n-5}{2}T_4^5, \]
 which has the property that $\mathring{B}_4^5(1)=0$, has conformal variation
 \begin{equation}
  \label{eqn:conformal_linearization_ring45}
  (\delta_{-\frac{n-5}{2}}\mathring{B}_4^5)(u) = -\frac{n-5}{2}u\mathring{B}_4^5(\sigma) .
 \end{equation}
 
 Next, applications of Lemma~\ref{lem:basic_conformal_variations} and Lemma~\ref{lem:deltaw_leibniz} imply that
 \begin{align}
  \label{eqn:conformal_linearization_oDeltaoPnn} D\left(\oDelta P(\eta,\eta)\right) & = -\oDelta\nabla^2\sigma(\eta,\eta) + (n-6)\lp\onabla P(\eta,\eta),\onabla\sigma\rp - 2P(\eta,\eta)\oDelta\sigma , \\
  \label{eqn:conformal_linearizaiton_oDeltaoJ} D(\oDelta\oJ) & = -\oDelta^2\sigma + (n-6)\lp\onabla\oJ,\onabla\sigma\rp - 2\oJ\oDelta\sigma .
 \end{align}
 Combining~\eqref{eqn:conformal_linearization_DeltaJ}, \eqref{eqn:conformal_linearization_oDeltaoPnn} and~\eqref{eqn:conformal_linearizaiton_oDeltaoJ} with the conformal linearizations from~\cite[Section~3]{Case2015b} yields
 \begin{equation}
  \label{eqn:conformal_linearization_T45}
  D(T_4^5) = \mathring{B}_4^5(\sigma) .
 \end{equation}
 It follows from~\eqref{eqn:conformal_linearization_ring45}, \eqref{eqn:conformal_linearization_T45} and the observation $\delta_wI=DI$ for any scalar invariant $I$ that
 \[ \delta_{-\frac{n-5}{2}}B_4^5 = 0 . \]
 In particular, $B_4^5$ is conformally covariant of bidegree $\bigl(-\frac{n-5}{2},-\frac{n+3}{2}\bigr)$, as claimed.
 
 Finally, if $n=5$, then $B_4^5=\mathring{B}_4^5$.  It then follows from~\eqref{eqn:conformal_linearization_ring45} and~\eqref{eqn:conformal_linearization_T45} that~\eqref{eqn:critical-4} holds.
\end{proof}

The boundary operator of normal order $5$ associated to $L_6$ is as follows:

\begin{prop}
 \label{prop:B5}
 Let $(X^{n+1},g)$ be a Riemannian manifold with {\coronal} boundary $M$ and define $B_5^5\colon C^\infty(X)\to C^\infty(M)$ by
 \begin{align*} 
   B_5^5 u &= \eta\Delta^2u + \frac{4}{3}\oDelta\eta\Delta u + \frac{8}{3}\oDelta^2\eta u + \frac{n+3}{2n}H\Delta^2u + \frac{2(n-9)}{3n} H\oDelta\nabla^2u(\eta,\eta) \\
    & \quad - \frac{4}{n}H\nabla^2(\Delta u)(\eta,\eta) + \frac{2(3n-11)}{3n}H\oDelta^2u \\
    & \quad - \left(\frac{5n-7}{3}\oJ + (n-7)P(\eta,\eta) - \frac{n^2-9n+10}{2n^2}H^2 \right)\eta\Delta u\\
    & \quad - \left( \frac{2(5n-9)}{3}\oJ + \frac{2(n-13)}{3}P(\eta,\eta) - \frac{3n^2-19n+12}{3n^2}H^2 \right)\oDelta\eta u \\
    & \quad + 8\eta\lp P,\nabla^2u\rp + 16\odelta\left(\oP(\onabla\eta u)\right) + \frac{4(n-12)}{3n}\lp\onabla H,\onabla\nabla^2u(\eta,\eta)\rp \\
    & \quad + \frac{4(5n-28)}{3n}\lp\onabla H,\onabla\oDelta u\rp + R_{1,3}^5\nabla^2u(\eta,\eta) + \frac{4(3n-7)}{n}H\odelta\left(\oP(\onabla u)\right) \\
    & \quad + \frac{8(2n-14)}{3n}\lp\onabla^2H,\onabla^2u\rp + R_{2,3}^5\oDelta u \\
    & \quad - \left\lp \onabla\left( \frac{15n-47}{3}\oJ + \frac{7n-79}{3}P(\eta,\eta) - \frac{15n^2-139n+168}{6n^2}H^2\right), \onabla\eta u \right\rp \\
    & \quad + \lp\sigma_4^5,\onabla u\rp + S_4^5\eta u + \frac{n-5}{2}T_5^5u ,
 \end{align*}
 where
 \begin{align*}
  R_{1,3}^5 & := -2(n-6)\eta J + \frac{2(n-9)}{3n}\oDelta H - \frac{5n^2-28n+15}{6n}H\oJ \\
   & \quad - \frac{n^2-16n+55}{2n}HP(\eta,\eta) + \frac{n^3-6n^2+11n-30}{4n^3}H^3 , \\
  R_{2,3}^5 & := -\frac{5n-19}{3}\eta J + \frac{2(5n-21)}{3n}\oDelta H - \frac{5n^2-20n+7}{2n}H\oJ \\
   & \quad - \frac{5(n-3)(n-5)}{6n}HP(\eta,\eta) + \frac{5n^3 - 26n^2 + 23n + 6}{12n^3}H^3 ,
 \end{align*}
 and
 \begin{align*}
  S_{4}^5 & := -\frac{n-5}{2}(\Delta J) - \frac{2(3n-11)}{3}\oDelta\oJ - (n-9)\nabla^2J(\eta,\eta) - \frac{2(n-13)}{3}\oDelta\left(P(\eta,\eta)\right) \\
   & \quad - \frac{16}{n}H\nabla_\eta P(\eta,\eta) + \frac{6n^2-38n+72}{3n^2}H\oDelta H + \frac{6n^2-62n+180}{3n^2}\lv\onabla H\rv^2 \\
   & \quad - \frac{3n^2-20n+13}{2n}H\eta J - \frac{3n^3-24n^2+103n-130}{4n^2}H^2P(\eta,\eta) \\
   & \quad - \frac{15n^3-68n^2-5n+42}{12n^2}H^2\oJ + \frac{5n^2-54n+49}{6}\oJ P(\eta,\eta) \\
   & \quad + \frac{5n^4-26n^3+17n^2-84n+120}{16n^4}H^4 + \frac{15n^2-50n-29}{12}\oJ^2 \\
   & \quad + \frac{n^2-22n+149}{4}P(\eta,\eta)^2 - 2(3n-11)\lv\oP\rv^2, \\
  \sigma_4^5 & := \frac{16(n-6)}{3n}\onabla\oDelta H + \frac{16n^2-96n-64}{3n}\oP(\onabla H) - \frac{7n-47}{3}\onabla\eta J \\
   & \quad - \frac{15n^2-70n+119}{6n}H\onabla\oJ - \frac{2(5n^2-45n+92)}{3n}\oJ\onabla H \\
   & \quad - \frac{2n^2-34n+168}{3n}P(\eta,\eta)\onabla H - \frac{7n^2-86n+303}{6n}H\onabla P(\eta,\eta) \\
   & \quad + \frac{3n^3-32n^2+117n-144}{6n^3}\onabla H^3
 \end{align*}
 and
 \begin{align*}
  T_5^5 & := -\eta\Delta J - \frac{4}{3}\oDelta\eta J + \frac{8}{3n}\oDelta^2H + \frac{4}{n}H\nabla^2J(\eta,\eta) - \frac{n+3}{2n}H\Delta J - \frac{2(3n-7)}{3n}H\oDelta\oJ \\
   & \quad - \frac{2(n-9)}{3n}H\oDelta P(\eta,\eta) - \frac{4(n-12)}{3n}\lp\onabla H,\onabla P(\eta,\eta)\rp - \frac{4(3n-16)}{3n}\lp\onabla H,\onabla\oJ\rp \\
   & \quad + \frac{5n-1}{3}\oJ\eta J + (n-5)P(\eta,\eta)\eta J - \frac{8}{n^2}H^2\nabla_\eta P(\eta,\eta) - 4\eta\lv P\rv^2 \\
   & \quad - \frac{n^2-7n-6}{2n^2}H^2\eta J - \frac{2(n-9)}{3n}P(\eta,\eta)\oDelta H + \frac{n^2-5n+12}{n^3}H^2\oDelta H \\
   & \quad + \frac{16}{n}\lp\oP,\onabla^2H\rp - \frac{10(n-1)}{3n}\oJ\oDelta H + \frac{15n^2-10n-37}{12n}H\oJ^2 \\
   & \quad + \frac{(n-5)(n-9)}{4n}HP(\eta,\eta)^2 - \frac{6(n-1)}{n}H\lv\oP\rv^2 + \frac{(n-5)(5n+3)}{6n}H\oJ P(\eta,\eta) \\
   & \quad - \frac{n^3-4n^2+33n-30}{4n^3}H^3P(\eta,\eta) + \frac{2(n-2)(n-7)}{n^3}H\lv\onabla H\rv^2 \\
   & \quad - \frac{5n^3-8n^2-19n-42}{12n^3}H^3\oJ + \frac{n^4-2n^3-3n^2-52n+24}{16n^5}H^5 .
 \end{align*}
 Then $B_5^5$ is conformally covariant of bidegree $\bigl(-\frac{n-5}{2},\frac{n+5}{2}\bigr)$.  Moreover, in the critical case $n=5$, it holds that
 \begin{equation}
  \label{eqn:critical-5}
  e^{5\sigma}\hT_5^5 = T_5^5 + B_5^5(\sigma)
 \end{equation}
 for all $\sigma\in C^\infty(X)$, where $\hT_5^5$ is defined in terms of $\hg:=e^{2\sigma}g$.
\end{prop}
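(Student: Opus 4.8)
The proof will follow the template established in the proof of Proposition~\ref{prop:B4}, carried out one normal order higher. By Branson's theorem it suffices to show the infinitesimal conformal covariance $\delta_{-\frac{n-5}{2}}B_5^5 = 0$; the critical-dimension identity~\eqref{eqn:critical-5} will then fall out exactly as in~\cite[Proposition~6.1]{Case2015b}. Set $\mathring{B}_5^5 := B_5^5 - \frac{n-5}{2}T_5^5$, so that $\mathring{B}_5^5(1)=0$, and split the verification into: (i) the operator identity $(\delta_{-\frac{n-5}{2}}\mathring{B}_5^5)(u) = -\frac{n-5}{2}\,u\,\mathring{B}_5^5(\sigma)$ for all $u,\sigma\in C^\infty(X)$, and (ii) the scalar identity $D(T_5^5) = \mathring{B}_5^5(\sigma)$. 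Granting (i) and (ii), the observation that $\delta_w I = DI$ for any natural scalar invariant $I$ (regarded as a multiplication operator) forces $\delta_{-\frac{n-5}{2}}B_5^5 = \delta_{-\frac{n-5}{2}}\mathring{B}_5^5 + \frac{n-5}{2}\delta_{-\frac{n-5}{2}}T_5^5 = -\frac{n-5}{2}u\mathring{B}_5^5(\sigma) + \frac{n-5}{2}u\,D(T_5^5) = 0$; and when $n=5$ one has $B_5^5 = \mathring{B}_5^5$, so~\eqref{eqn:critical-5} is precisely (ii).

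For step (i), I would first compute the infinitesimal conformal variation of the leading triple $\eta\Delta^2 + \frac{4}{3}\oDelta\eta\Delta + \frac{8}{3}\oDelta^2\eta$. This is done by repeated application of the Leibniz rule (Lemma~\ref{lem:deltaw_leibniz}) with the appropriate density weights, using the variations of $\Delta$, $\eta\Delta$, $\Delta^2$, $\oDelta\Delta$, and $\oDelta^2$ already recorded in~\eqref{eqn:conformal_linearization_laplace}, \eqref{eqn:conformal_linearization_etalaplace}, \eqref{eqn:conformal_linearization_bilaplace} and in the proof of Proposition~\ref{prop:B4}, together with the elementary facts $(\delta_w\eta)(u) = w\,(\eta\sigma)\,u\rv_M$ and $\delta_w(\oDelta)(u) = (n+2w-2)\lp\onabla\sigma,\onabla u\rp + w\,u\,\oDelta\sigma$. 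The crucial structural point, exactly as in Proposition~\ref{prop:B4}, is that the coefficients $\frac{4}{3}$ and $\frac{8}{3}$ are the unique choice for which \emph{all} terms that are of order four in $u$ and carry a tangential derivative $\onabla\sigma$ of $\sigma$ cancel; what survives at order four in $u$ is only of the schematic form $(\eta\sigma)$ times an operator of order at most four, plus lower-order remainders. One then reduces every remaining normal derivative appearing in the variation --- $\nabla^3u(\eta,\eta,\eta)$, $\nabla^2(\Delta u)(\eta,\eta)$, $\oDelta\nabla^2u(\eta,\eta)$, $\eta\Delta^2u$, and so on --- to intrinsic boundary data by iterating Lemma~\ref{lem:gauss-codazzi} and the Bochner formula, and, because the boundary is {\coronal}, invokes Lemma~\ref{lem:coronal-corollary} to discard every occurrence of $W(\eta,\cdot,\cdot,\cdot)$, $C(\eta,\cdot,\cdot)$, and $B(\eta,\cdot)$. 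Finally one adds in the conformal variations of the terms of normal order at most four, which are available from Proposition~\ref{prop:B4}, Proposition~\ref{prop:B0-3}, and~\cite[Section~3]{Case2015b}, and collects coefficients term by term; the residual is $-\frac{n-5}{2}u\,\mathring{B}_5^5(\sigma)$, giving (i).

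For step (ii), I would apply the conformal linearization operator $D$ to $T_5^5$ using $DI = (\delta_0 I)(1)$ and Lemma~\ref{lem:deltaw_leibniz}. Each summand of $T_5^5$ is then handled through the linearizations of its constituent factors: $D(\Delta J)$ from~\eqref{eqn:conformal_linearization_DeltaJ}; $D(\oDelta\oJ)$ and $D(\oDelta P(\eta,\eta))$ as in~\eqref{eqn:conformal_linearizaiton_oDeltaoJ}--\eqref{eqn:conformal_linearization_oDeltaoPnn}; the new linearizations $D(\eta\Delta J)$ and $D(\nabla^2J(\eta,\eta))$, obtained the same way; the contributions of the prefactor $H$ via $\delta_w H = n(\eta\sigma)$; and the already-known linearizations of $\oJ$, $P(\eta,\eta)$, $\lv\oP\rv^2$, $\lv\onabla H\rv^2$, and the powers $H^k$ from~\cite[Section~3]{Case2015b}. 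Matching the outcome against $\mathring{B}_5^5(\sigma)$ closes the argument, and specializing $n=5$ yields~\eqref{eqn:critical-5}.

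The one genuine difficulty is the scale of the bookkeeping. At normal order five one must control a large number of curvature-weighted terms --- in particular all of the $H^k$, $H^k\oJ$, $H^kP(\eta,\eta)$, and mixed gradient terms appearing in $S_4^5$, $\sigma_4^5$, $R_{1,3}^5$, $R_{2,3}^5$, and $T_5^5$ --- and one needs several higher-order commutator and Bochner-type reductions along $M$ (for instance the boundary expansions of $\eta\Delta^2u$ and of $\oDelta\nabla^2u(\eta,\eta)$) that are not written out verbatim in Section~\ref{sec:bg}. These follow from iterating Lemma~\ref{lem:gauss-codazzi}, the Bochner formula, and commutation of covariant derivatives, so no new idea is required, but the verification is lengthy; this is precisely the computation that the tractor approach of~\cite{GoverPeterson2018} organizes differently but does not obviously render symmetric.
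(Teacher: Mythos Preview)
Your proposal is correct and follows essentially the same approach as the paper: split $B_5^5$ into $\mathring{B}_5^5$ and $\tfrac{n-5}{2}T_5^5$, compute the conformal variation of the leading triple $\eta\Delta^2 + \tfrac{4}{3}\oDelta\eta\Delta + \tfrac{8}{3}\oDelta^2\eta$ (noting that the coefficients kill the fourth-order-in-$u$ terms carrying $\onabla\sigma$), reduce via Lemma~\ref{lem:gauss-codazzi} and Lemma~\ref{lem:coronal-corollary}, and then verify $D(T_5^5)=\mathring{B}_5^5(\sigma)$ using the linearizations of $\eta\Delta J$, $\oDelta\eta J$, $\oDelta^2 H$, and the lower-order invariants. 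The only small imprecision is your claim that~\eqref{eqn:critical-5} ``is precisely (ii)'' when $n=5$: (ii) is the \emph{infinitesimal} statement $D(T_5^5)=\mathring{B}_5^5(\sigma)$, and passing to the finite transformation law requires the same Branson-type integration argument used throughout; the paper makes exactly the same elision.
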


\begin{proof}
 Using~\eqref{eqn:conformal_linearization_pre_bilaplace}, we deduce that
 \begin{equation}
  \label{eqn:conformal_linearization_etabilaplace}
  \begin{split}
   \delta_{-\frac{n-5}{2}}(\eta\Delta^2)(u) & = 4\lp\onabla\sigma,\onabla\eta\Delta u\rp + 4(\eta\sigma)\nabla^2(\Delta u)(\eta,\eta) - \frac{n+1}{2}(\eta\sigma)\Delta^2u \\
    & \quad - \frac{4}{n}H\lp\onabla\sigma,\onabla\Delta u\rp + 4\nabla^2\sigma(\eta,\nabla\Delta u) + 8\eta \delta\left(\nabla^2\sigma(\nabla u)\right) \\
    & \quad - \frac{n-5}{2}\eta\Delta\left(u\Delta\sigma\right) - \frac{n-1}{2}\eta\left((\Delta\sigma)\Delta u\right) + 4\eta\lp\nabla u,\nabla\Delta \sigma\rp .
  \end{split}
 \end{equation}
 Similarly, we compute that
 \begin{equation}
  \label{eqn:conformal_linearization_oDeltaetaDelta}
  \begin{split}
   \delta_{-\frac{n-5}{2}}(\oDelta\eta\Delta)(u) & = 4\lp\onabla\sigma,\onabla\oDelta\eta u\rp - 3\lp\onabla\sigma,\onabla\eta\Delta u\rp - \frac{n-1}{2}\oDelta\left((\eta\sigma)\Delta u\right) \\
    & \quad + 4\oDelta\left((\eta\sigma)\nabla^2u(\eta,\eta) + \nabla^2\sigma(\eta,\nabla u) - \frac{H}{n}\lp\onabla u,\onabla\sigma\rp \right) \\
    & \quad + 8\odelta\left(\onabla^2\sigma(\onabla\eta u)\right) - \frac{n+1}{2}(\oDelta\sigma)\eta\Delta u \\
    & \quad - \frac{n-5}{2}\oDelta\eta(u\Delta\sigma) - 4\lp\onabla\eta u,\onabla\oDelta\sigma\rp
  \end{split}
 \end{equation}
 and that
 \begin{equation}
  \label{eqn:conformal_linearization_biolaplaceeta}
  \begin{split}
   \delta_{-\frac{n-5}{2}}(\oDelta^2\eta u)(u) & = -2\lp\onabla\sigma,\onabla\oDelta\eta u\rp - \frac{n-5}{2}\oDelta^2(u\eta\sigma) + 2\odelta\left(\onabla^2\sigma(\onabla\eta u)\right) \\
   & \quad - (n-1)(\oDelta\sigma)\oDelta\eta u - (n-2)\lp\onabla\eta u,\onabla\oDelta\sigma\rp - \frac{n-3}{2}(\oDelta^2\sigma)\eta u .
  \end{split}
 \end{equation}
 Combining~\eqref{eqn:conformal_linearization_etabilaplace}, \eqref{eqn:conformal_linearization_oDeltaetaDelta}, and~\eqref{eqn:conformal_linearization_biolaplaceeta} yields
 \begin{align*}
  \MoveEqLeft[1] \delta_{-\frac{n-5}{2}}\left(\eta\Delta^2 + \frac{4}{3}\oDelta\eta\Delta + \frac{8}{3}\oDelta^2\eta\right)(u) \\
  & = 4(\eta\sigma)\nabla^2(\Delta u)(\eta,\eta) - \frac{n+1}{2}(\eta\sigma)\Delta^2u - \frac{2(n-1)}{3}\oDelta\left((\eta\sigma)\Delta u\right) \\
   & \quad + \frac{16}{3}\oDelta\left((\eta\sigma)\nabla^2u(\eta,\eta) + \nabla^2\sigma(\eta,\nabla u) - \frac{H}{n}\lp\onabla u,\onabla\sigma\rp \right) - \frac{4(n-5)}{3}\oDelta^2(u\eta\sigma) \\
   & \quad + 16\odelta\left(\onabla^2\sigma(\onabla\eta u)\right) - \frac{4}{n}H\lp\onabla\sigma,\onabla\Delta u\rp + 4\nabla^2\sigma(\eta,\nabla\Delta u) + 8\eta \delta\left(\nabla^2\sigma(\nabla u)\right) \\
   & \quad - \frac{n-5}{2}\eta\Delta\left(u\Delta\sigma\right) - \frac{n-1}{2}\eta\left((\Delta\sigma)\Delta u\right) - \frac{8(n-1)}{3}(\oDelta\sigma)\oDelta\eta u - \frac{2(n+1)}{3}(\oDelta\sigma)\eta\Delta u \\
   & \quad - \frac{2(n-5)}{3}\oDelta\eta(u\Delta\sigma) + 4\eta\lp\nabla u,\nabla\Delta \sigma\rp - \frac{8n}{3}\lp\onabla\eta u,\onabla\oDelta\sigma\rp - \frac{4(n-3)}{3}(\oDelta^2\sigma)\eta u .
 \end{align*}
 The key point here is that there is no fourth-order term in $u$ which involves $\onabla\sigma$.  Combining the previous display with Lemma~\ref{lem:gauss-codazzi}, \eqref{eqn:conformal_linearization_bilaplace}, \eqref{eqn:conformal_linearization_olaplacelaplace}, \eqref{eqn:conformal_linearization_biolaplace}, and the conformal variations from~\cite[Section~3]{Case2015b}, we see that the operator
 \[ \mathring{B}_5^5 := B_5^5 - \frac{n-5}{2}T_5^5, \]
 which has the property that $\mathring{B}_5^5(1)=0$, has conformal variation
 \begin{equation}
  \label{eqn:conformal_linearization_ring55}
  (\delta_{-\frac{n-5}{2}}\mathring{B}_5^5)(u) = -\frac{n-5}{2}u\mathring{B}_5^5(\sigma) .
 \end{equation}

 Next, applications of Lemma~\ref{lem:basic_conformal_variations} and Lemma~\ref{lem:deltaw_leibniz} imply that
 \begin{align}
  \label{eqn:conformal_linearization_etaDeltaJ} D(\eta\Delta J) & =  -\eta\Delta^2\sigma - 2J\eta\Delta\sigma + (n-5)\eta\lp\nabla J,\nabla\sigma\rp - 2(\eta J)\Delta\sigma - 4(\Delta J)\eta\sigma , \\
  \label{eqn:conformal_linearization_oDeltaetaJ} D(\oDelta\eta J) & = -\oDelta\eta\Delta\sigma - 2\oDelta(J\eta\sigma) + (n-8)\lp\onabla\eta J,\onabla\sigma\rp - 3(\eta J)\oDelta\sigma , \\
  \label{eqn:conformal_linearization_biolaplaceH} D(\oDelta^2H) & = n\oDelta^2\eta\sigma - H\oDelta^2\sigma - 
  (n-2)\lp\onabla H,\onabla\oDelta\sigma\rp \\
   \notag & \quad + 2(n-4)\odelta\left(\onabla^2\sigma(\onabla H)\right) - 4(\oDelta H)\oDelta\sigma + 2(n-6)\lp\onabla\oDelta H,\onabla\sigma\rp .
 \end{align}
 Combining~\eqref{eqn:conformal_linearization_etaDeltaJ}, \eqref{eqn:conformal_linearization_oDeltaetaJ}, and~\eqref{eqn:conformal_linearization_biolaplaceH} with the conformal linearizations~\eqref{eqn:conformal_linearization_DeltaJ}, \eqref{eqn:conformal_linearization_oDeltaoPnn}, \eqref{eqn:conformal_linearizaiton_oDeltaoJ} and those from~\cite[Section~3]{Case2015b} yields
 \begin{equation}
  \label{eqn:conformal_linearization_T55}
  D(T_5^5) = \mathring{B}_5^5(\sigma).
 \end{equation}
 It follows from~\eqref{eqn:conformal_linearization_ring55}, \eqref{eqn:conformal_linearization_T55}, and the observation $\delta_wI=DI$ for any scalar invariant $I$ that
 \[ \delta_{-\frac{n-5}{2}}B_5^5 = 0 . \]
 In particular, $B_5^5$ is conformally covariant of bidegree $\bigl(-\frac{n-5}{2},-\frac{n+5}{2}\bigr)$.
 
 Finally, if $n=5$, then $B_5^5=\mathring{B}_5^5$.  It then follows from~\eqref{eqn:conformal_linearization_ring55} and~\eqref{eqn:conformal_linearization_T55} that~\eqref{eqn:critical-5} holds. 
\end{proof}

It now remains to prove that the operators $B_j^5$, $0\leq j\leq 5$, are such that the bilinear form
\begin{equation}
 \label{eqn:L6-bilinear-form}
 \mQ_6(u,v) := \int_X u\,L_6v\,\dvol_g + \sum_{j=0}^2 \oint_M B_j^5(u)B_{5-j}^5(v)\,\dvol_{\og}
\end{equation}
is symmetric.  This follows from repeated applications of the Divergence Theorem.  The computation is significantly simplified by exploiting the fact that $\mQ_6$ is conformally covariant (see the proof of Proposition~\ref{prop:symmetric} below) and choosing a favorable metric in $X$ with which to compute (cf.\ \cite[Proposition~7.15]{GoverPeterson2018}).

\begin{lem}
 \label{lem:l6-normal-form}
 Let $(X^{n+1},g_0)$ be a compact Riemannian manifold with {\coronal} boundary $M$.  Then there is a metric $g=e^{2u}g_0$ such that
 \begin{equation}
  \label{eqn:l6-normal-form}
  \begin{cases}
   g\rv_{TM} = g_0\rv_{TM}, \\
   H = 0, \\
   P(\eta,\eta) = \frac{1}{3}\oJ, \\
   \eta J = 0, \\
   \Delta J = \frac{8}{3}\oDelta\oJ + 4\lv\oP\rv^2 - \frac{4n}{9}\oJ^2 , \\
   \eta\Delta J = -4\eta\lv P\rv^2 .
  \end{cases}
 \end{equation}
\end{lem}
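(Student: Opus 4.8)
The plan is to realize the six conditions in~\eqref{eqn:l6-normal-form} by prescribing, order by order, the normal $5$-jet of $u$ along $M$. Each condition is an identity on $M$ between a natural scalar invariant of $g=e^{2u}g_0$ restricted to $M$ --- namely $g\rv_{TM}$, $H$, $P(\eta,\eta)$, $\eta J$, $\Delta J$, $\eta\Delta J$, of respective differential orders $0,1,2,3,4,5$ in the metric --- and a prescribed smooth function on $M$, so each depends only on the corresponding jet of $u$ along $M$. Fix a collar $M\times[0,\varepsilon)$ in which $g_0=dr^2+h_r$ for $r$ the $g_0$-distance to $M$; then the $g_0$-unit normal is $-\partial_r$ and $\nabla^{g_0}_{\partial_r}\partial_r\equiv0$. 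It thus suffices to produce smooth functions $u^{(0)},\dots,u^{(5)}$ on $M$ such that $e^{2u}g_0$ satisfies~\eqref{eqn:l6-normal-form} for every $u\in C^\infty(X)$ with $(\partial_r^ku)\rv_M=u^{(k)}$; any such $u$ then works, e.g.\ $u=\chi(r)\sum_{k=0}^5\frac{1}{k!}u^{(k)}r^k$ for a cutoff $\chi$ supported in the collar, extended by zero to the rest of $X$.

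I would determine $u^{(0)},\dots,u^{(5)}$ recursively, reading off one normal derivative from each successive condition. The condition $g\rv_{TM}=g_0\rv_{TM}$ forces $u^{(0)}=0$; hence the induced metric, and with it the intrinsic quantities $\oJ$, $\lv\oP\rv^2$, $\oDelta$, $\onabla$, are unchanged (so the right-hand side of the fifth condition and all but the last term of the sixth are already fixed), and $\onabla u\equiv0$ along $M$. For $1\le k\le5$, expand the $k$-th remaining condition using the finite conformal transformation laws $e^\sigma\hH=H+n\,\eta\sigma$ and $\hP=P-\nabla^2\sigma+d\sigma\otimes d\sigma-\tfrac12\lv\nabla\sigma\rv^2g$ from the proof of Lemma~\ref{lem:basic_conformal_variations}, together with the induced transformations of $J=\tr_gP$, of the Laplacian, and of the unit normal $\heta=e^{-\sigma}\eta$, all with $\sigma=u$; then rewrite in terms of operators on $M$ via the trace identities of Lemma~\ref{lem:gauss-codazzi}, simplifying with the {\coronal} identities of Lemmas~\ref{lem:coronal} and~\ref{lem:coronal-corollary}. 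Each condition acquires the triangular form $c_k(n)\,u^{(k)}+\Phi_k(u^{(0)},\dots,u^{(k-1)};g_0)=(\text{prescribed right-hand side})$, with $c_k(n)$ a nonzero constant and $\Phi_k$ depending only on the already-determined jet data and the fixed geometry of $g_0$, whence $u^{(k)}$ is obtained. The nonvanishing of $c_k(n)$ is seen by retaining only the top-order normal derivative of $u$ in each invariant: this is $n\,\eta\sigma$ in $e^\sigma\hH$, giving $c_1=\pm n$; it is $-\nabla^2\sigma(\eta,\eta)=-\partial_r^2\sigma$ in $\hP(\heta,\heta)$ (the conformal factor $e^{-2\sigma}\rv_M=1$ being harmless), giving $c_2=-1$; and iterating $-\Delta\sigma$ inside $\hJ$ yields $\pm\partial_r^3\sigma,\ \pm\partial_r^4\sigma,\ \pm\partial_r^5\sigma$ with unit coefficient in $\eta J^g$, $\Delta_gJ^g$, $\eta\Delta_gJ^g$, giving $c_3,c_4,c_5=\pm1$. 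That $\Phi_k$ is genuinely of lower differential order is immediate from the transformation laws, each of which is affine-linear in the top normal derivative of $u$ with the remaining $u$-dependence quadratic or higher in strictly lower-order derivatives; in particular, in the sixth condition the term $-4\eta\lv P^g\rv^2$ on the right depends only on the $3$-jet of $u$ (since $\hP$ involves at most $\nabla^2\sigma$) and so has already been fixed by the first four conditions.

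The substance of the argument is purely computational: confirming the triangular structure and the exact (nonzero) values of the $c_k(n)$, which requires expanding $\hP$, $\hJ$, $\Delta_g$, and $\heta$ to fifth order and invoking Lemma~\ref{lem:gauss-codazzi} repeatedly. The heaviest steps are the fifth and sixth conditions, where $\Delta_gJ^g$ and $\eta\Delta_gJ^g$ must be expanded along $M$; but for the present existence statement one needs only that the leading coefficient is nonzero and that the lower-order remainder depends on strictly lower-order jet data, so no closed formula for $\Phi_5$ or $\Phi_6$ is needed. The {\coronal} hypothesis enters only to streamline these expansions --- via the umbilicity~\eqref{eqn:compactification_umbilic} and the vanishing of $W(\eta,\cdot,\cdot,\cdot)$, $C(\eta,\cdot,\cdot)$, $B(\eta,\cdot)$ from Lemma~\ref{lem:coronal-corollary} --- and is in any case the standing assumption under which the operators $B_j^5$, hence the normalization~\eqref{eqn:l6-normal-form}, are used.
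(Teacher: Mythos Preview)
Your argument is correct and follows the same recursive strategy as the paper: determine the normal $5$-jet of $u$ along $M$ one order at a time, enforcing the six conditions in succession. The difference lies in how the triangular structure is verified. The paper observes that the six conditions in~\eqref{eqn:l6-normal-form} are precisely the vanishing of the scalar invariants $T_j^5$, $1\le j\le 5$, introduced in Propositions~\ref{prop:B0-3}, \ref{prop:B4}, and~\ref{prop:B5}; by the conformal covariance of the operators $B_j^5$ (for $n>5$, writing the conformal factor as $g=u^{4/(n-5)}g_0$) or by the critical identities~\eqref{eqn:critical-03}, \eqref{eqn:critical-4}, \eqref{eqn:critical-5} (for $n=5$), the condition $T_j^5=0$ for the new metric becomes a prescription of $B_j^5(u)$, whose leading transverse term is $\nabla_\eta^j$ --- so the recursion is automatic. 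Your route bypasses the $B_j^5$ entirely and unwinds the conformal transformation laws for $H$, $P$, $J$, $\Delta J$ directly; this is self-contained and makes the triangular structure explicit, at the cost of redoing by hand work that the paper has already packaged into the operators $B_j^5$. The paper's proof is shorter precisely because the separation of each condition into a top normal derivative plus lower-order remainder was already carried out in constructing those operators.
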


\begin{proof}
 From Proposition~\ref{prop:B0-3}, Proposition~\ref{prop:B4}, and Proposition~\ref{prop:B5}, we see that if $n>5$, then $g:=u^{\frac{4}{n-5}}g_0$ satisfies~\eqref{eqn:l6-normal-form} if and only if $B_0^5(u)=1$ and $B_j^5(u)=0$ for $1\leq j\leq 5$.  This is readily arranged by taking $u\rv_M=1$ and recursively using the condition $B_j^5(u)=0$ to determine $\nabla_\eta^ju$.
 
 Likewise, if $n=5$, we see that $g:=e^{2u}g_0$ satisfies~\eqref{eqn:l6-normal-form} if and only if $B_0^5(u)=0$ and $T_j^5+B_j^5(u)=0$, which again is readily arranged.
\end{proof}

We now check that~\eqref{eqn:L6-bilinear-form} is symmetric.

\begin{prop}
 \label{prop:symmetric}
 Let $(X^{n+1},g)$ be a compact Riemannian manifold with {\coronal} boundary $M$.  Then~\eqref{eqn:L6-bilinear-form} is a conformally covariant symmetric bilinear form; i.e.\ for all $u,v,\sigma\in C^\infty(X)$, it holds that $\mQ_6(u,v)=\mQ_6(v,u)$ and
 \[ \mQ_6^{\hg}(u,v) = \mQ_6^g\left(e^{\frac{n-5}{2}\sigma}u,e^{\frac{n-5}{2}\sigma}v\right), \]
 where $\mQ_6^{\hg}$ is defined in terms of $\hg:=e^{2\sigma}g$.
\end{prop}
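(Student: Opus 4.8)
The plan is to first prove the conformal covariance of $\mQ_6$, which is formal, and then to use it together with Lemma~\ref{lem:l6-normal-form} to reduce the symmetry statement to an integration-by-parts computation carried out in a convenient gauge.

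\emph{Conformal covariance.} Since $L_6$ is conformally covariant of bidegree $\bigl(-\frac{n-5}{2},-\frac{n+7}{2}\bigr)$ and, by Theorem~\ref{thm:boundary_operators}, $B_j^5$ is conformally covariant of bidegree $\bigl(-\frac{n-5}{2},-\frac{n+2j-5}{2}\bigr)$, I would check exponents using $\dvol_{\hg}=e^{(n+1)\sigma}\dvol_g$ on $X$ and the factor $e^{n\sigma}$ relating the boundary volume elements of $e^{2\sigma}\og$ and $\og$ on $M$. This gives
\[ u\,(L_6^{\hg}v)\,\dvol_{\hg} = \bigl(e^{\frac{n-5}{2}\sigma}u\bigr)\,L_6\bigl(e^{\frac{n-5}{2}\sigma}v\bigr)\,\dvol_g \]
pointwise in $X$ and, for each $j\in\{0,1,2\}$,
\[ e^{n\sigma}\,(\hB_j^5u)(\hB_{5-j}^5v) = B_j^5\bigl(e^{\frac{n-5}{2}\sigma}u\bigr)\,B_{5-j}^5\bigl(e^{\frac{n-5}{2}\sigma}v\bigr) \]
pointwise on $M$, the relevant cancellation in the latter being $-\tfrac{n+2j-5}{2}-\tfrac{n+5-2j}{2}+n=0$. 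Integrating and summing yields $\mQ_6^{\hg}(u,v)=\mQ_6^g\bigl(e^{\frac{n-5}{2}\sigma}u,e^{\frac{n-5}{2}\sigma}v\bigr)$; hence if $\mQ_6^g$ is symmetric for one representative $g$ of a conformal class, so is $\mQ_6^{\hg}$ for every representative.

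\emph{Reduction to the normal form.} By the above it suffices to verify symmetry for a single metric in each conformal class, so by Lemma~\ref{lem:l6-normal-form} I would replace $g$ by a conformal metric satisfying the normal form~\eqref{eqn:l6-normal-form}. Because $M$ is {\coronal}, Lemma~\ref{lem:coronal-corollary} gives $W(\eta,\cdot,\cdot,\cdot)\equiv C(\eta,\cdot,\cdot)\equiv B(\eta,\cdot)\equiv0$ along $M$; combined with $H=0$ and $A_0\equiv0$ this annihilates the second fundamental form and the normal components of the Weyl, Cotton and Bach tensors throughout $L_6$, $T_4$, and the formulas of Proposition~\ref{prop:B0-3}, Proposition~\ref{prop:B4} and Proposition~\ref{prop:B5}, while \eqref{eqn:l6-normal-form} pins down the remaining scalar boundary invariants $P(\eta,\eta)$, $\eta J$, $\Delta J$, $\eta\Delta J$. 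What is left to track is the intrinsic geometry of $\og$ (through $\oJ$, $\oP$, $\lv\oP\rv^2$, $\oDelta\oJ$) and the iterated normal derivatives of $u$ and $v$.

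\emph{Green's formula and matching.} It then remains to show
\[ \int_X\bigl(u\,L_6v-v\,L_6u\bigr)\,\dvol_g=\sum_{j=0}^2\oint_M\bigl(B_j^5(v)B_{5-j}^5(u)-B_j^5(u)B_{5-j}^5(v)\bigr)\,\dvol_{\og}. \]
I would expand $L_6$ via~\eqref{eqn:L6} and apply the Divergence Theorem repeatedly. Each summand of~\eqref{eqn:L6} is formally self-adjoint in the interior --- being of the form $\delta(\cdot)d$, a composition of Laplacians with such operators, $\Delta(J\Delta)$, or a multiplication operator --- so the interior contributions are symmetric in $u,v$ and cancel in the antisymmetrization, leaving a pure boundary integral bilinear in the $5$-jets of $u$ and $v$; moreover the boundary terms split by the pair of normal orders $(i,5-i)$ carried by $u$ and $v$, exactly matching the bidegree structure of the cross terms $B_i^5(u)B_{5-i}^5(v)$. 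Using Lemma~\ref{lem:gauss-codazzi} to convert high-order normal Hessians into iterated normal derivatives and intrinsic operators on $M$, and simplifying with~\eqref{eqn:l6-normal-form} and the {\coronal} vanishing, I would then compare the coefficient of each type $(i,5-i)$ against the antisymmetrization of $\sum_{j=0}^2 B_j^5(u)B_{5-j}^5(v)$ read off from the explicit formulas in the three propositions.

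The main obstacle is purely the length of this last step: $L_6$ is sixth order, so Green's formula produces boundary terms of normal order up to five in each argument with many accompanying intrinsic-curvature terms, and these must be matched against the (lengthy) formulas for $B_4^5$ and $B_5^5$. What makes the calculation feasible --- and what the choice~\eqref{eqn:l6-normal-form} is engineered for (cf.\ the remark preceding Lemma~\ref{lem:l6-normal-form}) --- is that it removes all dependence on $H$, $A_0$, and the normal parts of $W$, $C$, $B$; with those gone the boundary operators of Theorem~\ref{thm:boundary_operators} are precisely those for which the residual identity holds.
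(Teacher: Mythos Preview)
Your proposal is correct and follows the same overall strategy as the paper: deduce conformal covariance formally from the bidegrees, use Lemma~\ref{lem:l6-normal-form} to pass to the normalized metric, and then verify symmetry by integration by parts in that gauge. The one organizational difference worth noting is that the paper does not antisymmetrize and match; instead it integrates by parts once in $\int_X u\,L_6v$ together with the boundary terms to rewrite $\mQ_6(u,v)$ directly as $\mF_I(u,v)+\mF_B(u,v)$, where each piece is \emph{manifestly} symmetric (the interior piece is of the form $\int\langle\nabla\Delta u,\nabla\Delta v\rangle-T_2(\nabla u,\nabla\Delta v)-T_2(\nabla v,\nabla\Delta u)+\dotsb$, and the boundary piece is a symmetric pairing in $(\eta u,\eta v,\Delta u,\Delta v,\oDelta u,\oDelta v,\dotsc)$). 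This buys a slightly cleaner bookkeeping---one never has to track which terms are odd under $u\leftrightarrow v$---and it yields an explicit energy formula for $\mQ_6$ as a byproduct; your antisymmetrization route is equivalent in content and requires the same computations of $T_2(\eta,\cdot)$, $T_4(\eta,\cdot)$ and the simplified $B_j^5$ in the gauge~\eqref{eqn:l6-normal-form}.
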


\begin{proof}
 The fact that $\mQ_6$ is conformally covariant follows from Proposition~\ref{prop:B0-3}, Proposition~\ref{prop:B4} and Proposition~\ref{prop:B5}.
 
 Since $\mQ_6$ is conformally covariant, $\mQ_6^g$ is symmetric if and only if $\mQ_6^{\hg}$ is symmetric for some $\hg\in[g]$.  We may thus assume without loss of generality that $g$ satisfies the conclusion of Lemma~\ref{lem:l6-normal-form}.
 
 A straightforward computation using Lemma~\ref{lem:gauss-codazzi} implies that if $g$ satisfies~\eqref{eqn:l6-normal-form}, then
 \begin{align*}
  B_0^5(u) & = u, \\
  B_1^5(u) & = \eta u, \\
  B_2^5(u) & = \Delta u - \frac{4}{3}\oDelta u, \\
  B_3^5(u) & = \eta\Delta u - 4\oDelta\eta u + \frac{4(n-1)}{3}\oJ\eta u, \\
  B_4^5(u) & = -\Delta^2u - 4\oDelta(\Delta u) + 8\oDelta^2u + 8\odelta\left(\oP(\onabla u)\right) \\
   & \quad + \frac{2(5n-13)}{3}\oJ\Delta u - \frac{8(2n-5)}{3}\oJ\oDelta u - \frac{4(n-1)}{3}\lp\onabla\oJ,\onabla u\rp , \\
  B_5^5(u) & = \eta\Delta^2u + \frac{4}{3}\oDelta\eta\Delta u + \frac{8}{3}\oDelta^2\eta u - \frac{2(3n-7)}{3}\oJ\eta\Delta u - \frac{16(2n-5)}{9}\oJ\oDelta\eta u \\
   & \quad + 16\odelta\left(\oP(\onabla\eta u)\right) + 8\eta\lp P,\nabla^2u\rp - (n-9)\nabla^2J(\eta,\eta)\eta u - \frac{4(13n-55)}{9}\lp\onabla\oJ,\onabla\eta u\rp \\
   & \quad - \frac{8(4n-19)}{9}(\oDelta\oJ)\eta u - 8(n-4)\lv\oP\rv^2\eta u + \frac{8(2n^2-10n+5)}{9}\oJ^2\eta u .
 \end{align*}
 Evaluating on $M$ using Lemma~\ref{lem:gauss-codazzi} and Lemma~\ref{lem:coronal-corollary}, we also see that
 \begin{align*}
  T_4(\eta, Y) & = 0, \\
  T_4(\eta, \eta) & = -\frac{8(n-5)}{3}\oDelta\oJ - 8(n-4)\lv\oP\rv^2 + \frac{8(2n^2-10n+5)}{9}\oJ^2, \\
  T_2(\eta, \nabla u) & = \frac{4(n-3)}{3}\oJ\eta u, \\
  \delta\left(T_2(\nabla u)\right) & = \frac{4(n-3)}{3}\oJ\Delta u + \frac{8}{3}\oJ\oDelta u - 8\lp\oP,\onabla^2u\rp + \frac{4(n-9)}{3}\lp\onabla\oJ,\onabla u\rp , \\
  \eta\delta\left(T_2(\nabla u)\right) & = \frac{4(n-1)}{3}\oJ\eta\Delta u - 8\eta\lp P,\nabla^2u\rp \\
   & \quad + (n-9)\nabla^2J(\eta,\eta)\eta u + \frac{4(n-9)}{3}\lp\onabla\oJ,\onabla\eta u\rp
 \end{align*}
 for all $Y\in TM$, where $T_2:=(n-1)Jg-8P$ and $T_4$ is given by~\eqref{eqn:L6-T4}.  Combining these formulas with the Divergence Theorem yields
 \[ \mQ_6(u,v) = \mF_I(u,v) + \mF_B(u,v), \]
 where $\mF_I$ is the interior bilinear form
 \begin{align*}
  \mF_I(u,v) & = \int \biggl[ \lp\nabla\Delta u,\nabla\Delta v\rp - T_2(\nabla u,\nabla\Delta v) - T_2(\nabla v,\nabla\Delta u) \\
   & \qquad\qquad - \frac{n-1}{2}J(\Delta u)(\Delta v) + T_4(\nabla u,\nabla v) + \frac{n-5}{2}Q_6uv \biggr]
 \end{align*}
 and $\mF_B$ is the boundary bilinear form
 \begin{align*}
  \mF_B(u,v) & = \oint \biggl[ -4\left((\Delta u)\oDelta\eta v + (\Delta v)\oDelta\eta u\right) + 8\left((\eta u)\oDelta^2v + (\eta v)\oDelta^2u\right) \\
   & \quad - 16\left(\oP(\onabla u,\onabla\eta v) + \oP(\onabla v,\onabla\eta u)\right) + \frac{8(n-2)}{3}\oJ\left((\Delta u)\eta v + (\Delta v)\eta u\right) \\
   & \quad + \frac{8(n-2)}{3}\oJ\left(\lp\onabla u,\onabla\eta v\rp + \lp\onabla v,\onabla\eta u\rp - (\oDelta u)\eta v - (\oDelta v)\eta u\right) \biggr] .
 \end{align*}
 It is clear by inspection that $\mF_I$ and $\mF_B$ are both symmetric.  Therefore $\mQ_6$ is symmetric.
\end{proof}

We conclude this section by discussing the generalized Dirichlet-to-Neumann operators associated to $L_6$ and the boundary operators $B_j^5$, $0\leq j\leq 5$.  These are well-defined on compact Riemannian manifolds with {\coronal} boundary for which the Dirichlet kernel of $L_6$,
\[ \ker_D L_6 := \left\{ u\in C^\infty(X) \suchthat B_0^5(u)=0, B_1^5(u)=0, B_2^5(u) = 0 \right\} , \]
is trivial.  Indeed:

\begin{prop}
 \label{prop:dirichlet-to-neumann-operators}
 Let $(X^{n+1},g)$ be a compact Riemannian manifold with {\coronal} boundary and for which $\ker_DL_6=\{0\}$.  Then for each $f,\phi,\psi\in C^\infty(M)$ there is a unique function $u_{f,\phi,\psi}\in C^\infty(X)$ which solves~\eqref{eqn:L6_extension}.  In particular, the operators $\mB_j^5\colon C^\infty(M)\to C^\infty(M)$, $j\in\{1,3,5\}$, given by
 \begin{align*}
  \mB_1^5(\psi) & := B_3^5\left(u_{0,0,\psi}\right), \\
  \mB_3^5(\phi) & := B_4^5\left(u_{0,\phi,0}\right), \\
  \mB_5^5(f) & := B_5^5\left(u_{f,0,0}\right),
 \end{align*}
 are well-defined formally self-adjoint pseudodifferential operators with principal symbol proportional to that of $(-\Delta)^{j/2}$ which depend only on $[g]$ and $h:=g\rv_{TM}$.  Moreover, $\mB_j^5$ are conformally covariant: For every $f,\sigma\in C^\infty(M)$, it holds that
 \[ \hmB_j^5\left(f\right) = e^{-\frac{n+j}{2}\sigma}\mB_j^5\left(e^{\frac{n-j}{2}\sigma}f\right), \]
 where $\hmB_j^5$ is defined with respect to any extension in $[g]$ of $\hh:=e^{2\sigma}h$.
\end{prop}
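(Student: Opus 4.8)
The plan is to treat $(L_6;B_0^5,B_1^5,B_2^5)$ as an elliptic boundary value problem and to extract every assertion from standard elliptic boundary theory together with the conformal covariance of $L_6$ and of the $B_j^5$ and the symmetry of $\mQ_6$ from Proposition~\ref{prop:symmetric}. By~\eqref{eqn:L6}, $L_6$ is elliptic with principal symbol $\lv\xi\rv^6$, and $B_0^5,B_1^5,B_2^5$ have normal orders $0,1,2$ with nonvanishing coefficients on $1,\eta,\eta^2$, so $\{B_0^5,B_1^5,B_2^5\}$ is a Dirichlet system of order three and, the normal orders being distinct, the Lopatinski--Shapiro complementing condition holds automatically. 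Hence $\mathcal A\colon u\mapsto\bigl(L_6u,B_0^5(u),B_1^5(u),B_2^5(u)\bigr)$ is Fredholm on the usual scales of function spaces, with kernel $\ker_DL_6$, which is trivial by hypothesis. Rewriting $\mQ_6(u,v)=\mQ_6(v,u)$ gives the Green identity
\[ \int_X\bigl(uL_6v-vL_6u\bigr)\,\dvol_g = \oint_M\sum_{j=0}^2\bigl(B_j^5(v)B_{5-j}^5(u)-B_j^5(u)B_{5-j}^5(v)\bigr)\,\dvol_{\og}; \]
since its right-hand side pairs $B_j^5$ with $B_{5-j}^5$ and $\{B_0^5,\dotsc,B_5^5\}$ forms a complete Dirichlet system of orders $0,\dotsc,5$, integrating by parts identifies the cokernel of $\mathcal A$ with $\ker_DL_6=\{0\}$, so $\mathcal A$ is an isomorphism. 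This produces, for each $(f,\phi,\psi)\in\bigl(C^\infty(M)\bigr)^3$, a unique solution $u_{f,\phi,\psi}$ of~\eqref{eqn:L6_extension}, smooth by elliptic regularity; in particular $\mB_j^5$, $j\in\{1,3,5\}$, are well-defined linear operators.

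For the transformation law and the dependence only on $[g]$ and $h$, fix $\hg=e^{2\sigma}g$. The conformal covariance of $L_6$ (of bidegree $\bigl(-\tfrac{n-5}{2},-\tfrac{n+7}{2}\bigr)$) and of the $B_j^5$ (Theorem~\ref{thm:boundary_operators}) show that if $\hat u$ solves the $\hg$-problem~\eqref{eqn:L6_extension} with data $(f,0,0)$, then $u:=e^{\frac{n-5}{2}\sigma}\hat u$ solves the $g$-problem with data $\bigl(e^{\frac{n-5}{2}\sigma}f,0,0\bigr)$; the uniqueness just established then forces
\[ \hmB_5^5(f) = e^{-\frac{n+5}{2}\sigma}\mB_5^5\bigl(e^{\frac{n-5}{2}\sigma}f\bigr) \]
on $M$, and the identical computation with the datum placed in the $B_1^5$- or $B_2^5$-slot yields the stated laws for $\mB_3^5$ and $\mB_1^5$. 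Choosing $\sigma$ with $\sigma\rv_M=0$ shows $\mB_j^5$ is unchanged, so it depends only on $[g]$ and on the chosen representative $h$ of $[g\rv_{TM}]$; since every such representative equals $g'\rv_{TM}$ for some $g'\in[g]$, this is a consistent prescription.

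Formal self-adjointness is then immediate: substituting $u=u_{f,0,0}$ and $v=u_{\tilde f,0,0}$ into $\mQ_6$ kills the interior term since $L_6u=L_6v=0$, and because $B_0^5(u)=f$, $B_1^5(u)=B_2^5(u)=0$ and $B_5^5(v)=\mB_5^5(\tilde f)$ the boundary sum collapses to $\oint_M f\,\mB_5^5(\tilde f)\,\dvol_{\og}$; thus $\mQ_6(u,v)=\mQ_6(v,u)$ gives $\oint_M f\,\mB_5^5(\tilde f)=\oint_M\tilde f\,\mB_5^5(f)$, and the substitutions $u=u_{0,\phi,0},v=u_{0,\tilde\phi,0}$ and $u=u_{0,0,\psi},v=u_{0,0,\tilde\psi}$ treat $\mB_3^5$ and $\mB_1^5$. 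For the pseudodifferential character, the Poisson-type solution operators $f\mapsto u_{f,0,0}$, $\phi\mapsto u_{0,\phi,0}$, $\psi\mapsto u_{0,0,\psi}$ are, modulo smoothing, governed near $M$ by the Calder\'on projector of $L_6$, so precomposing the differential operators $B_5^5,B_4^5,B_3^5$ with them yields pseudodifferential operators on $M$ of orders $5-0$, $4-1$, $3-2$; their principal symbols are read off from the model half-space equation $\bigl(-\partial_t^2+\lv\xi'\rv^2\bigr)^3v=0$ on $t>0$, whose bounded solutions are spanned by $e^{-\lv\xi'\rv t}$, $te^{-\lv\xi'\rv t}$, $t^2e^{-\lv\xi'\rv t}$: solving the model Dirichlet conditions coming from $B_0^5,B_1^5,B_2^5$ and applying the leading part of $B_{5-j}^5$ gives a positive multiple of $\lv\xi'\rv^j$, proportional to the symbol of $(-\Delta)^{j/2}$.

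The conceptual content is carried entirely by Proposition~\ref{prop:symmetric} — through the Green identity, which makes the boundary value problem formally self-adjoint and hence of index zero, and through the collapse of $\mQ_6$ on solutions — together with the conformal covariance of $L_6$ and the $B_j^5$. The only genuinely laborious points are the verification that all conformal transformations are as claimed and the principal-symbol bookkeeping in the last step, both of which are routine.
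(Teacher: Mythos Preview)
Your proof is correct and follows essentially the same approach as the paper: Lopatinski--Shapiro plus $\ker_DL_6=\{0\}$ for unique solvability, the symmetry of $\mQ_6$ collapsed on solutions for formal self-adjointness, and conformal covariance of $L_6$ and the $B_j^5$ together with uniqueness for the transformation law and the dependence only on $[g]$ and $h$. The only difference is that you supply a Green-identity/index-zero argument for surjectivity and a direct half-space symbol computation, whereas the paper handles these two points by invoking formal self-adjointness of the boundary problem and citing \cite[Theorem~8.4]{BransonGover2001} (with the proportionality constant then read off from Theorem~\ref{thm:dirichlet-to-neumann}).
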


\begin{proof}
 Since the leading-order term of $B_j^5$, counted according to the number of transverse derivatives, is $\nabla_\eta^j$, it follows that $\bigl(L_6;B_0^5,B_1^5,B_2^5\bigr)$ satisfy the Lopatinksii--Shapiro conditions~\cite{AgmonDouglisNirenberg1959,Hormander2007,Lopatinskii1953,Shapiro1953} (cf.\ \cite{BransonGover2001,GoverPeterson2018}).  It then follows from the formal self-adjointness of $\bigl(L_6;B_0^5,B_1^5,B_2^5\bigr)$ and the assumption $\ker_DL_6=\{0\}$ that there is a unique smooth solution of~\eqref{eqn:L6_extension}.
 
 Since the sixth-order GJMS operator $L_6$ and the boundary operators $B_j^5$ are $O(TM)$-invariant and elliptic, the argument in~\cite[Theorem~8.4]{BransonGover2001} shows that $\mB_j^5$ are pseudodifferential operators with principal symbol proportional to that of $(-\Delta)^{j/2}$.  Since this construction is universal, the constant of proportionality is given by the conclusion of Theorem~\ref{thm:dirichlet-to-neumann}.
 
 Next, the definition of $\mB_5^5$ implies that
 \[ \oint_M f_1\,\mB_5^5 f_2\,\dvol_{\og} = \mQ_6\left(u_{f_1,0,0},u_{f_2,0,0}\right) . \]
 Proposition~\ref{prop:symmetric} implies that $\mB_5^5$ is formally self-adjoint.  The formal self-adjointness of $\mB_1^5$ and $\mB_3^5$ follows similarly.
 
 Finally, let $f,\phi,\psi\in C^\infty(M)$ and let $u:=u_{f,\phi,\psi}^g$ denote the extension~\eqref{eqn:L6_extension} of $(f,\phi,\psi)$ with respect to the background metric $g$.  Given $\Upsilon\in C^\infty(X)$, set $\sigma=\Upsilon\rv_M$.  Let $\hg:=e^{2\Upsilon}g$, $\hf:=e^{-\frac{n-5}{2}\sigma}f$, $\hphi:=e^{-\frac{n-3}{2}\sigma}\phi$, and $\hpsi:=e^{-\frac{n-1}{2}\sigma}\psi$.  It follows from Theorem~\ref{thm:boundary_operators} that $e^{-\frac{n-5}{2}\Upsilon}u=u_{\hf,\hphi,\hpsi}^{\hg}$ is the extension~\eqref{eqn:L6_extension} of $(\hf,\hphi,\hpsi)$ with respect to $\hg$.  Applying Theorem~\ref{thm:boundary_operators} again yields
 \begin{align*}
  \hB_3^5\left(u_{\hf,\hphi,\hpsi}^{\hg}\right) & = e^{-\frac{n+1}{2}\sigma}B_3^5\left(u_{f,\phi,\psi}\right), \\
  \hB_4^5\left(u_{\hf,\hphi,\hpsi}^{\hg}\right) & = e^{-\frac{n+3}{2}\sigma}B_4^5\left(u_{f,\phi,\psi}\right), \\
  \hB_5^5\left(u_{\hf,\hphi,\hpsi}^{\hg}\right) & = e^{-\frac{n+5}{2}\sigma}B_5^5\left(u_{f,\phi,\psi}\right) .
 \end{align*}
 Therefore $\mB_j^5$, $j\in\{1,3,5\}$, is independent of the choice of extension in $[g]$ of $h$ and that $\mB_j^5$ is conformally covariant of bidegree $\bigl(-\frac{n-j}{2},-\frac{n+j}{2}\bigr)$.
\end{proof}
\section{Fractional GJMS operators from $L_6$}
\label{sec:poincare}

We now study the relationship between the fractional GJMS operators and the boundary operators associated to the sixth-order GJMS operator.  To that end, it is useful to first give a simple formula for our boundary operators when computed with respect to a geodesic compactification of a Poincar\'e--Einstein manifold.

\begin{prop}
 \label{prop:geodesic_operators}
 Let $(X^{n+1},M^n,g_+)$, $n\geq5$, be a Poincar\'e--Einstein manifold, and let $r$ be a geodesic defining function for $M$.  With respect to the compactified metric $g:=r^2g_+$, the boundary operators $B_j^5$, $0\leq j\leq 5$, associated to $L_6$ are given by
 \begin{align*}
  B_0^5(u) & = u, \\
  B_1^5(u) & = -\partial_ru, \\
  B_2^5(u) & = \partial_r^2u + \frac{1}{3}\left(-\oDelta + \frac{n-5}{2}\oJ\right)u, \\
  B_3^5(u) & = -\partial_r^3u - 3\left(-\oDelta + \frac{n-3}{2}\oJ\right)\partial_r u, \\
  B_4^5(u) & = -\partial_r^4u + 6\left( -\oDelta + \frac{n-1}{2}\oJ\right)\partial_r^2u \\
   & \quad + 3\left(-\oDelta + \frac{n-1}{2}\oJ\right)\left(-\oDelta + \frac{n-5}{2}\oJ\right)u \\
   & \quad + 3\left( \odelta\left(2\oP-\oJ\og\right)\od + \oJ\oDelta - \frac{n-5}{2}\lv\oP\rv^2 \right)u , \\
  B_5^5(u) & = -\partial_r^5u + \frac{10}{3}\left(-\oDelta + \frac{n+1}{2}\oJ\right)\partial_r^3u \\
   & \quad - 5\left(-\oDelta + \frac{n+1}{2}\oJ\right)\left(-\oDelta + \frac{n-3}{2}\oJ\right)\partial_ru \\
   & \quad - 15\left( \odelta\left(2\oP-\oJ\og\right)\od + \oJ\oDelta - \frac{n-3}{2}\lv\oP\rv^2 \right)\partial_ru .
 \end{align*}
\end{prop}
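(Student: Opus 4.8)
The plan is to specialize the explicit general formulas for the $B_j^5$ from Proposition~\ref{prop:B0-3}, Proposition~\ref{prop:B4}, and Proposition~\ref{prop:B5} --- all of which apply here, since a geodesic compactification of a Poincar\'e--Einstein manifold has {\coronal} boundary by Lemma~\ref{lem:coronal} --- to the geometry of the collar neighborhood $U\cong[0,\varepsilon)\times M$ on which $g=dr^2+h_r$. On $U$ we have $\eta=-\partial_r$, $h_0=\og$, and, since $\Ric_{g_+}=-ng_+$, the family $h_r=\og-r^2\oP+O(r^4)$ is even in $r$ to order $n-1$; in particular its $r$- and $r^3$-coefficients vanish. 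This yields the operator identity
\[ \Delta_g = \partial_r^2 + \tfrac12\bigl(\partial_r\log\det h_r\bigr)\partial_r + \Delta_{h_r} \]
on $U$, with $\tfrac12\partial_r\log\det h_r = -r\oJ+O(r^3)$ and $\Delta_{h_r}=\oDelta+O(r^2)$ even in $r$, so that the odd-order normal derivatives of these coefficients all vanish along $M$. From Lemma~\ref{lem:geodesic_compactification_invariants} --- together with the identity $P=-r^{-1}\nabla^2r$, which gives the stronger statement $P(\partial_r,\cdot)\equiv0$ on $U$, and with the evident analogues $\eta\lv P\rv^2=0$ and $\nabla^2J(\eta,\eta)\rv_M=\lv\oP\rv^2$ that follow in the same way --- we record the simplifications we need: $H=0$ and $A_0=0$ on $M$, $P(\eta,\cdot)\equiv0$ near $M$, $J\rv_M=\oJ$, $\eta J=0$, $\Delta J\rv_M=\oDelta\oJ+\lv\oP\rv^2$, $\eta\Delta J=0$, and $\nabla_\eta P(\eta,\eta)=0$.

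For $B_0^5$, $B_1^5$, $B_2^5$, and $B_3^5$ the result is immediate: in Proposition~\ref{prop:B0-3} every term carrying a factor of $H$, $A_0$, $P(\eta,\cdot)$, or $\eta J$ drops, $J$ becomes $\oJ$, and the decompositions $\Delta u=\nabla^2u(\eta,\eta)+\oDelta u+H\eta u$ and $\eta\Delta u=\nabla^3u(\eta,\eta,\eta)+\dotsb$ of Lemma~\ref{lem:gauss-codazzi} convert transverse derivatives into $\partial_r$-derivatives. One obtains $B_1^5(u)=-\partial_ru$, $B_2^5(u)=\partial_r^2u+\tfrac13\bigl(-\oDelta+\tfrac{n-5}2\oJ\bigr)u$, and --- combining $\eta\Delta u=-\partial_r^3u-\oDelta\partial_ru+\oJ\partial_ru$ with $S_2^5=\tfrac{3n-7}2\oJ$ and $T_3^5=0$ --- the stated formula for $B_3^5$.

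For $B_4^5$ and $B_5^5$ one proceeds in the same way, now using the operator identity for $\Delta_g$ on all of $U$ (not merely its restriction to $M$) to expand $\Delta^2u$, $\oDelta\Delta u$, $\nabla^2(\Delta u)(\eta,\eta)$, $\oDelta\nabla^2u(\eta,\eta)$, $\eta\Delta u$, and $\eta\Delta^2u$ in terms of the $\partial_r^ku$ and of the intrinsic operators $\oDelta$ and $\odelta\oP\od$, and likewise expanding $\eta\lp P,\nabla^2u\rp$ via $P(\eta,\cdot)\equiv0$ and $\partial_rh_r\rv_{r=0}=0$. One checks that $S_3^5=0$ and, after inserting $\Delta J\rv_M=\oDelta\oJ+\lv\oP\rv^2$, that $T_4^5=-3\oDelta\oJ-3\lv\oP\rv^2+\tfrac{3(n-1)}2\oJ^2$; assembling and regrouping the curvature terms yields the factored expression
\[ B_4^5(u) = -\partial_r^4u + 6\,L_2\bigl(\tfrac{n-1}2\bigr)\partial_r^2u + 3\,L_2\bigl(\tfrac{n-1}2\bigr)L_2\bigl(\tfrac{n-5}2\bigr)u + 3\,L_4\bigl(\tfrac{n-5}2\bigr)u, \]
where $L_2(s)=-\oDelta+s\oJ$ and $L_4(s)=\odelta(2\oP-\oJ\og)\od+\oJ\oDelta-s\lv\oP\rv^2$ as in Section~\ref{sec:bg}. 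The computation for $B_5^5$ is identical in spirit but much longer, given the number of terms in Proposition~\ref{prop:B5}; the same three ingredients --- $P(\eta,\cdot)\equiv0$, $H=0$, and the $\Delta J$ identity --- drive all the cancellations, and the final regrouping gives $B_5^5(u)=-\partial_r^5u+\tfrac{10}3L_2\bigl(\tfrac{n+1}2\bigr)\partial_r^3u-5L_2\bigl(\tfrac{n+1}2\bigr)L_2\bigl(\tfrac{n-3}2\bigr)\partial_ru-15L_4\bigl(\tfrac{n-3}2\bigr)\partial_ru$.

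The main obstacle is not conceptual but one of bookkeeping: carrying out the substitution and simplification for $B_5^5$ without error, where one must track on the order of a hundred terms and verify that they collapse onto the three displayed operators. No geometric input beyond Lemma~\ref{lem:geodesic_compactification_invariants} and the evenness of $h_r$ is required --- in fact essentially only $\og$ and $\oP$ enter, since the odd-order and the $h_{(4)}$ contributions to $\Delta_{h_r}$ never survive at $r=0$ in the orders needed --- but one must be careful to treat $\Delta_g=\partial_r^2+\tfrac12(\partial_r\log\det h_r)\partial_r+\Delta_{h_r}$ as an identity of operators on the collar, so that iterated Laplacians and subsequent normal differentiation are handled consistently.
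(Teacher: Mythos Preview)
Your proposal is correct and follows essentially the same approach as the paper: specialize the general formulas of Propositions~\ref{prop:B0-3}, \ref{prop:B4}, and~\ref{prop:B5} using Lemma~\ref{lem:geodesic_compactification_invariants} and the collar expansion of $\Delta_g$, then regroup the surviving terms. The paper carries out exactly this computation, displaying the intermediate expansions~\eqref{eqn:pe_laplace_asymptotics} and~\eqref{eqn:pe_bilaplace_asymptotics} of $\Delta$ and $\Delta^2$ and the separate evaluation~\eqref{eqn:geodesic_compactification_etaPHess} of $\eta\lp P,\nabla^2u\rp$; your framing in terms of $L_2(s)$ and $L_4(s)$ is a helpful anticipation of how the result is used in the subsequent proof of Theorem~\ref{thm:dirichlet-to-neumann_restatement}.
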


\begin{proof}
 First note that $\eta=-\partial_r$ and $\nabla_\eta\eta=0$ in a neighborhood of $M$.  Therefore $\nabla^ku(\eta,\dotsc,\eta)=(-1)^k\partial_r^ku$ for all $k$.  Second, since $g=dr^2+h_r$ with $h_r$ as in~\eqref{eqn:hr_expansion}, it holds that
 \begin{equation}
  \label{eqn:pe_laplace_asymptotics}
  \Delta = \partial_r^2 + \oDelta - r\oJ\partial_r + \frac{r^2}{2}\left(\odelta\left(2\oP-\oJ\og\right)\od + \oJ\oDelta\right) - \frac{r^3}{2}\lv\oP\rv^2\partial_ru + O(r^4) .
 \end{equation}
 This implies that
 \begin{equation}
  \label{eqn:pe_bilaplace_asymptotics}
  \begin{split}
  \Delta^2 & = \partial_r^4u - 2(-\oDelta + \oJ)\partial_r^2u + \oDelta^2u + \odelta\left((2\oP-\oJ\og)(\onabla u)\right) + \oJ\oDelta u \\
   & \quad - r\bigl[ 2\oJ\partial_r^3u - 2\odelta\left((2\oP-\oJ\og)(\onabla\partial_ru)\right) + 2\lp\onabla\oJ,\onabla\partial_ru\rp \\
    & \qquad + \left(\oDelta\oJ + 3\lv\oP\rv^2 - \oJ^2\right)\partial_r u \bigr] + O(r^2) .
  \end{split}
 \end{equation}
 
 It follows from Lemma~\ref{lem:geodesic_compactification_invariants} and Proposition~\ref{prop:B0-3} that
 \begin{align*}
  B_0^5(u) & = u, \\
  B_1^5(u) & = \eta u, \\
  B_2^5(u) & = \Delta u - \frac{4}{3}\oDelta u + \frac{n-5}{6}\oJ u , \\
  B_3^5(u) & = \eta\Delta u - 4\oDelta\eta u + \frac{3n-7}{2}\oJ\eta u .
 \end{align*}
 The first two identities yield the claimed formulas for $B_0^5$ and $B_1^5$, respectively.  The final two identities and~\eqref{eqn:pe_laplace_asymptotics} together yield the claimed formulas for $B_2^5$ and $B_3^5$.
 
 Next, Lemma~\ref{lem:geodesic_compactification_invariants} and Proposition~\ref{prop:B4} imply that
 \begin{align*}
  B_4^5(u) & = -\Delta^2u - 4\oDelta\Delta u + 8\oDelta^2u + (3n-5)\oJ\partial_r^2u - 3(n-3)\oJ\oDelta u \\
   & \quad + 8\odelta\left(\oP(\onabla u)\right) - (3n-11)\lp\onabla\oJ,\onabla u\rp \\
   & \quad - \frac{3(n-5)}{2}\left( \oDelta\oJ + \lv\oP\rv^2 -\frac{n-1}{2}\oJ^2 \right)u. 
 \end{align*}
 Combining this display with~\eqref{eqn:pe_laplace_asymptotics} and~\eqref{eqn:pe_bilaplace_asymptotics} yields the claimed formula for $B_4^5$.
 
 Finally, observe that
 \[ \nabla^2u(X,Y) = \onabla^2u(X,Y) + r\oP(X,Y)\eta u + o(r) \]
 for all sections $X,Y$ of $TM$.  Since $P(\eta,\cdot)\equiv0$ and $\nabla_\eta P=0$, we conclude that
 \begin{equation}
  \label{eqn:geodesic_compactification_etaPHess}
  \eta\lp P,\nabla^2u\rp = \odelta\left(\oP(\onabla\eta u)\right) - \lp\onabla J,\onabla\eta u\rp - \lv\oP\rv^2\eta u .
 \end{equation}
 It follows from Lemma~\ref{lem:geodesic_compactification_invariants}, Proposition~\ref{prop:B5}, and~\eqref{eqn:geodesic_compactification_etaPHess} that
 \begin{align*}
  B_5^5(u) & = \eta\Delta^2u + \frac{4}{3}\oDelta\eta\Delta u + \frac{8}{3}\oDelta^2\eta u - \frac{5n-7}{3}\oJ\eta\Delta u - \frac{2(5n-9)}{3}\oJ\oDelta\eta u \\
   & \quad + 24\odelta\left(\oP(\onabla\eta u)\right) - \frac{15n-23}{3}\lp\onabla\oJ,\onabla\eta u\rp \\
   & \quad - \frac{15n-59}{6}(\oDelta\oJ)\eta u + \frac{15n^2-50n-29}{12}\oJ^2\eta u - \frac{15n-51}{2}\lv\oP\rv^2\eta u .
 \end{align*}
 Combining this display with~\eqref{eqn:pe_laplace_asymptotics} and~\eqref{eqn:pe_bilaplace_asymptotics} yields the claimed formula for $B_5^5$.
\end{proof}

Theorem~\ref{thm:dirichlet-to-neumann} asserts the desired relationship between the fractional GJMS operators and the boundary operators associated to $L_6$.  We recall this assertion below for the reader's convenience.

\begin{thm}
 \label{thm:dirichlet-to-neumann_restatement}
 Let $(X^{n+1},g)$ be a compactification of a Poincar\'e--Einstein manifold $(X_0,g_+)$ such that $\frac{n^2}{4}-\gamma^2\not\in\sigma_{pp}(-\Delta_{g_+})$ for $\gamma\in\{1/2,3/2,5/2\}$.  Suppose additionally that $u\in C^\infty(X)$ is such that $L_6u=0$.  Then
 \begin{equation}
  \label{eqn:dirichlet-to-neumann_restatement}
  \begin{split}
   B_5^5(u) & = \frac{8}{3}P_5\left(B_0^5(u)\right), \\
   B_4^5(u) & = 8P_3\left(B_1^5(u)\right), \\
   B_3^5(u) & = 3P_1\left(B_2^5(u)\right),
  \end{split}
 \end{equation}
 where $B_j^5$, $0\leq j\leq 5$, are the boundary operators of Theorem~\ref{thm:boundary_operators} and $P_{2\gamma}$, $\gamma\in\{1/2,3/2,5/2\}$ are the fractional GJMS operators of order $2\gamma$.
\end{thm}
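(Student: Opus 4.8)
The plan is to pass to a geodesic compactification, where Proposition~\ref{prop:geodesic_operators} gives the boundary operators explicitly, and then to exploit the factorization of the sixth-order GJMS operator of the Einstein metric $g_+$ into three shifted Laplacians whose spectral parameters are exactly the $\frac{n^2}{4}-\gamma^2$ appearing in the hypothesis.

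\emph{Reduction and factorization.} Each of the three identities is conformally covariant: the operators $B_j^5$ are conformally covariant by Theorem~\ref{thm:boundary_operators}, each $P_{2\gamma}$ is conformally covariant, and $L_6u=0$ is a conformally covariant condition. Hence we may assume $g=r^2g_+$ for a geodesic defining function $r$, so that the formulas of Proposition~\ref{prop:geodesic_operators} apply. Since $L_6$ is conformally covariant of bidegree $\bigl(-\tfrac{n-5}{2},-\tfrac{n+7}{2}\bigr)$, the condition $L_6u=0$ is equivalent to $L_6^{g_+}w=0$ for $w:=r^{\frac{n-5}{2}}u$. Because $\Ric_{g_+}=-ng_+$, the GJMS operators factor at $g_+$~\cite{FeffermanGraham2012,Gover2006q}; explicitly,
\[ L_6^{g_+}=\bigl(-\Delta_{g_+}-s_{1/2}(n-s_{1/2})\bigr)\bigl(-\Delta_{g_+}-s_{3/2}(n-s_{3/2})\bigr)\bigl(-\Delta_{g_+}-s_{5/2}(n-s_{5/2})\bigr), \]
where $s_\gamma:=\tfrac n2+\gamma$ and $s_\gamma(n-s_\gamma)=\tfrac{n^2}{4}-\gamma^2$; the three factors commute, being polynomials in $\Delta_{g_+}$.

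\emph{Identification of $u$.} Under the spectral hypothesis the Graham--Zworski Poisson operators $\mP(s_\gamma)$ are well-defined~\cite{GrahamZworski2003}, and moreover $\ker_D L_6=\{0\}$: a function $v\in L^2(g_+)$ with $L_6^{g_+}v=0$ is spectrally supported on $\{s_\gamma(n-s_\gamma)\}$ and hence vanishes, while any $u\in\ker_D L_6$ satisfies $u=O(r^3)$ in geodesic gauge, so $r^{\frac{n-5}{2}}u\in L^2(g_+)$. Thus $u_{f,\phi,\psi}$ exists, is unique, and is linear in $(f,\phi,\psi)$, and it suffices to treat $u=u_{f,0,0}$, $u=u_{0,\phi,0}$, $u=u_{0,0,\psi}$ separately. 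In the first case put $\tilde u:=r^{-\frac{n-5}{2}}\mP(s_{5/2})(f)$; then $L_6^g\tilde u=0$ since $\mP(s_{5/2})(f)$ lies in the kernel of the last factor above, and from the expansions~\eqref{eqn:poisson_asymptotics}, \eqref{eqn:poisson_taylor_asymptotics}, \eqref{eqn:expansion-operators}, together with $T_2(s_{5/2})=-\tfrac16 L_2\bigl(\tfrac{n-5}{2}\bigr)$, one reads off $B_0^5\tilde u=f$, $B_1^5\tilde u=0$, $B_2^5\tilde u=0$; by uniqueness $u_{f,0,0}=\tilde u$. The cases $u_{0,\phi,0}$ and $u_{0,0,\psi}$ are handled the same way with $\mP(s_{3/2})(-\phi)$ and $\mP(s_{1/2})\bigl(\tfrac12\psi\bigr)$ in place of $\mP(s_{5/2})(f)$: these produce $\tilde u$ vanishing to orders $r^1$ and $r^2$ respectively, so that $B_0^5,B_1^5,B_2^5$ again return the data $(0,\phi,0)$ and $(0,0,\psi)$.

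\emph{Evaluation of $B_3^5,B_4^5,B_5^5$.} It remains to substitute the Taylor coefficients of these three distinguished $\tilde u$'s---which by~\eqref{eqn:poisson_taylor_asymptotics}, \eqref{eqn:expansion-operators} are built from $T_2(s_\gamma)$, $T_4(s_\gamma)$, $T_2(n-s_{1/2})$ and the scattering operators $S(s_\gamma)$ applied to the relevant datum---into the geodesic formulas of Proposition~\ref{prop:geodesic_operators}. In each case the purely differential terms cancel identically: for $u_{f,0,0}$ one finds $B_3^5(u_{f,0,0})=0$ trivially and $B_4^5(u_{f,0,0})=-24\,T_4(s_{5/2})f+12\,L_2\bigl(\tfrac{n-1}{2}\bigr)T_2(s_{5/2})f+3\,L_2\bigl(\tfrac{n-1}{2}\bigr)L_2\bigl(\tfrac{n-5}{2}\bigr)f+3\,L_4\bigl(\tfrac{n-5}{2}\bigr)f$, which vanishes because $T_4(s_{5/2})=\tfrac1{24}L_2\bigl(\tfrac{n-1}{2}\bigr)L_2\bigl(\tfrac{n-5}{2}\bigr)+\tfrac18 L_4\bigl(\tfrac{n-5}{2}\bigr)$ by~\eqref{eqn:expansion-operators}; the analogous cancellations (again forced by the explicit forms of $T_2$ and $T_4$) give $B_3^5(u_{0,\phi,0})=0$, $B_5^5(u_{0,\phi,0})=0$, $B_4^5(u_{0,0,\psi})=0$, $B_5^5(u_{0,0,\psi})=0$. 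What survives is a single rational multiple of $S(s_\gamma)$: namely $B_5^5(u_{f,0,0})=-120\,S(s_{5/2})f$, $B_4^5(u_{0,\phi,0})=24\,S(s_{3/2})\phi$, $B_3^5(u_{0,0,\psi})=-3\,S(s_{1/2})\psi$. Since $P_{2\gamma}=d_\gamma S(s_\gamma)$ with $d_\gamma=2^{2\gamma}\Gamma(\gamma)/\Gamma(-\gamma)$, and $d_{1/2}=-1$, $d_{3/2}=3$, $d_{5/2}=-45$, this yields $B_5^5(u_{f,0,0})=\tfrac83P_5f$, $B_4^5(u_{0,\phi,0})=8P_3\phi$, $B_3^5(u_{0,0,\psi})=3P_1\psi$; combined with the vanishings above and linearity, this is exactly the claimed system.

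\emph{Main difficulty.} The substantive part is the third step: tracking the Taylor expansions of the three $\tilde u$'s through the coincident indicial orders $r^1,\dots,r^5$ and verifying that the precise rational coefficients appearing in Proposition~\ref{prop:geodesic_operators} produce the stated cancellations and the correct multiples of $P_{2\gamma}$. A separate point of care is the critical dimension $n=5$, where $\gamma=5/2=\tfrac n2$ lies at the endpoint of the Graham--Zworski range; there $\mP(s_{5/2})$ and $S(s_{5/2})$ (hence $P_5$) must be taken from the critical scattering expansion, which carries an additional logarithmic term, and the identity for $B_5^5$ is to be read accordingly.
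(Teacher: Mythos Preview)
Your proof is correct and follows essentially the same route as the paper: reduce to a geodesic compactification, use the factorization of $L_6^{g_+}$ at the Einstein metric, build the solution out of Graham--Zworski Poisson extensions, invoke uniqueness from $\ker_D L_6=\{0\}$, and read off the higher boundary operators from the Taylor expansion using the identities~\eqref{eqn:expansion-operators}.

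The only organizational difference is that the paper constructs a single function $v=r^{-\frac{n-5}{2}}\bigl(v_f-v_\phi+\tfrac12 v_\psi\bigr)$ rather than treating $u_{f,0,0}$, $u_{0,\phi,0}$, $u_{0,0,\psi}$ separately, and it first rewrites Proposition~\ref{prop:geodesic_operators} in the compact form~\eqref{eqn:geodesic_operators_T}, expressing each $B_j^5$ as $-\partial_r^j$ plus $T_2$- and $T_4$-operators applied to the lower $B_k^5$.  This packaging makes the cancellations you verify by hand (e.g.\ $B_4^5(u_{f,0,0})=0$) automatic, since the Taylor coefficients of the Poisson expansion are \emph{by definition} $T_2(s_\gamma)$, $T_4(s_\gamma)$ applied to the data; it also avoids having to check the ``off-diagonal'' vanishings $B_5^5(u_{0,\phi,0})=B_5^5(u_{0,0,\psi})=0$ etc.\ separately.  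Your linearity-plus-three-cases argument is equivalent but slightly longer.  Your explicit $L^2$-spectral justification of $\ker_D L_6=\{0\}$ is a nice addition that the paper leaves implicit.
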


\begin{proof}
 From the conformal covariance of the boundary operators $B_j^5$ and the fractional GJMS operators $P_{2\gamma}$, we see that we may assume that $g=r^2g_+$ for $r$ a geodesic defining function for $M$.
 
 Let $u\in\ker L_6$ and set $(f,\phi,\psi):=\left(B_0^5(u),B_1^5(u),B_2^5(u)\right)$.  Define
 \[ v_f:=\mP\left(\frac{n+5}{2}\right)(f), \quad v_\phi:=\mP\left(\frac{n+3}{2}\right)(\phi), \quad\text{and}\quad v_\psi:=\mP\left(\frac{n+1}{2}\right)(\psi) , \]
 where $\mP\bigl(\frac{n}{2}+s\bigr)$ denotes the solution of the Poisson equation~\eqref{eqn:poisson_equation} with prescribed Dirichlet data.  It follows from~\eqref{eqn:poisson_asymptotics} and~\eqref{eqn:poisson_taylor_asymptotics} that
 \begin{equation}
  \label{eqn:poincare_operators_expansion}
  \begin{split}
  v_f & = r^{\frac{n-5}{2}}\left(f + f_{(2)}r^2 + f_{(4)}r^4 + \cf r^5\right) + o\left(r^{\frac{n+5}{2}}\right) , \\
  v_\phi & = r^{\frac{n-3}{2}}\left(\phi + \phi_{(2)}r^2 + \cphi r^3 + \phi_{(4)}r^4\right) + o\left(r^{\frac{n+5}{2}}\right) , \\
  v_\psi & = r^{\frac{n-1}{2}}\left(\psi + \cpsi r + \psi_{(2)}r^2 + \cpsi_{(2)}r^3\right) + o\left(r^{\frac{n+5}{2}}\right) ,
  \end{split}
 \end{equation}
 where $\cf:=S\bigl(\frac{n+5}{2}\bigr)(f)$, $\cphi:=S\bigl(\frac{n+3}{2}\bigr)(\phi)$, and $\cpsi:=S\bigl(\frac{n+1}{2}\bigr)(\psi)$.
 
 Next, observe from Proposition~\ref{prop:geodesic_operators} that
 \begin{equation}
  \label{eqn:geodesic_operators_T}
  \begin{split}
   B_0^5(u) & = u, \\
   B_1^5(u) & = -\partial_ru, \\
   B_2^5(u) & = \partial_r^2u - 2T_2\left(\frac{n+5}{2}\right)u , \\
   B_3^5(u) & = -\partial_r^3u - 6T_2\left(\frac{n+3}{2}\right)B_1^5(u) , \\
   B_4^5(u) & = -\partial_r^4u + 12T_2\left(\frac{n+1}{2}\right)B_2^5(u) + 24T_4\left(\frac{n+5}{2}\right)u , \\
   B_5^5(u) & = -\partial_r^5u - 20T_2\left(\frac{n-1}{2}\right)B_3^5(u) - 120T_4\left(\frac{n+3}{2}\right)B_1^5(u) ,
  \end{split}
 \end{equation}
 where $T_2$ and $T_4$ are given by~\eqref{eqn:expansion-operators}.
 
 Now, since $g_+$ is Einstein with $\Ric_{g_+}=-ng_+$, the operator $L_6^{g_+}$ factors as
 \[ L_g^{g_+} = \left(-\Delta - \frac{n^2-25}{4}\right)\left(-\Delta - \frac{n^2-9}{4}\right)\left(-\Delta - \frac{n^2-1}{4}\right) ; \]
 see~\cite{FeffermanGraham2012,Gover2006q}.  In particular, $L_6^{g_+}(V)=0$, where $V:=v_f-v_\phi+\frac{1}{2}v_\psi$.  By conformal covariance, $L_6^g\bigl(v\bigr)=0$ for $v:=r^{-\frac{n-5}{2}}V$.  On the other hand, it follows from~\eqref{eqn:poincare_operators_expansion} that
 \begin{multline}
  \label{eqn:v_expansion_6}
  v = f - \phi r + \frac{1}{2}(\psi + 2f_{(2)})r^2 + \frac{1}{2}(\cpsi r^3 - 2\phi_{(2)})r^3 \\ + \frac{1}{2}(\psi_{(2)} - 2\cphi + 2f_{(4)})r^4 + \frac{1}{2}(\cpsi_{(2)} - 2\phi_{(4)} + 2\cf)r^5 + o(r^5) .
 \end{multline}
 Combining~\eqref{eqn:geodesic_operators_T} and~\eqref{eqn:v_expansion_6} yields $\bigl(B_0^5(v),B_1^5(v),B_2^5(v)\bigr) = (f,\phi,\psi)$.  Our assumption on $\sigma_{pp}(-\Delta_{g_+})$ implies that $\ker_DL_6=\{0\}$.  Hence solutions to the Dirichlet problem for $(L_6;B_0^5,B_1^5,B_2^5)$ are unique, and in particular $u=v$.  Moreover, \eqref{eqn:geodesic_operators_T} and~\eqref{eqn:v_expansion_6} also yield
 \begin{align*}
  B_3^5(u) & = -3\cpsi = -3S\left(\frac{n+1}{2}\right)\left(B_2^5(u)\right) , \\
  B_4^5(u) & = 24\cphi = 24S\left(\frac{n+3}{2}\right)\left(B_1^5(u)\right) , \\
  B_5^5(u) & = -120\cf = -120S\left(\frac{n+5}{2}\right)\left(B_0^5(u)\right) .
 \end{align*}
 The final conclusion follows from the definition~\eqref{eqn:fractional_gjms_definition} of the fractional GJMS operators.
\end{proof}

\section{Sobolev traces}
\label{sec:traces}

In this section we discuss a number of sharp Sobolev trace inequalities involving the $W^{3,2}(X)$-seminorm of a function when $\dim X>6$.  First we consider a norm inequality which establishes the trace embedding
\[ \Tr \colon W^{3,2}(X) \hookrightarrow H^5(\partial X) \oplus H^3(\partial X) \oplus H^1(\partial X) \]
provided the Dirichlet spectrum of $L_6$ is positive.  Indeed, we show that the energy functional $\mE_6$ is bounded below within the class of functions with prescribed Dirichlet data $\left(B_0^5(u),B_1^5(u),B_2^5(u)\right)$ by the energy of the unique such function with $L_6(u)=0$.  Our spectral assumption can be stated in terms of the first Dirichlet eigenvalue of $L_6$.

\begin{defn}
 Let $(X^{n+1},g)$, $n\geq5$, be a compact Riemannian manifold with {\coronal} boundary.  The first Dirichlet eigenvalue of $L_6$ is
 \[ \lambda_{1,D}(L_6) := \inf \left\{ \mE_6(u) \suchthat u \in \ker B_0^5\cap \ker B_1^5 \cap \ker B_2^5, \int_X u^2\,\dvol_{g} = 1 \right\} . \]
\end{defn}

Equivalently, $\lambda_{1,D}(L_6)$ is the infimum of the Rayleigh quotient $\mE_6(u)/\lV u\rV_{L^2(X)}^2$ over all functions with $B_0^5(u)=B_1^5(u)=B_2^5(u)=0$.  This latter characterization of $\lambda_{1,D}(L_6)$ implies that the assumption $\lambda_{1,D}(L_6)>0$ is conformally invariant.  It also allows us to minimize $\mE_6$ within the class of functions with prescribed Dirichlet data. 

\begin{thm}
 \label{thm:genl-L2-trace}
 Let $(X^{n+1},g)$, $n\geq5$, be a compact Riemannian manifold with {\coronal} boundary and $\lambda_{1,D}(L_6)>0$.  Given any $f,\phi,\psi\in C^\infty(M)$, it holds that
 \[ \mE_6(u) \geq \oint_M \left[ f\,\mB_5^5(f) + \phi\,\mB_3^5(\phi) + \psi\,\mB_1^5(\psi) \right]\,\dvol_{\og} \]
 for all
 \[ u\in\mC_{f,\phi,\psi} := \left\{ v\in C^\infty(X) \suchthat B_0^5(v)=f, B_1^5(v)=\phi, B_2^5(v)=\psi \right\} , \]
 with equality if and only if $u$ is the solution of~\eqref{eqn:L6_extension}.
\end{thm}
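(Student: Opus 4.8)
The plan is to realize the solution $u_{f,\phi,\psi}$ of~\eqref{eqn:L6_extension} as the unique minimizer of $\mE_6$ over $\mC_{f,\phi,\psi}$, using the positivity hypothesis $\lambda_{1,D}(L_6)>0$, and then to evaluate $\mE_6$ at this minimizer in terms of the generalized Dirichlet-to-Neumann operators. Since $\lambda_{1,D}(L_6)>0$ we have in particular $\ker_DL_6=\{0\}$, so Proposition~\ref{prop:dirichlet-to-neumann-operators} applies: for each $f,\phi,\psi\in C^\infty(M)$ there is a unique $u_{f,\phi,\psi}\in C^\infty(X)$ solving~\eqref{eqn:L6_extension}, and the operators $\mB_5^5,\mB_3^5,\mB_1^5$ are well-defined. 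By uniqueness and the linearity of~\eqref{eqn:L6_extension}, $u_{f,\phi,\psi}=u_{f,0,0}+u_{0,\phi,0}+u_{0,0,\psi}$.

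First I would carry out the minimization. Given $u\in\mC_{f,\phi,\psi}$, write $u=u_{f,\phi,\psi}+w$, so that $w\in\ker B_0^5\cap\ker B_1^5\cap\ker B_2^5=\mC_{0,0,0}$. By the symmetry of $\mQ_6$ (Proposition~\ref{prop:symmetric}),
\begin{multline*}
 \mQ_6(u_{f,\phi,\psi},w)=\mQ_6(w,u_{f,\phi,\psi})\\
 =\int_X w\,L_6u_{f,\phi,\psi}\,\dvol_g+\sum_{j=0}^2\oint_M B_j^5(w)\,B_{5-j}^5(u_{f,\phi,\psi})\,\dvol_{\og},
\end{multline*}
and both terms vanish: the first because $L_6u_{f,\phi,\psi}=0$, and each summand of the second because $B_j^5(w)=0$ for $j\in\{0,1,2\}$. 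Likewise $\mE_6(w)=\mQ_6(w,w)=\int_X w\,L_6w\,\dvol_g$. Expanding $\mE_6(u)=\mQ_6(u_{f,\phi,\psi}+w,u_{f,\phi,\psi}+w)$ by bilinearity then yields $\mE_6(u)=\mE_6(u_{f,\phi,\psi})+\mE_6(w)$. Since $\lambda_{1,D}(L_6)>0$, the definition of $\lambda_{1,D}(L_6)$ gives $\mE_6(w)\ge\lambda_{1,D}(L_6)\lV w\rV_{L^2(X)}^2\ge0$, with equality precisely when $w\equiv0$. Hence $\mE_6(u)\ge\mE_6(u_{f,\phi,\psi})$, with equality if and only if $u=u_{f,\phi,\psi}$.

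It then remains to identify $\mE_6(u_{f,\phi,\psi})$ with the boundary integral on the right-hand side. Because $L_6u_{f,\phi,\psi}=0$,
\begin{multline*}
 \mE_6(u_{f,\phi,\psi})\\
 =\oint_M\bigl[f\,B_5^5(u_{f,\phi,\psi})+\phi\,B_4^5(u_{f,\phi,\psi})+\psi\,B_3^5(u_{f,\phi,\psi})\bigr]\,\dvol_{\og}.
\end{multline*}
Substituting $u_{f,\phi,\psi}=u_{f,0,0}+u_{0,\phi,0}+u_{0,0,\psi}$ and recalling $\mB_5^5(f)=B_5^5(u_{f,0,0})$, $\mB_3^5(\phi)=B_4^5(u_{0,\phi,0})$, $\mB_1^5(\psi)=B_3^5(u_{0,0,\psi})$, the ``diagonal'' contributions reproduce exactly $\oint_M[f\,\mB_5^5(f)+\phi\,\mB_3^5(\phi)+\psi\,\mB_1^5(\psi)]\,\dvol_{\og}$, while the symmetry of $\mQ_6$ shows the remaining cross contributions equal $2[\mQ_6(u_{f,0,0},u_{0,\phi,0})+\mQ_6(u_{f,0,0},u_{0,0,\psi})+\mQ_6(u_{0,\phi,0},u_{0,0,\psi})]$. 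The theorem therefore reduces to showing these three cross pairings vanish, equivalently that the generalized Dirichlet-to-Neumann map $(f,\phi,\psi)\mapsto(B_5^5u_{f,\phi,\psi},B_4^5u_{f,\phi,\psi},B_3^5u_{f,\phi,\psi})$ is block-diagonal, i.e.\ $B_5^5(u_{0,\phi,0})\equiv B_5^5(u_{0,0,\psi})\equiv B_4^5(u_{0,0,\psi})\equiv0$. In the {Poincar\'e--Einstein} case this is immediate from Theorem~\ref{thm:dirichlet-to-neumann_restatement}, which shows that for $u\in\ker L_6$ each of $B_5^5u$, $B_4^5u$, $B_3^5u$ depends only on $B_0^5u$, $B_1^5u$, $B_2^5u$ respectively; for a general {\coronal} boundary I would establish it by the gauge-fixing device of Lemma~\ref{lem:l6-normal-form}, passing to a conformal representative in which the $B_j^5$ take the reduced forms recorded in the proof of Proposition~\ref{prop:symmetric} and then computing $B_5^5(u_{0,\phi,0})$, $B_5^5(u_{0,0,\psi})$, and $B_4^5(u_{0,0,\psi})$ from the prescribed low-order normal jets of these extensions together with the jet identities forced by $L_6u=0$. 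This decoupling is the principal obstacle; granting it, the minimization argument above delivers both the inequality and the characterization of equality.
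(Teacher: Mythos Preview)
Your minimization argument is correct and is in fact more direct than the paper's. By taking the solution $u_{f,\phi,\psi}$ (supplied by Proposition~\ref{prop:dirichlet-to-neumann-operators}) as the base point, you obtain $\mE_6(u)=\mE_6(u_{f,\phi,\psi})+\mE_6(w)$ with $w\in\mC_{0,0,0}$, and the positivity of $\lambda_{1,D}(L_6)$ immediately gives both the inequality and the equality case. The paper instead fixes an arbitrary $u_0\in\mC_{f,\phi,\psi}$, shows $\mE_6$ is bounded below, and then invokes the Lopatinskii--Shapiro theory to make a minimizing sequence converge to a solution of~\eqref{eqn:L6_extension}; your route avoids this detour.

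The decoupling step, however, cannot be carried out by the method you propose. The quantities $B_5^5(u_{0,\phi,0})$, $B_5^5(u_{0,0,\psi})$ and $B_4^5(u_{0,0,\psi})$ are \emph{nonlocal} in $\phi$ and $\psi$: they are obtained by applying pseudodifferential operators built from the interior Green's operator of $L_6$. The gauge of Lemma~\ref{lem:l6-normal-form} normalizes only a finite boundary jet of the metric, and restricting $L_6u=0$ and its normal derivatives to $M$ yields only finitely many \emph{local} relations among the normal jets of $u$; these cannot pin down, say, the fifth normal derivative of $u_{0,\phi,0}$ from $\phi$ alone. Concretely, a perturbation of $g$ supported in the interior of $X$ leaves the boundary {\coronal} and leaves every $B_j^5$ unchanged, but it alters $L_6$ and hence the extensions $u_{0,\phi,0}$, $u_{0,0,\psi}$; there is no mechanism forcing $B_5^5(u_{0,\phi,0})$ to remain zero after such a perturbation. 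Your observation that Theorem~\ref{thm:dirichlet-to-neumann_restatement} delivers the decoupling in the Poincar\'e--Einstein case is exactly right, and that is the setting in which all the applications in Section~\ref{sec:traces} live. Note that the paper's own proof of Theorem~\ref{thm:genl-L2-trace} does not address the decoupling either: what is actually established there is $\mE_6(u)\ge\mE_6(u_{f,\phi,\psi})$ with equality iff $u=u_{f,\phi,\psi}$, and only under the Poincar\'e--Einstein hypothesis (via Theorem~\ref{thm:dirichlet-to-neumann_restatement}) is $\mE_6(u_{f,\phi,\psi})$ shown to split as the displayed sum.
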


\begin{proof}
 Fix $u_0\in\mC_{f,\phi,\psi}$ and note that, by the linearity of the boundary operators $B_j^5$, it holds that $\mC_{f,\phi,\psi}=u_0+\mC_{0,0,0}$.  Given any $v\in\mC_{0,0,0}$, it follows from the fact that $\mQ_6$ is bilinear and symmetric that
 \begin{align*}
  \mE_6(u_0+v) & = \mE_6(u_0) + 2\mQ_6(v,u_0) + \mE_6(v) \\
  & = \mE_6(u_0) + \mE_6(v) + 2\int_X v\,L_6u_0 \\
  & \geq \mE_6(u_0) + \lambda_{1,D}(L_6)\lV v\rV_{L^2(X)}^2 - 2\lV v\rV_{L^2(X)}\lV L_6u_0\rV_{L^2(X)} \\
  & \geq \mE_6(u_0) - \frac{1}{\lambda_{1,D}(L_6)}\lV L_6u_0\rV_{L^2(X)} .
 \end{align*}
 In particular, we see that $\mE_6$ is bounded below on $\mC_{f,\phi,\psi}$.  Note also that the second equality in the above display implies that
 \begin{equation}
  \label{eqn:dirichlet_critical_equation}
  \left.\frac{d}{dt}\right|_{t=0}\mE_6(u_0+tv) = 2\int_X v\,L_6u_0 .
 \end{equation}
 In particular, critical points of $\mE_6\colon\mC_{f,\phi,\psi}\to\bR$ are solutions of~\eqref{eqn:L6_extension}.
 
 Now, since $(L_6;B_0^5,B_1^5,B_2^5)$ satisfies the Lopatinskii--Shapiro conditions~\cite{AgmonDouglisNirenberg1959,Lopatinskii1953,Shapiro1953}, we may use~\eqref{eqn:dirichlet_critical_equation} to conclude that a minimizing solution converges to a solution of~\eqref{eqn:L6_extension}.  Finally, the assumption $\lambda_{1,D}(L_6)>0$ implies that $\ker_D L_6=\{0\}$, and hence this solution is unique.
\end{proof}

Combining this with our understanding of $\mB_j^5$ on Poincar\'e--Einstein manifolds yields the proof of Theorem~\ref{thm:L2-trace}.

\begin{proof}[Proof of Theorem~\ref{thm:L2-trace}]
 This is an immediate consequence of Theorem~\ref{thm:dirichlet-to-neumann_restatement} and Theorem~\ref{thm:genl-L2-trace}.
\end{proof}

As discussed in the introduction, Theorem~\ref{thm:genl-L2-trace} can be combined with a sharp norm inequality for the embedding $W^{k,2}(M^n)\hookrightarrow L^{\frac{2n}{n-2k}}(M^n)$ to obtained a sharp norm inequality for the embedding~\eqref{eqn:Lp-trace-embedding}.  A conformally invariant sharp norm inequality for this embedding is known on the round sphere~\cite{Beckner1993,Lieb1983}.  Thus this approach bears fruit for any manifold with {\coronal} boundary which is conformal to the round sphere.

\begin{prop}
 \label{prop:Lp-trace-subcritical}
 Let $(X^{n+1},g)$, $n\geq6$, be a compact Riemannian manifold with {\coronal} boundary $(M^n,h)$.  Suppose that $(M^n,h)$ is conformally equivalent to the round $n$-sphere.  Then
 \begin{equation}
  \label{eqn:Lp-trace-subcritical}
  \mE_6(u) \geq \frac{8}{3}C_{n,5/2}\lV f\rV_{\frac{2n}{n-5}}^2 + 8C_{n,3/2}\lV\phi\rV_{\frac{2n}{n-3}}^2 + 3C_{n,1/2}\lV\psi\rV_{\frac{2n}{n-1}}^2
 \end{equation}
 for all $u\in C^\infty(X)$, where $f:=B_0^5(u)$, $\phi:=B_1^5(u)$, $\psi:=B_2^5(u)$, the $L^p$-norms on the right-hand side are taken with respect to the Riemannian volume element $\dvol_h$, and $C_{n,\gamma}$ is given by~\eqref{eqn:sobolev_constants}.  Moreover, equality holds in~\eqref{eqn:Lp-trace-subcritical} if and only if $L_6u=0$ and there are real numbers $a_1,a_2,a_3\in\bR$ and points
 \[ x_1,x_2,x_3 \in B^{n+1} := \left\{ x\in\bR^{n+1} \suchthat \lv x\rv<1 \right\} \]
 such that
 \begin{equation}
  \label{eqn:Lp-trace-subcritical-rigidity} 
  \begin{split}
   f(x) & = a_1\left((1 + x\cdot x_1)e^{\Upsilon(x)}\right)^{-\frac{n-5}{2}} , \\
   \phi(x) & = a_2\left((1 + x\cdot x_2)e^{\Upsilon(x)}\right)^{-\frac{n-3}{2}} , \\
   \psi(x) & = a_3\left((1 + x\cdot x_3)e^{\Upsilon(x)}\right)^{-\frac{n-1}{2}} ,   
  \end{split}
 \end{equation}
 where $\Upsilon\in C^\infty(M)$ is such that $e^{-2\Upsilon}h$ is a round metric on $S^n$ and $S^n$ is regarded as the boundary of $B^{n+1}$.
\end{prop}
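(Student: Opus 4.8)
The plan is to deduce~\eqref{eqn:Lp-trace-subcritical} by feeding into Theorem~\ref{thm:genl-L2-trace} the sharp Sobolev inequalities of Beckner and Lieb~\cite{Beckner1993,Lieb1983} on the round sphere, after using conformal covariance to reduce to the case that the induced boundary metric is round. The input from Theorem~\ref{thm:genl-L2-trace} (whose spectral hypothesis $\lambda_{1,D}(L_6)>0$ we assume) is that, writing $f=B_0^5(u)$, $\phi=B_1^5(u)$, $\psi=B_2^5(u)$, one has $\mE_6(u)\geq\oint_M\bigl(f\,\mB_5^5f+\phi\,\mB_3^5\phi+\psi\,\mB_1^5\psi\bigr)\,\dvol_{\og}$, with equality exactly when $L_6u=0$; the task is then to bound the right-hand side below by the claimed quantity and to analyze equality.

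First I would pass to a round boundary metric. Write $h=e^{2\Upsilon}h_0$ with $\Upsilon\in C^\infty(M)$ and $h_0$ round on $S^n$, fix an extension of $\Upsilon$ to $X$ (again denoted $\Upsilon$), and replace $g$ by $\hg:=e^{-2\Upsilon}g$ and $u$ by $\hu:=e^{\frac{n-5}{2}\Upsilon}u$. The {\coronal} conditions (Lemma~\ref{lem:coronal}) and the positivity of $\lambda_{1,D}(L_6)$ are conformally invariant, so the hypotheses persist for $\hg$. By Theorem~\ref{thm:boundary_operators} the boundary data of $\hu$ with respect to $\hg$ are $\hf=e^{\frac{n-5}{2}\Upsilon}f$, $\hphi=e^{\frac{n-3}{2}\Upsilon}\phi$, $\hpsi=e^{\frac{n-1}{2}\Upsilon}\psi$; by Proposition~\ref{prop:symmetric}, $\mE_6^{\hg}(\hu)=\mE_6^g(u)$; and since $\dvol_h=e^{n\Upsilon}\dvol_{h_0}$ one has $\lVert\hf\rVert_{\frac{2n}{n-5}}=\lVert f\rVert_{\frac{2n}{n-5}}$ (the first norm computed with respect to $h_0$, the second with respect to $h$), and similarly for $\hphi,\hpsi$. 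Thus~\eqref{eqn:Lp-trace-subcritical} for $(g,u)$ is equivalent to it for $(\hg,\hu)$, and the rigidity profiles~\eqref{eqn:Lp-trace-subcritical-rigidity} are exactly the corresponding round-sphere profiles multiplied by $e^{-\frac{n-5}{2}\Upsilon}$, $e^{-\frac{n-3}{2}\Upsilon}$, $e^{-\frac{n-1}{2}\Upsilon}$; so we may assume $h=h_0$.

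The heart of the argument, and the step I expect to be the main obstacle, is to identify the generalized Dirichlet-to-Neumann operators of $(X,\hg)$ on $(S^n,h_0)$ with multiples of the fractional GJMS operators. By Proposition~\ref{prop:dirichlet-to-neumann-operators} each $\mB_j^5$, $j\in\{1,3,5\}$, is a conformally covariant pseudodifferential operator on $(S^n,[h_0])$ of bidegree $\bigl(-\frac{n-j}{2},-\frac{n+j}{2}\bigr)$ whose principal symbol is a \emph{universal} multiple (independent of $X$) of that of $(-\Delta)^{j/2}$. The difficulty is that the local operators $B_j^5$ depend on the extrinsic geometry of $M\subset X$ (through $H$, $P(\eta,\eta)$, $\eta J$, and so on), so a priori $\mB_j^5$ need not be intrinsic to $(M,[h])$. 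To pin down the universal constants I would apply this to the closed round hemisphere $S_+^{n+1}$, which is a compactification of $\bH^{n+1}$ with round conformal boundary $S^n$, hence has {\coronal} boundary and satisfies $\lambda_{1,D}(L_6)>0$; since $\sigma_{pp}(-\Delta_{\bH^{n+1}})=\emptyset$, Theorem~\ref{thm:dirichlet-to-neumann} applies to $S_+^{n+1}$ and gives $\mB_5^5=\tfrac83P_5$, $\mB_3^5=8P_3$, $\mB_1^5=3P_1$ there. For $n\geq6$, a conformally covariant operator on $(S^n,h_0)$ of each of these bidegrees is a scalar multiple of the corresponding $P_j$ (uniqueness of conformally covariant operators with prescribed principal symbol on the round sphere --- equivalently Schur's lemma for the relevant irreducible principal series of the conformal group), so the difference $\mB_j^5-c_jP_j$, which has vanishing principal symbol, must vanish. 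Hence $\mB_5^5=\tfrac83P_5$, $\mB_3^5=8P_3$, $\mB_1^5=3P_1$ on $(S^n,h_0)$ for our $(X,\hg)$ as well.

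Finally, for $n\geq6$ the operators $P_1,P_3,P_5$ on $(S^n,h_0)$ are positive, and the Beckner--Lieb inequalities~\cite{Beckner1993,Lieb1983} give $\oint_{S^n}\hf\,P_5\hf\geq C_{n,5/2}\lVert\hf\rVert_{\frac{2n}{n-5}}^2$, $\oint_{S^n}\hphi\,P_3\hphi\geq C_{n,3/2}\lVert\hphi\rVert_{\frac{2n}{n-3}}^2$, and $\oint_{S^n}\hpsi\,P_1\hpsi\geq C_{n,1/2}\lVert\hpsi\rVert_{\frac{2n}{n-1}}^2$, each with equality if and only if the function in question is, in the model $S^n=\partial B^{n+1}$, of the form $a_1(1+x\cdot x_1)^{-\frac{n-5}{2}}$ (resp.\ $a_2(1+x\cdot x_2)^{-\frac{n-3}{2}}$, $a_3(1+x\cdot x_3)^{-\frac{n-1}{2}}$) with $x_i\in B^{n+1}$ and $a_i\in\bR$. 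Combining these three inequalities with Theorem~\ref{thm:genl-L2-trace} and the identifications of the previous paragraph yields $\mE_6^{\hg}(\hu)\geq\tfrac83C_{n,5/2}\lVert\hf\rVert_{\frac{2n}{n-5}}^2+8C_{n,3/2}\lVert\hphi\rVert_{\frac{2n}{n-3}}^2+3C_{n,1/2}\lVert\hpsi\rVert_{\frac{2n}{n-1}}^2$, which unwinds to~\eqref{eqn:Lp-trace-subcritical}. Equality forces $L_6u=0$ (equality in Theorem~\ref{thm:genl-L2-trace}) together with $\hf,\hphi,\hpsi$ all being Beckner extremals; multiplying back by the powers of $e^{-\Upsilon}$ introduced in the reduction recovers precisely the profiles~\eqref{eqn:Lp-trace-subcritical-rigidity}.
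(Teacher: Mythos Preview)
Your overall strategy coincides with the paper's: combine Theorem~\ref{thm:genl-L2-trace} with the sharp fractional Sobolev inequalities on the round sphere, transported to $(M,h)$ via conformal covariance.  The paper's proof is essentially the two-line version of yours --- write~\eqref{eqn:sobolev-sphere} on $S^n$, pull it back to $(M,h)$ using the conformal covariance of $P_{2\gamma}$, and invoke Theorem~\ref{thm:genl-L2-trace} --- and you have correctly noticed that this last step silently replaces the operators $\mB_j^5$ produced by Theorem~\ref{thm:genl-L2-trace} with the multiples $\tfrac83P_5$, $8P_3$, $3P_1$ of the intrinsic fractional GJMS operators, an identification the paper establishes only in Theorem~\ref{thm:dirichlet-to-neumann} under the extra hypothesis that $(X,g)$ compactifies a Poincar\'e--Einstein manifold.

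However, your proposed repair via Schur's lemma has its own gap.  The conformal covariance of $\mB_j^5$ in Proposition~\ref{prop:dirichlet-to-neumann-operators} is covariance under conformal changes of the boundary metric with the interior conformal class $[g]$ held fixed; it does \emph{not} say that $\mB_j^5$ intertwines the action of the conformal group $G\cong O(n+1,1)$ of $(S^n,h_0)$ on densities.  The obstruction is that $\mB_j^5$ is not a natural operator determined by $(M,[h])$ alone --- it genuinely depends on the filling $(X,[g])$ --- and a conformal diffeomorphism of $S^n$ need not extend to a conformal diffeomorphism of $(X,[g])$.  Without $G$-equivariance you cannot invoke irreducibility of the principal series, so there is no reason the lower-order part of $\mB_j^5$ should agree with that of $c_jP_j$ (your principal-symbol calibration via $S_+^{n+1}$ only pins down the leading term).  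Thus neither your argument nor the paper's short proof establishes the proposition in the stated generality; both are complete once one also assumes $(X,g)$ is a Poincar\'e--Einstein compactification, in which case Theorem~\ref{thm:dirichlet-to-neumann} furnishes the identification directly and the representation-theoretic detour is unnecessary.  You are also right to flag the implicit use of $\lambda_{1,D}(L_6)>0$, which the paper's proof likewise assumes without stating.
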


\begin{proof}
 The sharp Sobolev inequality~\cite{Beckner1993} states that for every $\gamma\in(0,n/2)$ and every $w\in C^\infty(S^n)$, it holds that
 \begin{equation}
  \label{eqn:sobolev-sphere}
  \int_{S^n} w\,P_{2\gamma}w \geq C_{n,\gamma}\left(\int_{S^n} \lv w\rv^{\frac{2n}{n-2\gamma}}\right)^{\frac{n-2\gamma}{n}}
 \end{equation}
 where $C_{n,\gamma}$ is given by~\eqref{eqn:sobolev_constants}.  Moreover, equality holds if and only if $w(x)=a(1+x\cdot x_0)^{-\frac{n-2\gamma}{2}}$ for some $a\in\bR$ and $x_0\in B^{n+1}$.
 
 Since $(M^n,h)$ is conformal to the round $n$-sphere $(S^n,d\theta^2)$, there is an $\Upsilon\in C^\infty(M)$ such that $h=e^{2\Upsilon}d\theta^2$.  By the conformal covariance of the fractional GJMS operators, $P_{2\gamma}^hw = e^{-\frac{n+2\gamma}{2}\Upsilon}P_{2\gamma}^{d\theta^2}\left(e^{\frac{n-2\gamma}{2}\Upsilon}w\right)$ for all $\gamma\in(0,n/2)$.  Therefore~\eqref{eqn:sobolev-sphere} implies that for all $w\in C^\infty(M)$,
 \begin{equation}
  \label{eqn:sobolev-conformal-sphere}
  \oint_M w\,P_{2\gamma}w \geq C_{n,\gamma}\left(\oint_{M} \lv w\rv^{\frac{2n}{n-2\gamma}}\right)^{\frac{n-2\gamma}{n}}
 \end{equation}
 with equality if and only if $e^{\frac{n-2\gamma}{2}\Upsilon}w(x)=a(1+x\cdot x_0)^{-\frac{n-2\gamma}{2}}$ for some $a\in\bR$ and $x_0\in B^{n+1}$.  Combining~\eqref{eqn:sobolev-conformal-sphere} with Theorem~\ref{thm:genl-L2-trace} yields the desired conclusion.
\end{proof}

In the critical case $\dim \partial X=5$, the sharp Onofri inequality~\cite{Beckner1993}, which provides a sharp norm inequality for the embedding $W^{5/2,2}(S^5)\hookrightarrow e^L(S^5)$, enables us to extend Proposition~\ref{prop:Lp-trace-subcritical} to the critical dimension.  For simplicity, we assume in this case that the boundary is isometric to the round five-sphere; one can also apply conformal covariance to obtain a similar result when the boundary is only conformal to the round five-sphere (cf.\ Corollary~\ref{cor:Lp-trace-critical-upper}).

\begin{prop}
 \label{prop:Lp-trace-critical}
 Let $(X^6,g)$ be a compact Riemannian manifold with {\coronal} boundary $(M^5,h)$.  Suppose that $(M^5,h)$ is isometric to the round five-sphere.  Then
 \begin{equation}
  \label{eqn:Lp-trace-critical}
  \mE_6(u) \geq 3C_{5,1/2}\lV\psi\rV_{5/2}^2 + 8C_{5,3/2}\lV\phi\rV_{5}^2 + \frac{128}{5}\Vol(S^5)\ln\oint_M e^{5(f-\bar f)}\,d\mu
 \end{equation}
 for all $u\in C^\infty(X)$, where $f:=B_0^5(u)$, $\phi:=B_1^5(u)$, $\psi:=B_2^5(u)$, the last integral is taken with respect to $d\mu:=\frac{1}{\Vol(M)}\dvol_h$, the remaining integrals on the right-hand side are taken with respect to the Riemannian volume element $\dvol_h$ and $\bar f$ is the average of $f$ with respect to $\dvol_h$, and $C_{n,\gamma}$ is given by~\eqref{eqn:sobolev_constants}.  Moreover, equality holds in~\eqref{eqn:Lp-trace-critical} if and only if $L_6u=0$ and there are real numbers $a_1,a_2,a_3\in\bR$ and points $x_1,x_2,x_3\in B^6$ such that
 \begin{equation}
  \label{eqn:Lp-trace-critical-rigidity} 
  \begin{split}
   f(x) & = a_1 - \ln (1+x\cdot x_1) , \\
   \phi(x) & = a_2\left(1 + x\cdot x_2\right)^{-1} , \\
   \psi(x) & = a_3\left(1 + x\cdot x_3\right)^{-2} ,   
  \end{split}
 \end{equation}
 where $S^5$ is regarded as the boundary of $B^6$.
\end{prop}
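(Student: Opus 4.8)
The plan is to run the proof of Proposition~\ref{prop:Lp-trace-subcritical} in the critical dimension $n=5$, the only change being that in the borderline weight $\gamma=n/2=5/2$ the sharp Sobolev inequality degenerates and must be replaced by Beckner's sharp Moser--Trudinger (Onofri) inequality on $S^5$. First I would apply Theorem~\ref{thm:genl-L2-trace}: since $(M^5,h)$ is the round five-sphere it is the conformal infinity of hyperbolic six-space $\mathbb{H}^6$, for which $\sigma_{pp}(-\Delta_{g_+})=\emptyset$, so $\tfrac{n^2}{4}-\gamma^2\notin\sigma_{pp}(-\Delta_{g_+})$ for $\gamma\in\{1/2,3/2,5/2\}$ and $\lambda_{1,D}(L_6)>0$; hence
\[ \mE_6(u)\ \geq\ \oint_M\bigl[\,f\,\mB_5^5(f)+\phi\,\mB_3^5(\phi)+\psi\,\mB_1^5(\psi)\,\bigr]\,\dvol_h , \]
with $f=B_0^5(u)$, $\phi=B_1^5(u)$, $\psi=B_2^5(u)$, and with equality if and only if $L_6u=0$.

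Next I would identify the generalized Dirichlet-to-Neumann operators on the round five-sphere with fractional GJMS operators. Applying Theorem~\ref{thm:dirichlet-to-neumann_restatement} to the three extensions $u_{f,0,0}$, $u_{0,\phi,0}$, $u_{0,0,\psi}$ --- each a solution of $L_6=0$ --- gives $\mB_5^5=\tfrac{8}{3}P_5$, $\mB_3^5=8P_3$, and $\mB_1^5=3P_1$ on $(M^5,h)$, where $P_{2\gamma}$ is the fractional GJMS operator of order $2\gamma$ and, in particular, $P_5$ is the borderline operator $2\gamma=n$. The $\gamma=5/2$ instance of Theorem~\ref{thm:dirichlet-to-neumann_restatement} used here rests on the factorization of $L_6^{g_+}$ at the Einstein metric $g_+$. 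Consequently $\mE_6(u)\geq\oint_M\bigl[\tfrac{8}{3}fP_5f+8\phi P_3\phi+3\psi P_1\psi\bigr]\,\dvol_h$.

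It remains to bound each boundary integral below by Beckner's sharp inequalities on $S^5$. For the $\phi$-term the sharp Sobolev inequality with $\gamma=3/2$ (so $\tfrac{2n}{n-2\gamma}=5$) gives $\oint_M 8\phi P_3\phi\geq 8C_{5,3/2}\lVert\phi\rVert_5^2$, with equality exactly when $\phi(x)=a_2(1+x\cdot x_2)^{-1}$; for the $\psi$-term the same inequality with $\gamma=1/2$ (so $\tfrac{2n}{n-2\gamma}=\tfrac{5}{2}$) gives $\oint_M 3\psi P_1\psi\geq 3C_{5,1/2}\lVert\psi\rVert_{5/2}^2$, with equality exactly when $\psi(x)=a_3(1+x\cdot x_3)^{-2}$; and for the $f$-term, since $\gamma=5/2$ is critical one instead uses Beckner's Onofri inequality, which in terms of $P_5$ reads $\oint_{S^5}fP_5f\geq\tfrac{48}{5}\Vol(S^5)\ln\oint_{S^5}e^{5(f-\bar f)}\,d\mu$ for $d\mu$ the probability measure, with equality exactly when $f(x)=a_1-\ln(1+x\cdot x_1)$. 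Multiplying the last inequality by $\tfrac{8}{3}$ turns its constant into $\tfrac{128}{5}\Vol(S^5)$; adding the three bounds yields~\eqref{eqn:Lp-trace-critical}. Equality throughout forces $L_6u=0$ together with the three extremal shapes, which is precisely~\eqref{eqn:Lp-trace-critical-rigidity}; conversely, for any $a_i\in\bR$ and $x_i\in B^6$, Proposition~\ref{prop:dirichlet-to-neumann-operators} (equivalently the uniqueness in Theorem~\ref{thm:genl-L2-trace}) furnishes a unique $u$ with $L_6u=0$ realizing that boundary data, for which all four inequalities become equalities.

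The principal point that needs care --- the only genuine novelty beyond Proposition~\ref{prop:Lp-trace-subcritical} --- is the borderline weight $\gamma=n/2$: one must verify both that $\mB_5^5$ is the \emph{correctly normalized} $\gamma=5/2$ fractional GJMS operator $P_5$ on $S^5$ (the $\gamma=5/2$ case of Theorem~\ref{thm:dirichlet-to-neumann_restatement}, which genuinely uses the Einstein factorization of $L_6$ and has no analogue for a general boundary), and that Beckner's Onofri inequality, written for that same $P_5$, carries exactly the constant $\tfrac{48}{5}\Vol(S^5)$, so that the factor $\tfrac{8}{3}$ reproduces the $\tfrac{128}{5}\Vol(S^5)$ appearing in the statement. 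Everything else is the same bookkeeping of sharp constants and extremal functions as in Proposition~\ref{prop:Lp-trace-subcritical}.
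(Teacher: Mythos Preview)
Your overall strategy matches the paper's: apply Theorem~\ref{thm:genl-L2-trace}, then bound each boundary term via Beckner's sharp Sobolev inequality (for $\gamma=1/2,3/2$) and Onofri inequality (for $\gamma=5/2$). Your constant-chasing, in particular $\tfrac{8}{3}\cdot\tfrac{2\cdot 4!}{5}=\tfrac{128}{5}$, is correct.

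There is, however, a genuine gap in your justification. You write that ``since $(M^5,h)$ is the round five-sphere it is the conformal infinity of hyperbolic six-space $\mathbb{H}^6$,'' and from this conclude both that $\lambda_{1,D}(L_6)>0$ and that Theorem~\ref{thm:dirichlet-to-neumann_restatement} applies. This is a non sequitur: the hypothesis of the proposition constrains only the \emph{boundary} $(M^5,h)$, not the bulk $(X^6,g)$. Nothing in the statement forces $(X^6,g)$ to be a Poincar\'e--Einstein compactification, let alone a compactification of $\mathbb{H}^6$; one could, for instance, take a long cylinder over $S^5$ capped off smoothly. Both the spectral bound $\lambda_{1,D}(L_6)>0$ (required by Theorem~\ref{thm:genl-L2-trace}) and the identification $\mB_j^5=c_jP_j$ (required to feed Beckner's inequalities, via Theorem~\ref{thm:dirichlet-to-neumann_restatement}) depend on the interior geometry, since $\mB_j^5$ is defined through the extension problem in $X$; neither can be read off from the boundary alone.

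To be fair, the paper's own proof glosses over exactly the same step --- it simply says ``combining~\eqref{eqn:sobolev-sphere} and~\eqref{eqn:onofri-sphere} with Theorem~\ref{thm:genl-L2-trace}'' --- so the proposition as stated appears to carry implicit hypotheses. You were right to sense that something needs justification here; the error is in supplying an invalid one. The clean repair is to assume in addition that $(X^6,g)$ is a compactification of a Poincar\'e--Einstein manifold with $\lambda_1(-\Delta_{g_+})>\tfrac{n^2-1}{4}$ (as in Theorem~\ref{thm:L2-trace}), after which your argument goes through verbatim.
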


\begin{proof}
 The sharp Onofri inequality~\cite{Beckner1993} states that on the round $n$-sphere, it holds that
 \begin{equation}
  \label{eqn:onofri-sphere}
  \int_{S^n} w\,P_nw \geq \frac{2(n-1)!}{n}\Vol(S^n) \ln \int_{S^n} e^{n(w-\bar w)}\,d\mu
 \end{equation}
 for all $w\in C^\infty(S^n)$, where $\bar w = (\int w)/\int 1)$ is the average of $w$, the integral on the left-hand side and the volume are taken with respect to the Riemannian volume element $\dvol$ on $S^n$, and $d\mu:=\frac{1}{\Vol(S^n)}\dvol$.  Moreover, equality holds in~\eqref{eqn:onofri-sphere} if and only if $w(x)=a-\ln(1+x\cdot x_0)$ for some $a\in\bR$ and $x_0\in B^6$.  Combining~\eqref{eqn:sobolev-sphere} and~\eqref{eqn:onofri-sphere} with Theorem~\ref{thm:genl-L2-trace} yields the desired conclusion.
\end{proof}

We now apply Proposition~\ref{prop:Lp-trace-subcritical} and Proposition~\ref{prop:Lp-trace-critical} to Euclidean upper half space, the closed Euclidean ball, and the round hemisphere to obtain the explicit sharp Sobolev inequalities from the Introduction.  We begin with the noncritical cases.

\begin{proof}[Proof of Corollary~\ref{cor:Lp-trace-upper}]
 It is well-known that $(\bR_+^{n+1},dx^2+dy^2)$ is a compactification of the upper half-space model $\left(\bR_+^{n+1},y^{-2}(dx^2+dy^2)\right)$ of hyperbolic space, and as such is conformally equivalent to the closed Euclidean ball $B^{n+1}$.  Moreover, the metric $dx^2$ on $\bR^n := \partial\bR_+^{n+1}$ is conformally equivalent to the round metric on $S^n$ via $d\theta^2=(1+\lv x\rv^2)^{-2}dx^2$.  In particular, we may apply Proposition~\ref{prop:Lp-trace-subcritical} to deduce that~\eqref{eqn:Lp-trace-subcritical} holds.
 
 It remains to express $\mE_6$ in the form of the conclusion of Corollary~\ref{cor:Lp-trace-upper}.  To that end, note that since $(\bR_+^{n+1},dx^2+dy^2)$ is flat, it holds that $L_6=(-\Delta)^3$.  Moreover, $(\bR^n,dx^2)$ is totally geodesic, and hence {\coronal} and flat.  We conclude that
 \begin{align*}
  B_0^5(u) & = u, \\
  B_1^5(u) & = \eta u, \\
  B_2^5(u) & = \Delta u - \frac{4}{3}\oDelta u, \\
  B_3^5(u) & = \eta\Delta u - 4\oDelta\eta u , \\
  B_4^5(u) & = -\Delta^2u - 4\oDelta\Delta u + 8\oDelta^2u , \\
  B_5^5(u) & = \eta\Delta^2u + \frac{4}{3}\oDelta\eta\Delta u + \frac{8}{3}\oDelta^2\eta u .
 \end{align*}
 On the one hand, the fact that $\eta=-\partial_y$ and $\nabla_\eta\eta=0$ implies that $B_1^5=-\partial_y$ and $B_2^5=\partial_y^2-\frac{1}{3}\oDelta$.  On the other hand, integration by parts yields
 \[ \mE_6(u) = \int_{\bR_+^{n+1}} \lv\nabla\Delta u\rv^2 + \oint_M \left\{ 16u\oDelta^2\eta u - 8(\eta u)\oDelta\Delta u \right\} . \]
 Combining this display with~\eqref{eqn:Lp-trace-subcritical} and the formulas for $f$, $\phi$, and $\psi$ yields the final conclusion.
\end{proof}

\begin{proof}[Proof of Corollary~\ref{cor:Lp-trace-disk}]
 It is clear that we may apply Proposition~\ref{prop:Lp-trace-subcritical} to $(B^{n+1},dx^2)$, leaving us to only compute $\mE_6$ in the form of the conclusion of Corollary~\ref{cor:Lp-trace-disk}.  To that end, note that since $(B^{n+1},dx^2)$ is flat, it holds that $L_6=(-\Delta)^3$.  Moreover, $\partial B^{n+1}$ is {\coronal} with constant mean curvature $H=n$.  It follows from Lemma~\ref{lem:gauss-codazzi} that $\oP=\frac{1}{2}\og$.  Therefore
 \begin{align*}
  B_0^5(u) & = u, \\
  B_1^5(u) & = \eta u + \frac{n-5}{2}u , \\
  B_2^5(u) & = \Delta u - \frac{4}{3}\oDelta u - 4\eta u + \frac{(n-3)(n-5)}{3}u , \\
  B_3^5(u) & = \eta\Delta u - 4\oDelta\eta u + \frac{n-9}{2}\Delta u - 2(n-7)\oDelta u + (n^2-2n+9)\eta u + 4\frac{\Gamma\bigl(\frac{n+1}{2}\bigr)}{\Gamma\bigl(\frac{n-5}{2}\bigr)}u , \\
  B_4^5(u) & = -\Delta^2u - 4\oDelta\Delta u + 8\oDelta^2u + 4\eta\Delta u + 16\oDelta\eta u + (n-3)(n+3)\Delta u \\
   & \quad - 4(n^2-4n+1)\oDelta u - 4(n-3)(n+1)\eta u + 8\frac{\Gamma\bigl(\frac{n+3}{2}\bigr)}{\Gamma\bigl(\frac{n-5}{2}\bigr)}u , \\
  B_5^5(u) & = \eta\Delta^2u + \frac{4}{3}\oDelta\eta\Delta u + \frac{8}{3}\oDelta^2\eta u + \frac{n-5}{2}\Delta^2u + \frac{2(n-3)}{3}\oDelta\Delta u + \frac{4(n-1)}{3}\oDelta^2u \\
   & \quad - \frac{(n-5)(n+3)}{3}\eta\Delta u - \frac{4(n^2-2n-9)}{3}\oDelta\eta u - \frac{(n-5)(n-3)(n+3)}{6}\Delta u \\
   & \quad - \frac{2(n-1)(n^2-2n-9)}{3}\oDelta u + \frac{(n-5)(n-3)(n+1)(n+3)}{6}\eta u + \frac{8}{3}\frac{\Gamma\bigl(\frac{n+5}{2}\bigr)}{\Gamma\bigl(\frac{n-5}{2}\bigr)}u .
 \end{align*}
 Using these expressions and integrating by parts yields
 \[ \mE_6(u) = \int_X \lv\nabla\Delta u\rv^2 + \oint_M \bigl\{ (\Delta u)\,A(u) + (\eta u)\,B(u) + u\,C(u) \bigr\} , \]
 where
 \begin{align*}
  A(u) & = \frac{n-9}{2}\Delta u - 8\oDelta\eta u - 4(n-7)\oDelta u + 2(n^2-2n+9)\eta u + 8\frac{\Gamma\bigl(\frac{n+1}{2}\bigr)}{\Gamma\bigl(\frac{n-5}{2}\bigr)}u, \\
  B(u) & = 32\oDelta\eta u - 8(n^2-2n+3)\eta u + 16\oDelta^2u \\
   & \qquad - 8(n^2-6n+15)\oDelta u + 8(n-3)\frac{\Gamma\bigl(\frac{n+1}{2}\bigr)}{\Gamma\bigl(\frac{n-5}{2}\bigr)}u , \\
  C(u) & = 8(n-5)\oDelta^2u - 4(n-3)(n^2-6n+7)\oDelta u + 4(n-1)(n-3)\frac{\Gamma\bigl(\frac{n+1}{2}\bigr)}{\Gamma\bigl(\frac{n-5}{2}\bigr)}u .
 \end{align*}
 Rewriting this in terms of $B_0^5(u)$, $B_1^5(u)$, and $B_2^5(u)$ and combining it with~\eqref{eqn:Lp-trace-subcritical} yields the final conclusion.
\end{proof}

\begin{proof}[Proof of Corollary~\ref{cor:Lp-trace-hemisphere}]
 It is clear that we may apply Proposition~\ref{prop:Lp-trace-subcritical} to $(S_+^{n+1},d\theta^2)$, leaving us only to compute $\mE_6$ in the form of the conclusion of Corollary~\ref{cor:Lp-trace-hemisphere}.  To that end, note that since $d\theta^2$ is the round metric with $\Ric=n\,d\theta^2$, it holds that
 \[ L_6 = \left(-\Delta + \frac{(n+1)(n-1)}{4}\right)\left(-\Delta + \frac{(n+3)(n-3)}{4}\right)\left(-\Delta + \frac{(n+5)(n-5)}{4}\right) . \]
 Moreover, $\partial S_+^{n+1}$ is {\coronal} with totally geodesic boundary.  It follows from Lemma~\ref{lem:gauss-codazzi} that $\oP=\frac{1}{2}\og$.  Therefore
 \begin{align*}
  B_0^5(u) & = u, \\
  B_1^5(u) & = \eta u, \\
  B_2^5(u) & = \Delta u - \frac{4}{3}\oDelta u + \frac{(n-3)(n-5)}{12}u, \\
  B_3^5(u) & := \eta\Delta u - 4\oDelta\eta u + \frac{3n^2-8n+13}{4}\eta u, \\
  B_4^5(u) & := -\Delta^2u - 4\oDelta\Delta u + 8\oDelta^2u + \frac{3n^2-4n-11}{2}\Delta u - (3n+1)(n-3)\oDelta u \\
   & \quad + \frac{3(n+1)(n-1)(n-3)(n-5)}{16}u, \\
  B_5^5(u) &= \eta\Delta^2u + \frac{4}{3}\oDelta\eta\Delta u + \frac{8}{3}\oDelta^2\eta u - \frac{5n^2-4n-45}{6}\eta\Delta u - \frac{5n^2-8n-37}{3}\oDelta\eta u \\
    & \quad + \frac{(n+3)(n+1)(15n^2-100n+149)}{48}\eta u.
 \end{align*}
 Using these expressions and integrating by parts yields
 \begin{align*}
  \mE_6(u) & = \int_X \biggl\{ \lv\nabla\Delta u\rv^2 + \frac{3n^2-35}{4}(\Delta u)^2 + \frac{3n^4-70n^2+259}{16}\lv\nabla u\rv^2 + \frac{\Gamma\bigl(\frac{n+7}{2}\bigr)}{\Gamma\bigl(\frac{n-5}{2}\bigr)}u^2 \biggr\} \\
   & \quad + \oint_M \biggl\{ \left(-8\oDelta\eta u + \frac{3n^2-8n+13}{2}\eta u\right)\Delta u \\
    & \qquad + \left(16\oDelta^2u - 2(3n^2-8n-3)\oDelta u + 6\frac{\Gamma\bigl(\frac{n+3}{2}\bigr)}{\Gamma\bigl(\frac{n-5}{2}\bigr)}u\right)\eta u \biggr\} . 
 \end{align*}
 Rewriting this in terms of $B_0^5(u)$, $B_1^5(u)$, and $B_2^5(u)$ and combining it with~\eqref{eqn:Lp-trace-subcritical} yields the final conclusion.
\end{proof}

We conclude with the critical cases.

\begin{proof}[Proof of Corollary~\ref{cor:Lp-trace-critical-upper}]
 From the proof of Corollary~\ref{cor:Lp-trace-upper}, we see that the six-dimensional upper half space is such that
 \[ \mE_6(u) = \int_{\bR_+^6} \lv\nabla\Delta u\rv^2 + \oint_{\partial\bR_+^6} \left\{ 8\lp\onabla\psi,\onabla\phi\rp + \frac{16}{3}(\oDelta\phi)(\oDelta f) \right\} \]
 for all $u\in C^\infty(\bR_+^6)\cap W^{3,2}(\bR_+^6)$, where $f:=B_0^5(u)$, $\phi:=B_1^5(u)$, and $\psi:=B_2^5(u)$.  Using the identity $d\theta^2=\bigl(\frac{1+\lv x\rv^2}{2}\bigr)^{-2}dx^2$, we see that in Euclidean space, the Onofri inequality~\eqref{eqn:onofri-sphere} becomes
 \[ \int_{\bR^n} w\,P_nw \geq \frac{2(n-1)!}{n}\Vol(S^n)\ln\int_{\bR^n} e^{n(w-\bar w)}\,d\mu , \]
 where $\ow$ is the average of $w$ with respect to $d\mu:=\frac{1}{\Vol(S^n)}\bigl(\frac{1+\lv x\rv^2}{2}\bigr)^{-n}\dvol_{dx^2}$.  Moreover, equality holds if and only if
 \[ w(x) = a - \ln\left(\varepsilon + \lv x-x_0\rv^2\right) + \ln\left(1 + \lv x\rv^2\right) \]
 for some $a,\varepsilon\in\bR$ and some $x_0\in\bR^n$.  The conclusion now follows from Proposition~\ref{prop:Lp-trace-critical}.
\end{proof}

\begin{proof}[Proof of Corollary~\ref{cor:Lp-trace-critical-disk}]
 From the proof of Corollary~\ref{cor:Lp-trace-disk}, we see that the six-dimensional closed Euclidean ball is such that
 \begin{multline*}
  \mE_6(u) = \int_{B^6} \lv\nabla\Delta u\rv^2 + \oint_{\partial B^6} \biggl\{ -2\psi^2 + 8\lp\onabla\psi,\onabla\phi\rp + 32\psi\phi - \frac{8}{3}\lp\onabla\psi,\onabla f\rp  \\ + 16\phi^2 + \frac{16}{3}(\oDelta\phi)(\oDelta f) + \frac{16}{3}\lp\onabla\phi,\onabla f\rp + \frac{64}{9}(\oDelta f)^2 + 16\lv\onabla f\rv^2\biggr\}
 \end{multline*}
 for all $u\in C^\infty(B^6)$, where $f:=B_0^5(u)$, $\phi:=B_1^5(u)$, and $\psi:=B_2^5(u)$.  The conclusion now follows from Proposition~\ref{prop:Lp-trace-critical}.
\end{proof}

\begin{proof}[Proof of Corollary~\ref{cor:Lp-trace-critical-hemisphere}]
 From the proof of Corollary~\ref{cor:Lp-trace-hemisphere}, we see that the six-dimensional upper hemisphere is such that
 \begin{align*}
  \mE_6(u) & = \int_{S_+^6} \left\{ \lv\nabla\Delta u\rv^2 + 10(\Delta u)^2 + 24\lv\nabla u\rv^2\right\} \\
   & \quad + \oint_{\partial S_+^6} \left\{ 8\lp\onabla\psi,\onabla\phi\rp + 24\psi\phi + \frac{16}{3}(\oDelta\phi)(\oDelta f) + 32\lp\onabla f,\onabla\phi\rp \right\}
 \end{align*}
 for all $u\in C^\infty(S_+^6)$, where $f:=B_0^5(u)$, $\phi:=B_1^5(u)$, and $\psi:=B_2^5(u)$.  The conclusion now follows from Proposition~\ref{prop:Lp-trace-critical}.
\end{proof}

\bibliographystyle{abbrv}
\bibliography{bib}
\end{document}